\documentclass[11pt,reqno]{amsart}
\usepackage{amsfonts,amssymb,amsmath,amsopn,amsthm,graphicx}
\usepackage{amsxtra, mathrsfs}
\usepackage{colordvi}
\usepackage[usenames,dvipsnames]{color}
\usepackage{amsfonts,amssymb,amsbsy,amsmath,amsthm,dsfont}

\usepackage{amsthm}
\usepackage{amsbsy}
\usepackage{amstext}
\usepackage{amsfonts}
\usepackage{mathcomp}
\usepackage{mathtools}
\usepackage{dsfont}
\usepackage{tabularx}
\usepackage{latexsym}
\usepackage{amssymb}
\usepackage{esint}
\usepackage[colorlinks,pdfpagelabels,pdfstartview = FitH,bookmarksopen = true,bookmarksnumbered = true,linkcolor = blue,
plainpages = false,hypertexnames = false,citecolor = red,pagebackref=false]{hyperref}
\usepackage{cite}


\setlength{\voffset}{-.7truein}
\setlength{\textheight}{8.9truein}
\setlength{\textwidth}{6.3truein}
\setlength{\hoffset}{-.7truein}

\usepackage{lipsum}
\usepackage[many]{tcolorbox}
\usetikzlibrary{decorations.pathreplacing}
\linespread{1.1} \numberwithin{equation}{section}

\newtcolorbox{leftbrace}{ %
	enhanced jigsaw, 
	breakable, 
	frame hidden, 
	overlay={%
		\draw [
		decoration={brace,amplitude=0.5em},
		decorate,
		ultra thick,
		]
		(frame.south west)--(frame.north west);
	},
	parbox=false,
}

\newtheorem{theorem}{Theorem}[section]

\newtheorem{cor}[theorem]{Corollary}

\newtheorem{lem}[theorem]{Lemma}
\newtheorem{conjecture}[theorem]{Conjecture}
\newtheorem{lemma}[theorem]{Lemma}

\newtheorem{example}[theorem]{Example}
\newtheorem{assumptions}[theorem]{Assumptions}
\newtheorem{definition}[theorem]{Definition}

\newtheorem{remark}[theorem]{Remark}

\newtheorem{rem}[theorem]{Remark}
\newtheorem{theo}[theorem]{Theorem}
\newtheorem{prop}[theorem]{Proposition}
\newtheorem{remarks}[theorem]{Remarks}



\newtheorem{defin}[theorem]{Definition}


\renewcommand{\div}{\operatorname{div}}

\newcommand{\N}{\ensuremath{\mathbb{N}}}
\newcommand{\R}{\ensuremath{\mathbb{R}}}

\newcommand{\C}{\mathrm{C}}

\newcommand{\W}{\mathrm{W}}

\def\Xint#1{\mathchoice
    {\XXint\displaystyle\textstyle{#1}}%
    {\XXint\textstyle\scriptstyle{#1}}%
    {\XXint\scriptstyle\scriptscriptstyle{#1}}%
    {\XXint\scriptscriptstyle\scriptscriptstyle{#1}}%
    \!\int}
\def\XXint#1#2#3{\setbox0=\hbox{$#1{#2#3}{\int}$}
    \vcenter{\hbox{$#2#3$}}\kern-0.5\wd0}

\def\dashint{\Xint{\raise4pt\hbox to7pt{\hrulefill}}}

\def\XXiint#1#2#3{\setbox0=\hbox{$#1{#2#3}{\iint}$}
    \vcenter{\hbox{$#2#3$}}\kern-0.5\wd0}

\renewcommand{\epsilon}{\varepsilon}

\renewcommand{\rho}{\varrho}

\renewcommand{\epsilon}{\varepsilon}
\renewcommand{\rho}{\varrho}
\renewcommand{\d}{\:\! \mathrm{d}}

\DeclareMathOperator{\loc}{loc}

\numberwithin{equation}{section}
\allowdisplaybreaks
\newcommand{\diver}{\mathrm{div}\,}
\newcommand{\RN}[1]{%
	\textup{\uppercase\expandafter{\romannumeral#1}}%
}

\newcommand{\pd}{\partial}

\newcommand{\dx}{\,\mathrm{d}x}

\newcommand{\dS}{\,\mathrm{d}S}


\newcommand{\be}{\begin{equation}}
\newcommand{\ee}{\end{equation}}
\newcommand{\bea}{\begin{eqnarray}}
\newcommand{\eea}{\end{eqnarray}}
\newcommand{\bean}{\begin{eqnarray*}}
	\newcommand{\eean}{\end{eqnarray*}}


\newlength{\wex}  \newlength{\hex}
%

\newcommand{\ass}[1]{Let Assumptions~\ref{assump1} hold  in a bounded Lipschitz domain $\Gw$}

\def\ga{\alpha}            
       \def\gd{\delta}      
                         \def\vge{\varepsilon}
\def\gf{\phi}       \def\vgf{\varphi}    \def\gh{\eta}
            \def\gl{\lambda}
\def\gm{\mu}                 
    \def\gr{\rho}        
       
      \def\gw{\omega}
                \def\gz{\zeta}

\def\Gw{\Omega}              

\begin{document}
\renewcommand{\refname}{References} 
\renewcommand{\abstractname}{Abstract} 
\title[Fuchsian potential in Morrey space]{Positive Liouville theorem and asymptotic behaviour for $(p,A)$-Laplacian type elliptic equations with Fuchsian potentials in Morrey space}
\author[R.Kr. Giri]{Ratan Kr. Giri}
\address{Ratan Kr. Giri\\
	Department of Mathematics, Technion - Israel Institute of Technology\\
	3200 Haifa, Israel}
\email{giri@campus.technion.ac.il/giri90ratan@gmail.com}
\author[Y.~Pinchover]{Yehuda Pinchover}
\address{Yehuda Pinchover\\
	Department of Mathematics, Technion - Israel Institute of Technology\\
	3200 Haifa, Israel}
\email{pincho@technion.ac.il}
\dedicatory{Dedicated to Volodya Maz'ya on the occasion of his 80th birthday}	
\subjclass[2010]{Primary 35B53; Secondary 35B09, 35J62, 35B40}
\keywords{Fuchsian Singularity, Morrey spaces, Liouville theorem,  $(p,A)$-Laplacian}
\begin{abstract}
We study Liouville-type theorems and the asymptotic behaviour of positive solutions near an isolated singular point $\zeta\in\partial\Omega\cup\{\infty\}$ of the quasilinear elliptic equations 
$$-\text{div}(|\nabla u|_A^{p-2}A\nabla u)+V|u|^{p-2}u =0\quad\text{in } \Omega\setminus\{\zeta\},$$
 where $\Omega$ is a domain in $\mathbb{R}^d$ ($d\geq 2$), and $A=(a_{ij})\in L_{\rm loc}^{\infty}(\Gw;\R^{d\times d})$ is a symmetric and locally uniformly positive definite matrix. The potential $V$ lies in a certain local Morrey space (depending on $p$) and has a Fuchsian-type isolated singularity at $\zeta$.
\end{abstract}
%
\maketitle
\section{Introduction}
Let $\Omega$ be a domain in $\mathbb{R}^d$, $d\geq 2$, and consider the quasilinear elliptic partial differential equation
	\vspace{-3mm} 
\begin{equation}\label{p-laplace equ_1}
Q(u)=Q_{p,A,V}(u) := -\Delta_{p,A}(u)+V|u|^{p-2}u =0\qquad \text{in } \Omega.
\end{equation}
and let $\gz\in \{0,\infty\}$ be a fixed isolated singular point of $Q_{p,A,V}$ which belongs to the ideal boundary of $\Omega$ (to be explained in the sequel).

Here  $1<p<\infty$, $V$ is a real valued potential belonging to a certain local Morrey space, and  
$$\Delta_{p,A}(u):=\text{div}(|\nabla u|_A^{p-2} A\nabla u)$$ is the $(p,A)$-Laplacian, where $A=(a_{ij})\in L_{\rm loc}^{\infty}(\Gw;\R^{d\times d})$ is a symmetric and locally uniformly positive definite matrix, and
\be\nonumber
|\xi|_{A(x)}^2
:=
A(x)\xi\cdot\xi=\sum_{i,j=1}^d a_{ij}(x)\xi_i\xi_j
\qquad x\in\Gw\mbox{ and }\xi=(\xi_1,...,\xi_d)\in\R^d.
\ee
We note that \eqref{p-laplace equ_1} is the Euler-Lagrange equation associated to the energy functional 
\begin{equation}\label{eqn1}
\mathcal{Q}(\vgf)=\mathcal{Q}_{p,A,V}(\vgf):=\int_\Omega (|\nabla \vgf|^p_A+V|\vgf|^p)\d x \qquad\vgf\in C_c^\infty(\Omega).
\end{equation}

The quasilinear equation \eqref{p-laplace equ_1} satisfies the homogeneity property of linear equations but not the additivity (therefore, such an equation is sometimes called {\em half-linear} or quasilinear elliptic equations with {\em natural growth terms}). Consequently, one expects that positive solutions of 
\eqref{p-laplace equ_1} would share some properties of positive solutions of linear elliptic equations. Indeed, criticality theory for \eqref{p-laplace equ_1}, similar to the linear case, was established in \cite{PT,PR,pinchover_psaradakis}.

  In \cite{frass_pinchover}, Frass and Pinchover studied Liouville theorems and removable singularity theorems for positive {\em classical} solutions of \eqref{p-laplace equ_1} under the assumptions that $A$ is the {\em identity matrix}, $V\in L^\infty_{\loc}(\Omega)$, and $V$ has a {\em pointwise} Fuchsian-type singularity near $\gz\in \{0,\infty\}$, namely, 
\begin{equation}\label{eq_pFs}
|V(x)|\leq \frac{C}{|x|^p}\qquad \mbox{near } \gz.
\end{equation} 
Moreover, in the same paper and  in \cite{frass_pinchover2}, the asymptotic behavior of the {\it quotient} of two positive solutions near the singular point $\zeta$ has been obtained. The results in \cite{frass_pinchover,frass_pinchover2} extend the results obtained in \cite[and the references therein]{pinchover} for second-order {\em linear} elliptic operators (not necessarily symmetric)  to the quasilinear case. We note that an affirmative answer to Problem~51 of Maz'ya's recent paper \cite{maz} follows from  \cite[Theorem~1.1]{frass_pinchover}. 

\medskip

The aim of the present paper is to study Liouville-type theorems, Picard-type principles, and removable singularity theorems for positive {\em weak} solutions of \eqref{p-laplace equ_1}, by relaxing significantly the condition on the potential $V\in L^\infty_{\loc}(\Omega)$. More precisely, we enable a symmetric, locally bounded, and locally uniformly positive definite matrix $A$, and a potential $V$ that lies in a certain local Morrey space and has a generalized  Fuchsian-type singularity  at $\gz$ (in term of a weighted Morrey norm of $V$). In fact, our local regularity assumptions on $A$ and $V$ are almost the weakest to keep the validity of the local Harnack inequality and the local H\"older continuity of weak solutions. 

\medskip

The outline of the present paper is as follows. In Section~\ref{sec_pre} we provide a short summary on the local theory of positive solutions of \eqref{p-laplace equ_1} with potentials in local Morrey spaces and prove Harnack convergence principle under minimal assumptions on the  sort of convergence of the coefficients of the sequence of operators.  In Section~\ref{sec_fuchs} we introduce the notion of a (generalized) Fuchsian singularity for the operator $Q$ at a point $\gz$, and prove a uniform Harnack inequality near such a singular point which is a key result for proving (under further assumptions) that  the quotient of two positive solutions near $\gz$ admits a limit in the wide sense. Section~\ref{sec_pA_harmonic} is devoted to the asymptotic behaviour of positive $(p,\mathbb A)$-harmonic functions near an isolated singular point for the case where $\mathbb A\in \R^{d\times d}$ is a symmetric and positive definite matrix. In Section~\ref{sec_weak_Fuchs} we assume that $Q$ has a {\em weak} Fuchsian singularity at $\gz$ and prove that it is a sufficient condition for the validity of a positive Liouville-type theorem. Finally, Section~\ref{sec_spher} is devoted to the  study of Liouville-type theorem in the elliptically symmetric case.

\section{Preliminaries}\label{sec_pre}
We begin with notation, some definitions and assumptions. Throughout the paper, $\Omega$ is a domain (i.~e., a nonempty open connected set) in $\mathbb{R}^d$, $d\geq 2$. By $B_r(x_0)$ and $S_r(x_0)= \partial B_r(x_0)$, we denote the open ball and the sphere of radius $r>0$ centered at $x_0$, respectively, and we set $B_r:=B_r(0), S_r:=S_r(0)$.  Denote $B_r^*:=\mathbb{R}^d\setminus\overline{B_r}$ and $(\R^d)^*:=\R^d\setminus \{0\}$, the corresponding exterior domains. For $R>0$ we denote by $\mathcal{A}_R$  the annuls
$\mathcal{A}_R:=\{x\in\mathbb{R}^d \mid  {R}/{2}\leq |x|< {3R}/{2} \},$  
and for a domain $\Omega\subset \mathbb{R}^d$ and $R>0$, we define the dilated domain 
$\Omega/R:= \{x\in \mathbb{R}^d\,|\, x=R^{-1}y,\, \text{where}\,y\in\Omega\}.$
Let $f,g\in C(\Omega)$ be two positive functions. The notation $f\asymp g$ in $\Omega$ means that there exists positive constant $C$ such that 
$$C^{-1}g(x)\leq f(x)\leq C g(x)\quad \text{ for all } \,x\in \Omega.$$
We write $\Omega_1\Subset\Omega_2$ if $\Omega_2$ is open and $\overline{\Omega}_1$ is compact (proper) subset of $\Omega_2$. By a {\em compact exhaustion} of a domain $\Gw$,  we mean a sequence of smooth relatively
compact domains in $\Gw$ such that $\Gw_1 \neq \emptyset$,  $\Gw_i \Subset \Gw_{i + 1}$, and 
$\cup_{i = 1}^{\infty} \Gw_i = \Gw$.  
Finally, throughout the paper $C$ refers to a positive constant which may vary from  line to line.

\medskip

We first introduce a certain class of Morrey spaces, in which the potential $V$ of the operator $Q_{p,A,V}$ belongs to.
\begin{defin}[Morrey spaces]{\em 
	A function $f\in L^1_{\loc}(\Omega;\R)$ is said to belong to the local Morrey space $M^q_{\loc}(\Omega;\R)$, $q\in[1, \infty]$ if for any $\omega\Subset\Omega$
	$$\|f\|_{M^q(\omega)}:= \sup_{\substack{y\in \omega\\0<r<\text{diam}(\omega)}}\frac{1}{r^{d/q'}}\int_{\omega\cap B_r(y)}|f|\dx<\infty,$$
	where $q'=q/(q-1)$ is the H\"older conjugate exponent of $q$. By applying H\"older inequality, it can be seen that $L^q_{\loc}(\Omega)\subsetneq M^q_{\loc}(\Omega)\subsetneq L^1_{\loc}(\Omega)$ for any $q\in(1, \infty)$. For $q=1$ we have $M^1_{\loc}(\Omega)=L^1_{\loc}(\Omega)$, and for $q=\infty$  we have $M^\infty_{\loc}(\Omega)= L^\infty_{\loc}(\Omega)$ (as vector spaces).
}
\end{defin}
Next we define a special local Morrey space $M^q_{\loc}(p;\Omega)$ which depends on the underlying exponent $1<p<\infty$. 
\begin{defin}[Special Morrey spaces]\label{morrey_space}{\em 
	For $p\neq d$, we define
	$$M^q_{\loc}(p;\Omega):=\begin{cases}
	M^q_{\loc}(\Omega) \mbox{ with } q>d/p &\mbox{ if } p<d,\\
	L^1_{\loc}(\Omega)  & \mbox{ if } p>d,
	\end{cases}$$
	while for $p=d$, the Morrey space $M^q_{\loc}(d;\Omega)$ consists of all those $f$ such that for some $q>d$ and any $\omega\Subset\Omega$
		\vspace{-3mm} 
	$$ \|f\|_{M^q(d;\omega)}:= \sup_{\substack{y\in \omega\\0<r<\text{diam}(\omega)}}\varphi_q(r)\int_{\omega\cap B_r(y)}|f|\dx<\infty,$$
	where $\varphi_q(r):=\log ^{q/d'}\left(\frac{\text{diam}(\omega)}{r}\right)$ (see \cite[Theorem~1.94]{maly_ziemer}, and references therein).
}
\end{defin}
Throughout the article we consider the following $(p,A)$-Laplace type equation 
\begin{equation}\label{p_laplace equ_2}
Q(u)=Q_{p,A,V}(u) := -\Delta_{p,A}(u)+V|u|^{p-2}u =0\qquad \text{in } \Omega,
\end{equation}
under the following assumptions on $A$ and $V$.
\begin{assumptions} \label{assump} 
		\begin{leftbrace}
		\begin{itemize}
			\item[{\ }]		
			\item $A = (a_{ij})_{i,j=1}^{d} \in L_{\rm loc}^{\infty}(\Gw;\R^{d\times d})$ is a symmetric matrix.
			 \item $A$ is locally uniformly elliptic  in $\Gw$, that is, for any compact $K\subset \Gw$ there exists  $\Theta_K>0$ such that 
			\begin{eqnarray*} 
			\hspace*{2.5cm}\Theta_K^{-1}\sum_{i=1}^d\xi_i^2\leq\sum_{i,j=1}^d
			a_{ij}(x)\xi_i\xi_j\leq \Theta_K\sum_{i=1}^d\xi_i^2 \quad \forall \xi\in \mathbb{R}^n \mbox{ and } \forall x\in K.
			\end{eqnarray*}
			\item  $V\in M_{\loc}^q(p;\Gw)$ is a real valued function.
		\end{itemize}
	\end{leftbrace}
\end{assumptions}
Recall that a function $v$ is said to be a {\em (weak) solution} of the equation $Q_{p,A,V}(u)=0$ in $\Gw$  if  $v\in W_{\loc}^{1,p}(\Omega)$ and $v$ satisfies  
\begin{equation}\label{eq-ws}
\int_\Omega(\nabla v|^{p-2}_A A\nabla v\cdot \nabla \varphi+ V|v|^{p-2}v\varphi)\d x=0\qquad \forall \varphi\in C_c^\infty(\Omega).
\end{equation}
Further, we say the $v\in W_{\loc}^{1,p}(\Omega)$ is a {\em supersolution} of 
\eqref{p_laplace equ_2} if the integral in \eqref{eq-ws} is nonnegative for every nonnegative test function $\varphi\in \C_c^\infty(\Omega)$. A function $v$ is a {\em subsolution} of  \eqref{p_laplace equ_2} if $-v$ is supersolution of \eqref{p_laplace equ_2}. It should be noted that the above definitions make sense because of the following Morrey-Adams Theorem (see for example,  \cite[Theorem 2.4]{pinchover_psaradakis} and references therein).
\begin{theorem}[Morrey-Adams theorem]\label{theorem:uncertainty}
	Let $\gw\Subset\R^d$ and $V\in M^q(p;\gw)$.
	
	$(i)$ There exists a constant $C=C(d,p,q)>0$ such that for any $\delta>0$ and all $u\in\W_0^{1,p}(\gw)$
	\be\label{ineq:uncertainty}
	\int_\gw|V||u|^p\dx
	\leq
	\delta\|\nabla u\|^p_{L^p(\gw;\R^d)}
	+
	\frac{C}{\delta^{d/(pq-d)}}\|V\|_{M^q(p;\gw)}^{pq/(pq-d)}\|u\|^p_{L^p(\gw)}.
	\ee
	
	$(ii)$ For any $\gw'\Subset\gw$ with Lipschitz boundary, there exist $0<C=C(d,p,q,\gw',\gw,\gd,\|V\|_{M^q(p;\gw)})$ and $\delta_0$ such that for $0<\delta\leq \delta_0$ and all $u\in W^{1,p}(\gw')$
	\be\nonumber
	\int_{\gw'}|V||u|^p\dx
	\leq
	\delta\|\nabla u\|^p_{L^p(\gw';\R^d)}
	+
	C\|u\|^p_{L^p(\gw')}.
	\ee
\end{theorem}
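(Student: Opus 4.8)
The plan is to prove part $(i)$ directly by a dyadic/covering argument and then obtain part $(ii)$ as a localized consequence via a cutoff. For part $(i)$, the key is to control $\int_\omega |V||u|^p$ by splitting the domain into small cubes (or balls) of a scale $r$ to be optimized at the end. On each such piece $Q$ of side length $r$, I would estimate $\int_{Q}|V||u|^p$ by pulling out the oscillation of $|u|^p$ and using the Morrey bound on $V$: writing $u_Q$ for the average of $u$ over $Q$, one has $\int_Q|V||u|^p\le C\int_Q|V|\,|u-u_Q|^p + C|u_Q|^p\int_Q|V|$. The first term is handled by Sobolev–Poincaré on $Q$ together with the definition of $\|V\|_{M^q(p;\omega)}$: since $q>d/p$ when $p<d$, Hölder's inequality in the form $\int_Q|V|\,|u-u_Q|^p\le \|V\|_{L^{q}(Q)}\|u-u_Q\|_{L^{pq'}(Q)}^p$ combined with the Morrey interpolation inequality $\|f\|_{L^q(Q)}\lesssim r^{-d/q'+d/q}\|f\|_{M^q}$... more cleanly, one uses that $r^{-d/q'}\int_{Q}|V|\le\|V\|_{M^q(p;\omega)}$ directly, after a further Hölder step that trades integrability of $|u-u_Q|^p$ against the scaling weight. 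The upshot is a bound of the shape $\int_Q|V||u|^p\le C r^{p-d/q'\cdot(\text{something})}\|V\|_{M^q}\,r^{?}\int_Q|\nabla u|^p + (\text{lower order})$, and the exponent of $r$ on the gradient term is positive precisely because $pq>d$; summing over the covering gives $\int_\omega|V||u|^p\le C\,r^{\theta}\|V\|_{M^q}\|\nabla u\|_{L^p(\omega)}^p + C\,r^{-\sigma}\|V\|_{M^q}\|u\|_{L^p(\omega)}^p$ for suitable positive $\theta,\sigma$. Choosing $r$ so that $C r^{\theta}\|V\|_{M^q}=\delta$ yields the stated inequality with the constant $C\delta^{-d/(pq-d)}\|V\|_{M^q}^{pq/(pq-d)}$; one should check that the exponents $\theta=(pq-d)/q$ and $\sigma$ match so that $r^{-\sigma}\asymp(\delta/\|V\|_{M^q})^{-\sigma/\theta}$ produces exactly $\delta^{-d/(pq-d)}\|V\|_{M^q}^{pq/(pq-d)}$.

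The cases $p>d$ and $p=d$ are slightly different. When $p>d$, $V\in L^1_{\loc}$ and one uses Morrey's embedding $W^{1,p}\hookrightarrow C^{0,1-d/p}$, so on a cube of side $r$ one has $\|u-u_Q\|_{L^\infty(Q)}\le C r^{1-d/p}\|\nabla u\|_{L^p(Q)}$ and hence $\int_Q|V||u|^p\le C r^{p-d}\|\nabla u\|_{L^p(Q)}^p\int_Q|V| + |u_Q|^p\int_Q|V|$; again the exponent $p-d>0$ lets us make the gradient contribution small by taking $r$ small, and summing over cubes finishes the estimate (the precise power of $\delta$ in this regime is immaterial since we only need the Morrey–Adams inequality, not its sharp constant — but one can still track it). The borderline $p=d$ case uses the logarithmic Morrey norm with $\varphi_q(r)=\log^{q/d'}(\mathrm{diam}(\omega)/r)$ together with the Trudinger-type embedding, and the gain now comes from $\log^{-q/d'}(1/r)\to0$ as $r\to0$; the argument is structurally the same but the optimization in $r$ is replaced by an optimization over a logarithmic scale.

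For part $(ii)$, given $\omega'\Subset\omega$ with Lipschitz boundary and $u\in W^{1,p}(\omega')$, I would pick a cutoff $\chi\in C_c^\infty(\omega)$ with $\chi\equiv1$ on $\omega'$ and $0\le\chi\le1$, extend $u$ to $\tilde u\in W^{1,p}(\omega)$ by a bounded linear extension operator (here the Lipschitz regularity of $\partial\omega'$ is used), and apply part $(i)$ to $\chi\tilde u\in W_0^{1,p}(\omega)$ with a small $\delta/2$. Since $\nabla(\chi\tilde u)=\chi\nabla\tilde u+\tilde u\nabla\chi$ and $\|\nabla\chi\|_\infty$, $\|$extension$\|$ are fixed constants depending on $\omega,\omega'$, one absorbs the $\|\nabla\chi\cdot\tilde u\|_{L^p}^p$ term into the $\|u\|_{L^p(\omega')}^p$ term and the $\|\tilde u\|_{L^p(\omega)}^p$ into $\|u\|_{L^p(\omega')}^p$ by continuity of the extension; choosing $\delta_0$ small enough that the residual gradient coefficient is at most the prescribed $\delta$ gives the claim, with $C$ depending on the listed quantities. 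The main obstacle, and the place requiring genuine care, is the scaling bookkeeping in part $(i)$: getting the exponent of $r$ on the gradient term to come out as exactly $(pq-d)/q>0$ (so that small cubes help), and then verifying that the choice of $r$ reproduces the exact power $d/(pq-d)$ of $\delta^{-1}$ and $pq/(pq-d)$ of $\|V\|_{M^q}$ claimed in \eqref{ineq:uncertainty}; everything else is a standard covering-plus-Sobolev-Poincaré routine.
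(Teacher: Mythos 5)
The paper does not prove this theorem: it is stated here as a citation to \cite[Theorem~2.4]{pinchover_psaradakis}, so there is no ``paper's own proof'' to compare against. Your covering--plus--optimization architecture (cover $\omega$ by cubes of side $r$, estimate $\int_Q|V||u|^p$ via a Poincar\'e-type term and a lower-order term, then choose $r$ so that the gradient coefficient equals $\delta$) is the right one and does reproduce the claimed exponents $\theta=p-d/q=(pq-d)/q$ and $\sigma=d/q$, hence $\delta^{-d/(pq-d)}\|V\|_{M^q}^{pq/(pq-d)}$ after optimizing. Part (ii) via extension and cutoff is also standard and correct.

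However, there is a genuine gap in the key step of part (i). You propose the H\"older bound $\int_Q|V|\,|u-u_Q|^p\le\|V\|_{L^q(Q)}\|u-u_Q\|_{L^{pq'}(Q)}^p$ together with an alleged ``Morrey interpolation inequality'' $\|V\|_{L^q(Q)}\lesssim r^{-d/q'+d/q}\|V\|_{M^q}$. That inequality is false: the embedding goes the other way, $L^q\subsetneq M^q$, and $M^q$ functions need not be in $L^q$ (e.g.\ $V(x)=|x|^{-d/q}$ lies in $M^q$ near the origin but not in $L^q$). So the Morrey norm cannot control $\|V\|_{L^q(Q)}$, and the single-scale fact $r^{-d/q'}\int_Q|V|\le\|V\|_{M^q}$ alone is not enough either --- the ``further H\"older step that trades integrability'' that you invoke is precisely where the theorem's content lives, and it is not a routine step. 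A correct version: use the potential estimate $|u(x)-u_Q|\lesssim\int_Q|\nabla u(z)|\,|x-z|^{1-d}\,dz$, then H\"older in $z$ with a weight $|x-z|^{-\beta}$ for a fixed $\beta$ with $d-p<\beta<d/q'$ (nonempty since $q>d/p$), so that
\begin{equation*}
\Bigl(\int_Q\tfrac{|\nabla u(z)|}{|x-z|^{d-1}}\,dz\Bigr)^{p}\lesssim r^{\,p-d+\beta}\int_Q\tfrac{|\nabla u(z)|^{p}}{|x-z|^{\beta}}\,dz,
\end{equation*}
and after Fubini bound $\int_Q|V(x)|\,|x-z|^{-\beta}\,dx$ by a dyadic decomposition into shells $\{2^{-k-1}r\le|x-z|<2^{-k}r\}$; each shell contributes $(2^{-k}r)^{-\beta}\cdot(2^{-k}r)^{d/q'}\|V\|_{M^q}$, and the geometric series converges because $\beta<d/q'$, giving $Cr^{d/q'-\beta}\|V\|_{M^q}$. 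Combining yields $\int_Q|V||u-u_Q|^p\lesssim r^{p-d/q}\|V\|_{M^q}\int_Q|\nabla u|^p$, which is the one-cube bound you need. The essential point --- missing from your sketch --- is that the Morrey hypothesis must be used at every dyadic scale inside $Q$, not merely at scale $r$; any argument that only touches $\int_Q|V|$ (or falsely upgrades it to $\|V\|_{L^q(Q)}$) cannot close.
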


We recall the Allegretto-Piepenbrink-type  theorem (see,  \cite[Theorem~4.3]{pinchover_psaradakis}). This theorem  states that  $\mathcal{Q}_{p,A,V}(\vgf)\geq 0$ for all $\vgf\in C_c^\infty(\Omega)$ (in short, $\mathcal{Q}_{p,A,V}\geq 0$ in $\Gw$) if and only if the equation $Q_{p,A,V}(u) = 0$ possesses a positive (super)solution in $\Gw$. 


{\bf Throughout the paper, we assume that $\mathcal{Q}_{p,A,V}(\vgf)\geq 0$ for all $\vgf\in C_c^\infty(\Omega)$}.


The above assumption implies the solvability of the Dirichlet problem in bounded subdomains (see \cite{pinchover_psaradakis}  Theorem~3.10 and Proposition~5.2):
   
\begin{lemma}\label{lem_Dirichlet}
	Assume that $\mathcal{Q}_{p,A,V}\! \geq \! 0$ in $\Gw$. 
	Then for any Lipschitz subdomain $\gw \Subset \Gw$, $0\leq  g \in  C(\gw)$ and $0\! \leq \!f \!\in \! C(\partial \gw)$, there exists a nonnegative solution $u\in W^{1,p}(\gw)$ of the problem 
	$$Q_{p,A,V}(v)=g  \mbox{ in } \gw,  \mbox{ and } v=f \mbox{ on } \partial \gw.$$
  Moreover,  the solution $u$ is unique if either $f=0$ or $f>0$ on $\partial \gw$. 
\end{lemma}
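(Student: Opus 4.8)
The plan is to reduce the statement to the cited results in \cite{pinchover_psaradakis}, specifically Theorem~3.10 and Proposition~5.2 there, and to assemble them into the form stated here. First I would treat the homogeneous boundary case $f=0$: given $0\le g\in C(\bar\gw)$, one seeks $u\in W_0^{1,p}(\gw)$ solving $Q_{p,A,V}(u)=g$ weakly. The natural route is the direct method in the calculus of variations applied to the functional
\[
J(v)=\frac1p\int_\gw\big(|\nabla v|_A^p+V|v|^p\big)\d x-\int_\gw g\,v\d x,\qquad v\in W_0^{1,p}(\gw).
\]
The assumption $\mathcal{Q}_{p,A,V}\ge 0$ in $\Gw$, combined with the Morrey--Adams Theorem~\ref{theorem:uncertainty}(i) to absorb the potential term, gives that the quadratic-type part is coercive on $W_0^{1,p}(\gw)$ up to a lower-order $L^p$ term; here one must be slightly careful, since nonnegativity of $\mathcal{Q}$ alone only yields $\ge 0$, not strict positivity, so I would either invoke that on the strictly smaller Lipschitz subdomain $\gw\Subset\Gw$ the form is in fact strictly positive (a standard consequence of criticality theory, e.g. the existence of a positive strict supersolution / ground-state alternative from \cite{pinchover_psaradakis}), or appeal directly to Theorem~3.10 of \cite{pinchover_psaradakis} which already packages this. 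Weak lower semicontinuity follows from convexity of $\xi\mapsto|\xi|_A^p$ and $t\mapsto|t|^p$ together with compact embedding $W_0^{1,p}\hookrightarrow L^p$; a minimizer then exists and is the desired weak solution, and uniqueness when $f=0$ follows from strict convexity of $J$.

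Next I would handle general boundary data $0\le f\in C(\partial\gw)$. Since $\gw$ has Lipschitz boundary, extend $f$ to a function $F\in W^{1,p}(\gw)\cap C(\bar\gw)$ with $F\ge 0$, and look for $u=F+w$ with $w\in W_0^{1,p}(\gw)$; this turns the problem into $Q_{p,A,V}(F+w)=g$, i.e. a homogeneous-boundary problem with a modified right-hand side in the dual of $W_0^{1,p}(\gw)$, and the same variational argument applies (using Theorem~\ref{theorem:uncertainty}(ii) on $\gw$ to control $\int|V||u|^p$ in terms of $\|\nabla u\|_{L^p}^p$ plus $\|u\|_{L^p}^p$). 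Nonnegativity of the solution $u$ is obtained by testing the equation with $u^-=\min(u,0)\in W_0^{1,p}(\gw)$ (legitimate since $f\ge 0$ makes $u^-$ vanish on $\partial\gw$): using $g\ge 0$ and the ellipticity of $A$ one gets $\int_\gw|\nabla u^-|_A^p+\int_\gw V|u^-|^p\le 0$, and since $\mathcal{Q}_{p,A,V}(u^-)\ge 0$ this forces $u^-\equiv 0$. Finally, for the uniqueness claim when $f>0$ on $\partial\gw$, I would use that $f>0$ together with continuity gives $\inf_{\partial\gw}f>0$, so any two solutions $u_1,u_2$ are bounded below by a positive constant near $\partial\gw$; then the standard comparison/Picone-type argument for $(p,A)$-equations with a nonnegative zero-order term (again available from the criticality machinery of \cite{PT,pinchover_psaradakis}, e.g. the discreteness of the generalized principal eigenvalue or the strong maximum principle for $Q_{p,A,V}\ge 0$) yields $u_1=u_2$.

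The main obstacle, and the only genuinely delicate point, is the coercivity/uniqueness issue stemming from the fact that we only assume $\mathcal{Q}_{p,A,V}\ge 0$ globally rather than strict positivity: on a general bounded subdomain the energy need not be coercive, and in the critical case the homogeneous problem can have nontrivial solutions. The resolution is to exploit that $\gw\Subset\Gw$ is a \emph{proper} subdomain, where strict positivity (equivalently, the existence of a positive supersolution that is not a solution, hence a spectral gap) does hold; this is precisely the content that \cite[Theorem~3.10, Proposition~5.2]{pinchover_psaradakis} supply, so in the write-up I would state the lemma as a direct citation of those results and only sketch the nonnegativity of $u$ via the $u^-$ test function and the uniqueness via comparison, as above.
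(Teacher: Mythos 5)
Your proposal ultimately follows the same route as the paper: the paper gives no proof of this lemma and simply cites \cite[Theorem~3.10, Proposition~5.2]{pinchover_psaradakis}, and you conclude your write-up by doing the same, adding an explanatory sketch. The main flaw in that sketch is the claim that uniqueness in the case $f=0$ ``follows from strict convexity of $J$.'' The functional
\[
J(v)=\frac1p\int_\gw\big(|\nabla v|_A^p+V|v|^p\big)\dx-\int_\gw g\,v\dx
\]
is not strictly convex --- indeed not even convex --- on $W_0^{1,p}(\gw)$, because the term $\tfrac1p\int_\gw V|v|^p\dx$ fails to be convex in $v$ wherever $V<0$, and nothing in the hypotheses prevents $V\in M^q_{\loc}(p;\Gw)$ from changing sign. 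Strict convexity of $\xi\mapsto|\xi|_A^p$ does not repair this. In the half-linear setting, uniqueness for the homogeneous Dirichlet problem is instead obtained from a comparison argument adapted to the $p$-homogeneous structure: one uses the ``hidden convexity'' of $v\mapsto\int_\gw|\nabla v|_A^p\dx$ along the paths $u_\sigma=(\sigma u_1^p+(1-\sigma)u_2^p)^{1/p}$ of nonnegative functions (a D\'iaz--Sa\'a / Picone-type inequality), combined with the linearity of $\int_\gw V u_\sigma^p\dx$ in $\sigma$ and the strict positivity of $\mathcal{Q}_{p,A,V}$ on $W_0^{1,p}(\gw)$ for the proper subdomain $\gw\Subset\Gw$. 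That is precisely what \cite[Proposition~5.2]{pinchover_psaradakis} encodes, so the remedy is just to lean fully on that citation rather than on convexity of $J$. The remaining ingredients of your sketch --- lifting of the boundary datum, testing with $u^-$ for nonnegativity (noting that this step too tacitly requires strict positivity of $\mathcal{Q}_{p,A,V}$ on $W_0^{1,p}(\gw)$ to upgrade $\mathcal{Q}_{p,A,V}(u^-)=0$ to $u^-\equiv 0$), and the weak comparison principle for $f>0$ --- are consistent with the cited results, and you correctly isolated the genuinely delicate point: that $\mathcal{Q}_{p,A,V}\geq 0$ on $\Gw$ becomes strictly positive on $W_0^{1,p}(\gw)$ only because $\gw$ is a proper subdomain of $\Gw$.
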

We recall the local Harnack inequality of nonnegative solutions of \eqref{p_laplace equ_2}, see for example, \cite [Theorem~3.14]{maly_ziemer} for the case $p\leq d$ and  \cite[Theoren 7.4.1]{pucci_serin} for the case $p>d$.
\begin{theo}[Local Harnack inequality]\label{harnack_ineq}
	Let   $A, V,$ satisfy Assumptions~\ref{assump}, and let $\omega'\Subset\omega\Subset\Omega$. Then for any nonnegative solution $v$  of \eqref{p_laplace equ_2} in $\Omega$  we have 
	\begin{equation}
	\underset{\omega'}{\sup v} \leq C \,\underset{\omega'}{\inf v},
	\end{equation}
	where $C$ is a positive constant depending only on $d, p, \mathrm{dist}(\omega', \partial\omega)$, $\|A\|_{L^{\infty}(\gw;\R^{d\times d})}$, the ellipticity constant of $A$ in $\gw$, and $\|V\|_{M^q(p;\omega)}$ but not on $v$.
\end{theo}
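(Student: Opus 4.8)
The plan is to observe that \eqref{p_laplace equ_2} is a quasilinear second-order equation with merely measurable, locally uniformly elliptic principal part and a zeroth-order term whose coefficient lies in a borderline Morrey class, so that the asserted local Harnack inequality is an instance of the De Giorgi--Nash--Moser theory recorded in \cite[Theorem~3.14]{maly_ziemer} (for $p\le d$) and \cite[Theorem~7.4.1]{pucci_serin} (for $p>d$). Accordingly, what has to be done is: (i) to check that the $(p,A)$-structure satisfies the hypotheses of those references with constants of the advertised dependence, and (ii) to record that the Morrey hypothesis on $V$ is exactly the one under which the Moser iteration closes.

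For (i), write $\mathbf a(x,\xi):=|\xi|_{A(x)}^{p-2}A(x)\xi$ and $b(x,s):=V(x)|s|^{p-2}s$, so that a weak solution of \eqref{p_laplace equ_2} solves $-\Div\mathbf a(x,\nabla u)+b(x,u)=0$ in the sense of \eqref{eq-ws}. On any compact $K\subset\Omega$, the bounds $\Theta_K^{-1}|\xi|^2\le|\xi|_{A(x)}^2\le\Theta_K|\xi|^2$ give $\mathbf a(x,\xi)\cdot\xi=|\xi|_{A(x)}^{p}\ge\Theta_K^{-p/2}|\xi|^p$ and, bounding $|A(x)\xi|\le\|A\|_{L^\infty(K;\R^{d\times d})}|\xi|$ and treating $p\ge2$ and $1<p<2$ separately for the scalar factor $|\xi|_{A(x)}^{p-2}$, $|\mathbf a(x,\xi)|\le C(p,\Theta_K,\|A\|_{L^\infty(K;\R^{d\times d})})\,|\xi|^{p-1}$; the monotonicity of $\xi\mapsto\mathbf a(x,\xi)$ is inherited from the convexity of $\xi\mapsto|\xi|_{A(x)}^{p}$. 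For the lower-order term $|b(x,s)|\le|V(x)|\,|s|^{p-1}$ with $V\in M^q_{\loc}(p;\Omega)$; by the very definition of $M^q_{\loc}(p;\Omega)$ (namely $q>d/p$ when $p<d$, the logarithmic Morrey scale when $p=d$, and $V\in L^1_{\loc}$ when $p>d$) this is the admissible potential class in the cited theorems.

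For (ii), let $v\ge0$ be a solution and fix $\omega'\Subset\tilde\omega\Subset\omega$. One runs the two halves of the Moser scheme: the local boundedness estimate for subsolutions, which after the standard self-improvement of the exponent (via a logarithmic/John--Nirenberg estimate) reads
$$\sup_{\omega'}v\le C\Big(\bint_{\tilde\omega}v^{s_0}\,\d x\Big)^{1/s_0}$$
for the particular small exponent $s_0=s_0(d,p)>0$ furnished by the weak Harnack inequality for supersolutions,
$$\Big(\bint_{\tilde\omega}v^{s_0}\,\d x\Big)^{1/s_0}\le C\inf_{\omega'}v;$$
chaining the two displays gives $\sup_{\omega'}v\le C\inf_{\omega'}v$. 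At every step of both iterations the potential contribution $\int|V||w|^p$ (where $w$ runs through the relevant powers of $v$) is absorbed into the gradient term by the Morrey--Adams inequality, Theorem~\ref{theorem:uncertainty}, which is precisely why only the Morrey norm $\|V\|_{M^q(p;\omega)}$ — not $V$ itself — enters the estimates; the remaining constants are explicit in $d$, $p$, $\mathrm{dist}(\omega',\partial\omega)$, $\|A\|_{L^\infty(\omega;\R^{d\times d})}$ and the ellipticity constant of $A$ on $\omega$, hence independent of $v$. A finite chaining argument along a compact exhaustion then upgrades the estimate from small balls to an arbitrary $\omega'\Subset\omega$.

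The main point is that there is no new analytic difficulty here — the statement transcribes known results — so the real work is the bookkeeping in (i)--(ii). The only mildly delicate case is $p=d$, where one must invoke the logarithmic-Morrey version of the Morrey--Adams inequality and the corresponding form of the Moser iteration in \cite{maly_ziemer}; the case $p>d$ is softer, since there $V\in L^1_{\loc}$ already suffices and \cite[Theorem~7.4.1]{pucci_serin} applies directly.
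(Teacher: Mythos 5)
Your proposal is correct and takes essentially the same route as the paper: the paper itself offers no proof but simply points to \cite[Theorem~3.14]{maly_ziemer} for $p\le d$ and \cite[Theorem~7.4.1]{pucci_serin} for $p>d$, which is exactly the reduction you carry out, with the added (and sensible) bookkeeping of verifying the structure conditions on $\mathbf a(x,\xi)=|\xi|_{A(x)}^{p-2}A(x)\xi$ and noting that the Morrey--Adams inequality (Theorem~\ref{theorem:uncertainty}) is what lets the $V$-term be absorbed in the Moser iteration. Nothing further is needed.
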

The next result concerns the Harnack convergence principle for a  sequence of normalized positive solutions of equations of the form \eqref{p_laplace equ_2}  (cf. \cite[Proposition~2.11]{pinchover_psaradakis} where $A \in L_{\rm loc}^{\infty}(\Gw;\R^{d\times d})$ is  {\em fixed}, and $\{V_i\}_{i=1}^\infty  \!\subset \! M^q_{\loc}(p;\Omega_i)$ converges {\em strongly}  in $M^q_{\loc}(p;\Omega)$ to $\mathbb{V}\in M^q_{\loc}(p;\Omega)$).
\begin{prop}[Harnack convergence principle]\label{covergence_principle}
	Let $\{\Omega_i\}$ be a compact exhaustion of $\Omega$. Assume that $\{A_i\}_{i=1}^\infty$ is a sequence of symmetric and locally uniformly positive definite matrices such that  the local ellipticity constant does not depend on $i$, and   $\{A_i\}_{i=1}^\infty \!\subset  \! L^\infty_{\loc}(\Omega_i;\mathbb{R}^{d\times d})$ converges weakly in $L^\infty_{\loc}(\Omega;\mathbb{R}^{d\times d})$ to a matrix $\mathbb{A} \!\in  \! L^\infty_{\loc}(\Omega;\mathbb{R}^{d\times d})$. Assume also that $\{V_i\}_{i=1}^\infty  \!\subset \! M^q_{\loc}(p;\Omega_i)$ converges weakly in $M^q_{\loc}(p;\Omega)$ to $\mathbb{V}\in M^q_{\loc}(p;\Omega)$. 
	
	For each $i\geq 1$, let $v_i$ be a positive weak solution of the equation $Q_{p,A_i,V_i}(u)=0$ in $\Omega_i$ such that $v_i(x_0)=1$, where $x_0$ is a fixed reference point in $\Gw_1$.
	
	Then there exists $0<\beta<1$ such that, up to a subsequence, $\{v_i\}$ converges weakly in $W^{1,p}_{\loc}(\Gw)$ and in $C_{\loc}^\beta(\Omega)$ to a positive weak solution $v$ of the equation $Q_{p,\mathbb{A},\mathbb{V}}(u)=0$ in $\Omega$.
\end{prop}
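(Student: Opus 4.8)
The plan is to follow the classical three-step scheme for Harnack-type convergence: (a) obtain uniform local bounds on the sequence $\{v_i\}$ from the local Harnack inequality, (b) upgrade these to compactness in $W^{1,p}_{\loc}$ and $C^\beta_{\loc}$ by means of the interior De Giorgi–Nash–Moser regularity (H\"older) estimates and the Caccioppoli inequality, both of which are available in the present Morrey-space setting with constants depending only on the data that we have arranged to be $i$-independent, and (c) pass to the limit in the weak formulation \eqref{eq-ws}. Fix a relatively compact Lipschitz subdomain $\omega\Subset\Omega$; since $\{\Omega_i\}$ exhausts $\Omega$, we have $\omega\Subset\Omega_i$ for all large $i$. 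By the local Harnack inequality (Theorem~\ref{harnack_ineq}) applied on a chain of balls connecting $x_0$ to points of $\omega$, and using the normalization $v_i(x_0)=1$ together with the fact that the Harnack constant depends only on $d,p$, the (fixed) ellipticity constant, $\|A_i\|_{L^\infty}$ and $\|V_i\|_{M^q(p;\cdot)}$ — all bounded uniformly in $i$ because $\{A_i\}$, $\{V_i\}$ converge and hence are bounded in the respective local norms — we get $0<c_\omega\le v_i\le C_\omega<\infty$ on $\omega$ for all large $i$, with constants independent of $i$.

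Next I would derive uniform $W^{1,p}(\omega)$ bounds: testing \eqref{eq-ws} for $v_i$ with $\varphi=\eta^p v_i$ for a cutoff $\eta$ supported in a slightly larger $\omega'\Subset\Omega$, using the uniform ellipticity to bound $\int \eta^p|\nabla v_i|_{A_i}^p$ from below, Young's inequality on the cross term, and the Morrey–Adams theorem (Theorem~\ref{theorem:uncertainty}(i)) to absorb $\int|V_i||\eta v_i|^p$ — with $\delta$ chosen small and the resulting lower-order constant controlled by $\|V_i\|_{M^q(p;\omega')}$, again uniformly in $i$ — together with the $L^\infty$ bound just obtained, yields $\sup_i\|v_i\|_{W^{1,p}(\omega)}<\infty$. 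The interior H\"older estimate for quasilinear equations with Morrey potentials (the same regularity theory underlying Theorem~\ref{harnack_ineq}, e.g.\ \cite[Theorem~3.14]{maly_ziemer}) gives $\sup_i\|v_i\|_{C^\beta(\omega)}<\infty$ for some $\beta\in(0,1)$ depending only on the data. By a diagonal argument over an exhaustion, after passing to a subsequence, $v_i\rightharpoonup v$ weakly in $W^{1,p}_{\loc}(\Omega)$ and $v_i\to v$ in $C^\beta_{\loc}(\Omega)$ for every $\beta'<\beta$; in particular $v$ is continuous, nonnegative, and $v(x_0)=1$, so $v\not\equiv 0$, and by the Harnack lower bound $v>0$ throughout $\Omega$.

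It remains to show $v$ solves $Q_{p,\mathbb{A},\mathbb{V}}(u)=0$ weakly. Fix $\varphi\in C_c^\infty(\Omega)$ with support in some $\omega\Subset\Omega$. The potential term passes to the limit: $V_i|v_i|^{p-2}v_i\varphi\to \mathbb{V}|v|^{p-2}v\varphi$ because $v_i\to v$ uniformly on $\mathrm{spt}\,\varphi$ (so $|v_i|^{p-2}v_i\varphi\to|v|^{p-2}v\varphi$ uniformly) while $V_i\rightharpoonup\mathbb{V}$ in $M^q(p;\omega)$, and the product of a weakly convergent Morrey sequence with a uniformly convergent bounded sequence converges — this is precisely the type of argument in \cite[Proposition~2.11]{pinchover_psaradakis}. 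The genuinely delicate point — and the one I expect to be the main obstacle — is the principal term: from $v_i\rightharpoonup v$ in $W^{1,p}$ alone one cannot conclude $|\nabla v_i|_{A_i}^{p-2}A_i\nabla v_i\rightharpoonup|\nabla v|_{\mathbb A}^{p-2}\mathbb A\nabla v$, because the nonlinear operator is not weakly continuous and, moreover, $A_i$ converges only weakly$-*$ in $L^\infty_{\loc}$, so products like $A_i\nabla v_i$ are a product of two merely weakly convergent factors. The standard remedy is to prove strong convergence $\nabla v_i\to\nabla v$ in $L^p_{\loc}$ via a Minty/monotonicity trick: using $\varphi=\eta(v_i-v)$ (with $\eta$ a cutoff) as a test function, one writes
\[
\int \eta\big(|\nabla v_i|_{A_i}^{p-2}A_i\nabla v_i-|\nabla v|_{A_i}^{p-2}A_i\nabla v\big)\cdot(\nabla v_i-\nabla v)\,\d x
\]
and shows the right-hand side tends to $0$ using the equation for $v_i$, the already-established strong $L^p$ convergence of $v_i$, the weak convergence $\nabla v_i\rightharpoonup\nabla v$, the uniform bounds, and the Morrey–Adams absorption for the potential term; the strict monotonicity of $\xi\mapsto|\xi|_{A_i}^{p-2}A_i\xi$ (uniform in $i$ thanks to the uniform ellipticity) then forces $\nabla v_i\to\nabla v$ in $L^p_{\loc}$. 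Care is needed because $A_i\to\mathbb A$ only weakly$-*$: one handles this by splitting $|\nabla v|_{A_i}^{p-2}A_i\nabla v$ into $|\nabla v|_{\mathbb A}^{p-2}\mathbb A\nabla v$ plus an error controlled by $(A_i-\mathbb A)$ tested against the fixed function $\nabla v$ (which does go to $0$ by the definition of weak$-*$ convergence in $L^\infty_{\loc}$, since $\eta|\nabla v|^{p}\in L^1$), and symmetrically one replaces $A_i$ by $\mathbb A$ in the limiting flux using $v_i\to v$ strongly in $W^{1,p}_{\loc}$. Once $\nabla v_i\to\nabla v$ strongly in $L^p_{\loc}$, the flux converges a.e.\ (along a further subsequence) and is dominated, so $|\nabla v_i|_{A_i}^{p-2}A_i\nabla v_i\to|\nabla v|_{\mathbb A}^{p-2}\mathbb A\nabla v$ in $L^{p'}_{\loc}$, and passing to the limit in \eqref{eq-ws} identifies $v$ as a positive weak solution of $Q_{p,\mathbb A,\mathbb V}(u)=0$ in $\Omega$, completing the proof.
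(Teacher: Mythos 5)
Your overall architecture — Harnack for uniform $L^\infty$ bounds, Caccioppoli with Morrey--Adams absorption for uniform $W^{1,p}_{\loc}$ bounds, interior H\"older estimates for $C^\beta_{\loc}$ compactness, then a Minty-type monotonicity argument to pass to the limit in the principal part — is exactly what the paper does. The potential term is also handled identically (uniform convergence of $v_i^{p-1}$ on $\operatorname{spt}\varphi$ paired against weak $M^q_{\loc}$ convergence of $V_i$). So there is no divergence in strategy until the step you yourself flag as ``the genuinely delicate point.''

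There, however, your argument has a gap. In the Minty inequality the term to be killed is
\[
\int_\omega u\,\bigl(|\nabla v|^{p-2}_{A_i}A_i\nabla v-|\nabla v|^{p-2}_{\mathbb A}\mathbb A\nabla v\bigr)\cdot(\nabla v_i-\nabla v)\,\d x ,
\]
and the second factor $\nabla v_i-\nabla v$ is \emph{not} a fixed $L^1$ object: it varies with $i$ and converges only weakly (to $0$) in $L^p(\omega)$. Pairing a merely weak$-*$ convergent sequence $(A_i-\mathbb A)$ against a merely weakly convergent sequence does not give a vanishing limit; and your proposed alternative — replacing $A_i$ by $\mathbb A$ ``using $v_i\to v$ strongly in $W^{1,p}_{\loc}$'' — is circular, since strong $W^{1,p}_{\loc}$ convergence of $v_i$ is precisely what the whole monotonicity argument is meant to establish. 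The paper closes this gap by invoking \cite[Proposition~2.9]{moreira} to upgrade the weak $L^\infty_{\loc}$ convergence of the (locally uniformly elliptic, hence locally uniformly bounded) matrices $A_i$ to almost-everywhere convergence $A_i\to\mathbb A$; then, since $\bigl||\nabla v|^{p-2}_{A_i}A_i\nabla v-|\nabla v|^{p-2}_{\mathbb A}\mathbb A\nabla v\bigr|^{p'}\le C|\nabla v|^p\in L^1(\omega)$, dominated convergence gives strong $L^{p'}(\omega)$ convergence of the reference flux, and then H\"older against the bounded $\{\nabla v_i\}$ in $L^p$ kills the term. Without this a.e.\ convergence step your proof does not close: you need to either cite that upgrade or provide a substitute for it before the Minty argument can be completed.
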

\begin{proof}
	Since the sequence $\{A_i\}$ is locally uniformly elliptic and converges weakly in $L^\infty_{\loc}(\Omega;\mathbb{R}^{d\times d})$, it follows that that 
	$\|A_i\|_{L^\infty(\Gw'; \mathbb{R}^{d\times d})}\leq C$ for every $\Gw'\Subset\Omega$, and hence, $A_i$ are uniformly bounded in every $\Gw'\Subset\Omega$ expect for a set of measure zero.  By the definition of $v_i$ being a positive weak solution to $Q_{A_i,p,V_i}(v)=0$ in $\Omega_i$, we have 
	\begin{equation}\label{heqn_1}
	\int_{\Omega_i}|\nabla v_i|_{A_i}^{p-2}A_i\nabla v_i\cdot\nabla u\dx+\int_{\Omega_i}V_iv_i^{p-1}u\dx=0\,\,\,\text{for all}\,\,u\in W_0^{1,p}(\Omega_i).
	\end{equation}
	Also, by elliptic regularity,  $v_i$ are H\"older continuous for all $i\geq 1$. Fix $k\in \mathbb{N}$. Thus, for $u\in C_0^\infty(\Omega_k)$, by plugging $v_i|u|^p \in W_0^{1,p}(\Omega_k)$, $i\geq k$, as a test function in \eqref{heqn_1} we get
	\begin{equation*}
	\|\, |\nabla v_i|_{A_i}u\|^p_{L^p(\Omega_k)}\leq p \int_{\Omega_k}|\nabla v_i|_{A_i}^{p-1}|u|^{p-1}v_i|\nabla u|_{A_i}\dx+\int_{\Omega_k}|V_i|v_i^p|u|^p\dx.
	\end{equation*}
	For the first term of the right-hand side of the above equation, we apply Young's inequality: $p a b\leq \epsilon a^{p'}+[(p-1)/\epsilon]^{p-1}b^p$, $\epsilon\in(0,1)$, with $a=|\nabla v_i|_{A_i}^{p-1}|u|^{p-1}$ and $b=v_i|\nabla u|_{A_i}$. On the second term, we use the Morrey-Adams theorem (Theorem~\ref{theorem:uncertainty}). Then we arrive at 
	\begin{align*}
	(1-\epsilon)\||\nabla v_i|_{A_i}u\|^p_{L^p(\Omega_k)}\leq \left(\frac{p-1}{\epsilon}\right)^{p-1} &\|v_i|\nabla u|_{A_i}\|^p_{L^p(\Omega_k)} +\delta\|\nabla(v_iu)\|^p_{L^p(\Omega_k;\mathbb{R}^d)}  +C\|v_i u\|^p_{L^p(\Omega_k)}.
	\end{align*}
	Since the sequence $\{A_i\}$ is locally uniformly elliptic and bounded a.e., and by the inequality
	$$\|\nabla(v_iu)\|^p_{L^p(\Omega_k;\mathbb{R}^d)}\leq 2^{p-1}\left(\|v_i\nabla u\|^p_{L^p(\Omega_k;\mathbb{R}^d)}+\|u\nabla v_i\|^p_{L^p(\Omega_k;\mathbb{R}^d)}\right),$$
	we obtain the following estimates valid for $i\geq k$ and for any $u\in C_c^\infty(\Omega_k)$
	\begin{align}
	&\left((1\!-\!\epsilon)C_{\Omega_k}^p\!-\!2^{p-1}\delta C_{\Omega_k}^{-p}\right)\!\||\nabla v_i|u\|^p_{L^p(\Omega_k)}\label{heqn_2}  \\
	&\leq \left(\!\!\left(\!\frac{p\!-\!1}{\epsilon}\!\right)^{p\!-\!1}C^{-p}_{\Omega_k}\!+
	\!2^{p-1}\delta\!\right)\!\|v_i|\nabla u|\|^p_{L^p(\Omega_k)}\!\!+\!C(d,p,q,\delta,\|\mathbb{V}\|_{M^q(p;\Omega_{k\!+\!1})})\|v_i u\|^p_{L^p(\Omega_k)}.\nonumber
	\end{align}
	We now take an arbitrary $\omega\Subset\Omega$ and without loss of generality we assume that $x_0\in\omega$. Picking a subdomain  $\omega'\Subset\Omega$ such that $\omega\Subset\omega'$, we can find $k\geq 1$ such that $\omega'\Subset\Omega_k$. Then we choose $\delta <(1-\epsilon)2^{1-p}C^{2p}_{\Omega_k}$ and specialize $u\in C_c^\infty(\Omega_k)$ such that
	\begin{equation}\label{heqn_3}
	\text{supp}\{u\}\subset \omega',\,\,0\leq u\leq 1,\,\,u=1\,\text{in}\,\omega\,\,\text{and}\,\,|\nabla u|\leq \frac{1}{\text{dist}(\omega,\partial \omega')}\,\,\text{in}\,\,\omega'.
	\end{equation}
	Due to the local Harnack inequality \eqref{harnack_ineq}, the sequence $\{v_i\}_{i=1}^\infty$ of solutions is bounded  in $L^\infty(\omega)$. In fact, by elliptic regularity $\{v_i\}_{i=1}^\infty$ is bounded in $C^\ga(\omega)$, where $0<\ga\leq 1$. Moreover, by plugging $u$ as in \eqref{heqn_3} to the inequality \eqref{heqn_2}, we get
	$$\|\nabla v_i\|^p_{L^p(\omega;\mathbb{R}^d)}+\|v_i\|^p_{L^p(\omega)}\leq C(d,p,q,\epsilon,\delta, \text{dist}(\omega,\partial \omega'), C_{\Omega_k}, \|\mathbb{V}\|_{M^q(p;\Omega_{k+1})}),$$
	for all $i\geq k$. This implies that the sequence $\{v_i\}_{i=1}^\infty$ is bounded in $W^{1,p}(\omega)$. Hence up to a subsequence, still denoted by $\{v_i\}_{i=1}^\infty$, we obtain that  $\{v_i\}_{i=1}^\infty$ converges uniformly in $\gw$, and weakly to a nonnegative function $v\in W^{1,p}(\omega)\cap C^\ga(\gw)$ with $v(x_0)=1$. So, we have 
	\begin{align*}
	v_i \rightarrow v \quad  \text{ uniformly in } \omega,  \quad \text{ and } \nabla v_i  \rightharpoonup \nabla v\,\,\text{in}\,L^{p}(\omega;\mathbb{R}^d).
	\end{align*}
	We now show that $v$ is a weak solution of $Q_{p,\mathbb{A},\mathbb{V}}(u)=0$ in $\tilde{\omega}\Subset \omega$ such that $x_0\in \tilde{\omega}$. 
	Using the uniform convergence in $\gw$ of $v_i$ to $v$, we obtain
		\begin{align*}
	\left|\int_{\omega}(V_iv_i^{p-1}\vgf -\mathbb{V}v^{p-1}\vgf)\dx\right|
	\leq C\|v_i-v\|_{L^\infty(\gw)} \int_\omega |V_i|\dx+\left|\int_{\omega} (V_i -\mathbb{V})v^{p-1}\vgf \dx\right| .
	\end{align*}
	
	Since the sequence $\{V_i\}$ converges weakly to $\mathbb{V}$ in $M^q_{\loc}(p;\Omega)$, it is bounded in $L_{\loc}^1(\Omega)$, therefore, the first term tends to zero, while the second term tends to zero by the weak convergence of $\{V_i\}$ to $\mathbb{V}$.  Hence, 
	\begin{equation}\label{heqn_4}
	\int_{\omega}V_iv_i^{p-1}\vgf\d x \rightarrow \int_{\omega}\mathbb{V}v^{p-1}\vgf\d x \qquad \text{for all } \vgf\in C_c^\infty(\omega).
	\end{equation}
		It remains to show that 
	\begin{equation}\label{heqn_w}
	\xi_i:= |\nabla v_i|^{p-2}_{A_i}A_i\nabla v_i \rightharpoonup_{i\rightarrow\infty} |\nabla v|^{p-2}_{\mathbb{A}}\mathbb{A}\nabla v :=\xi\,\,\,\text{in}\,L^{p'}(\tilde{\omega};\mathbb{R}^d).
	\end{equation}
	To prove this claim, it is enough to prove that $\xi_i\to \xi$ a.e., and that  $\{\xi_i\}$ is bounded in $L^{p'}(\tilde{\omega};\mathbb{R}^d)$  (see, \cite{Mz}   and \cite[Lemma~3.73]{Heinonen}).
		The boundedness of $\{\xi_i\}$ in $L^{p'}(\tilde{\omega};\mathbb{R}^d)$ clearly follows  from the boundedness of $\{\nabla v_i\}$ in   $L^{p}(\tilde{\omega};\mathbb{R}^d)$. So, we need to prove the a.e. convergence of $\{\xi_i\}$ to $\xi$.

		
	By our assumption,	$\{A_i\}_{i=1}^\infty \subset L^\infty_{\loc}(\Omega_i;\mathbb{R}^{d\times d})$ converges weakly in $ L^\infty_{\loc}(\Omega;\mathbb{R}^{d\times d})$ to a matrix $\mathbb{A}\in L^\infty_{\loc}(\Omega;\mathbb{R}^{d\times d})$. Then let us consider $u$ as in \eqref{heqn_3} but with $\omega$ and $\omega'$ replaced by $\tilde{\omega}$ and $\omega$, respectively. So, by plugging $u(v_i-v)$ as a test function in \eqref{heqn_1}, we obtain
	\begin{equation}\label{heqn_5}
	\int_{\omega}u\xi_i\cdot \nabla(v_i-v)\d x= -\int_{\omega}(v_i-v)\xi_i\cdot \nabla u\dx -\int_\omega V_iv_i^{p-1}u(v_i-v)\dx.
	\end{equation}
	For the first integral on the right-hand side of \eqref{heqn_5}, apply H\"{o}lder's inequality to get
	\begin{align*}
	\left|-\int_{\omega}(v_i-v)\xi_i\cdot \nabla u\d x\right|&
	 \leq C_\omega^{p/{p'}}\|(v_i-v)\nabla u\|_{L^p(\omega;\mathbb{R}^d)}\|\nabla v_i\|^{p/{p'}}_{L^p(\omega;\mathbb{R}^d)}\\
	& \leq C(p,C_\omega, \text{dist}(\tilde{\omega},\partial \omega))\|(v_i-v)\|_{L^p(\omega)}\|\nabla v_i\|^{p/{p'}}_{L^p(\omega;\mathbb{R}^d)} \to_{i\to\infty} 0,
	\end{align*}
	 since $\|\nabla v_i\|_{L^p(\omega;\mathbb{R}^d)}$ are uniformly bounded and $v_i\rightarrow v$ in $L^p(\omega)$. A similar argument leading to \eqref{heqn_4} implies that the second integral on the right-hand side of \eqref{heqn_5} also converges to $0$. Thus,
	\begin{equation}\label{heqn_6}
	\int_{\omega}u\xi_i\cdot \nabla(v_i-v)\d x \rightarrow_{i\rightarrow\infty} 0.
	\end{equation}
	Notice that 
		\vspace{-3mm} 
	\begin{align*}
	&(\xi_i-\xi)\cdot(\nabla v_i-\nabla v)=(|\nabla v_i|^{p-2}_{A_i}A_i\nabla v_i-|\nabla v|^{p-2}_{A_i}A_i\nabla v)\cdot(\nabla v_i-\nabla v)\\
	&+\!(|\nabla v|^{p\!-\!2}_{A_i}\!\!A_i\!\nabla v\!-\!|\nabla v|^{p\!-\!2}_\mathbb{A} \!\mathbb{A}\nabla v)\!\!\,\cdot\,\!\!(\nabla v_i \!-\! \nabla v)
	\!\geq\! (|\nabla v|^{p\!-\!2}_{A_i}\!A_i\!\nabla v\!- \!|\nabla v|^{p\!-\!2}_\mathbb{A}\! \mathbb{A}\nabla v)\!\cdot\!(\nabla v_i \!\!-\! \nabla v).
	\end{align*}
	Since $A_i$ converges weakly to $\mathbb{A}$ in $L^\infty_{\loc}(\Omega, \mathbb{R}^{d\times d})$, it follows \cite[Proposition 2.9]{moreira} that $A_i\rightarrow \mathbb{A}$ a.e.. Therefore, 
	$$|\nabla v(x)|^{p-2}_{A_i(x)}A_i(x)\nabla v(x) \rightarrow |\nabla v(x)|^{p-2}_{\mathbb{A}(x)} \mathbb{A}(x)\nabla v(x)\quad \mbox{for a.e. in $\omega$, and  also }$$  
	\begin{align*}
	\left||\nabla v|^{p-2}_{A_i}A_i\nabla v-|\nabla v|^{p-2}_\mathbb{A} \mathbb{A}\nabla v\right|^{p'} \leq 2^{p'-1}(|\nabla v|^{p}_{A_i}  +|\nabla v|^p_{\mathbb{A}})\leq C|\nabla v|^{p},
	\end{align*}
	since the sequence $\{A_i\}$ is bounded a.e in $\omega$. Thus, the dominated convergence theorem implies 
	$$\lim_{i\rightarrow \infty}\int_{{\omega}}\left||\nabla v|^{p-2}_{A_i}A_i\nabla v-|\nabla v|^{p-2}_\mathbb{A} \mathbb{A}\nabla v\right|^{p'}\d x=0.$$
	Therefore, the H\"{o}lder inequality and the boundedness of $\nabla v_i$ in $L^p(\omega;\mathbb{R}^d)$ implies 
	\begin{equation}\label{heqn_7}
	\left |\int_{\omega}u(|\nabla v|^{p-2}_{A_i}A_i\nabla v-|\nabla v|^{p-2}_\mathbb{A}\mathbb{A}\nabla v)\cdot(\nabla v_i-\nabla v)\d x\right|\rightarrow_{i\rightarrow\infty}0.
	\end{equation}
	Now by using above vectors inequality we get
	\begin{align*}
	0 \! &\leq \int_{\tilde{\omega}}\!\left[(\xi_i-\xi)\cdot(\nabla v_i-\nabla v)- (|\nabla v|^{p-2}_{A_i}A_i\nabla v-|\nabla v|^{p-2}_\mathbb{A}\mathbb{A}\nabla v)\cdot(\nabla v_i-\nabla v)\right]\!\! \d x  \\
	&\leq \int_{{\omega}}\! u\left[\!(\xi_i-\xi)\cdot(\nabla v_i-\nabla v)- (|\nabla v|^{p-2}_{A_i}A_i\nabla v-|\nabla v|^{p-2}_\mathbb{A} \mathbb{A}\nabla v)\cdot(\nabla v_i-\nabla v)\!\right] \!\!\d x\\
	&=  \int_{\omega}\!u(\xi_i-\xi)\!\cdot\!(\nabla v_i-\nabla v)\d x \! - \!  \int_{\omega}u(|\nabla v|^{p-2}_{A_i}A_i\nabla v-|\nabla v|^{p-2}_\mathbb{A} \mathbb{A}\nabla v)\!\cdot\!(\nabla v_i-\nabla v)\d x \rightarrow_{i\rightarrow\infty}0,
	\end{align*}
	where we have used \eqref{heqn_6}, \eqref{heqn_7} and $\nabla v_i  \rightharpoonup \nabla v\,\,\text{in}\,L^{p}(\omega;\mathbb{R}^d)$. It follows that
	\begin{align}\label{heqn_8}
	&\lim_{i\rightarrow \infty} \!\int_{\tilde{\omega}} (|\nabla v_i|^{p-2}_{A_i}A_i\nabla v_i -|\nabla v|^{p-2}_{A_i}A_i\nabla v)\cdot(\nabla v_i-\nabla v)\dx\\
	&\!=\!\lim_{i\rightarrow \infty}	\!\!\int_{\tilde{\omega}}\!\!\left[\!(\xi_i-\xi)\cdot(\nabla v_i\!-\!\nabla v)- (|\nabla v|^{p-2}_{A_i}A_i\nabla v \!-\! |\nabla v|^{p-2}_\mathbb{A}\mathbb{A}\nabla v)\cdot(\nabla v_i-\nabla v)\!\right]\!\!\d x\!=\!0.\nonumber
	\end{align}
		
	To prove the claim \eqref{heqn_w}, we proceed as in \cite[Lemma~3.73]{Heinonen}. Denote
	\begin{align*}
	D_i = (|\nabla v_i|^{p-2}_{A_i}A_i\nabla v_i -|\nabla v|^{p-2}_{A_i}A_i\nabla v)\cdot(\nabla v_i-\nabla v).
	\end{align*} 
	Since $D_i$ is a nonnegative function, \eqref{heqn_8} implies that $D_i\rightarrow 0 \text{ in } L^1(\tilde{\omega})$. 
	Extracting a subsequence we have $D_i\rightarrow 0$ a.e. in $\tilde{\omega}$. Therefore, there exists a subset $Z$ of $\tilde{\omega}$ of zero measure such that for $x\in \tilde{\omega}\setminus Z$  we have $D_i(x)\rightarrow 0$. 
	
	Fix $x\in \tilde{\omega}\setminus Z$. Without loss of generality, we may assume that $|\nabla v(x)|<\infty$. Since $A_i$ are locally uniformly elliptic and bounded, we have 
	\begin{align*}
	D_i(x) & \geq  |\nabla v_i(x)|^p_{A_i} +|\nabla v(x)|^p_{A_i}- (|\nabla v(x)|_{A_i}|\nabla v_i(x)|^{p-1}_{A_i}+ |\nabla v_i(x)|_{A_i}|\nabla v(x)|^{p-1}_{A_i})\\
	& \geq C^p_{\tilde{\omega}} |\nabla v_i(x)|^p- C^{-p}_{\tilde{\omega}}(|\nabla v(x)||\nabla v_i(x)|^{p-1}+ |\nabla v_i(x)||\nabla v(x)|^{p-1})\\
	& \geq C^p_{\tilde{\omega}} |\nabla v_i(x)|^p -C(|\nabla v_i(x)|^{p-1}+ |\nabla v_i(x)|),
	\end{align*}
	where $C=\max (|\nabla v(x)|, |\nabla v(x)|^{p-1})C^{-p}_{\tilde{\omega}}$.   From the above inequality, it readily follows that $|\nabla v_i(x)|$ is uniformly bounded with respect to $i$, since $D_i(x)\rightarrow 0$.
	
	Let $\gh$ be a limit point of $\nabla v_i(x)$. Then $|\gh|<\infty$ and 
	\begin{align*}
	0& =\lim_{i\rightarrow \infty} (|\nabla v_i(x)|^{p-2}_{A_i}A_i\nabla v_i(x) -|\nabla v(x)|^{p-2}_{A_i}A_i\nabla v(x))\cdot(\nabla v_i(x)-\nabla v(x))\\
	&= (|\gh|^{p-2}_\mathbb{A}\mathbb{A}\gh-|\nabla v(x)|^{p-2}_\mathbb{A}\mathbb{A}\nabla v(x))\cdot(\gh -\nabla v(x)).
	\end{align*}
	This implies that $\gh=\nabla v(x)$. Thus we get $\nabla v_i(x)\rightarrow\nabla v(x)$ for every $x\in \tilde{\omega}\setminus Z$, i.e., $\nabla v_i \rightarrow \nabla v$ a.e. in $\tilde{\omega}$ and $|\nabla v_i(x)|^{p-2}_{A_i}A_i\nabla v_i(x)\rightarrow |\nabla v(x)|^{p-2}_\mathbb{A}\mathbb{A}\nabla v(x)$ a.e. in $\tilde{\omega}$. 
	Recall that the $L^{p'}$-norm of $\{|\nabla v_i|^{p-2}_{A_i}A_i\nabla v_i\}$ is bounded in $\tilde{\omega}$, therefore, \eqref{heqn_w} follows.
\end{proof}
Finally, we formulate a weak comparison principle  (WCP) for the case $A\in L_{\loc}^{\infty}(\Gw;\R^{d\times d})$, and  $V\in M^q_{\loc}(p;\Omega)$. For the proof see Theorem~5.3 in \cite{pinchover_psaradakis}.
\begin{theo}[Weak comparison principle]\label{weak_comparison}
	Let $\omega\Subset\Omega$ be a bounded Lipschitz domain, and let  $A$  and $V$ satisfy Assumptions~\ref{assump}. Assume that the equation $Q_{p,A,V}(u)=0$ admits a positive solution in $W_{\loc}^{1,p}(\Omega)$ and suppose that $u_1, u_2\in W^{1,p}(\omega)\cap C(\bar{\omega})$ satisfy the following inequalities
	\begin{equation*}
	\begin{cases*}
	Q_{p,A,V}(u_2)\geq 0\,\,\text{in}\,\,\omega,\\
	u_2 >0\,\,\,\,\,\text{on}\,\,\partial\omega,
	\end{cases*}\quad \mbox{and}\quad 
	\begin{cases*}
	Q_{p,A,V}(u_1)\leq Q_{p,A,V}(u_2) \,\,\text{in}\,\,\omega,\\
	u_1\leq u_2\,\,\,\,\,\text{on}\,\,\partial\omega.
	\end{cases*}
	\end{equation*}
	Then $u_1 \leq u_2$ in $\omega$.
\end{theo}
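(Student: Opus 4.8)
The plan is to reduce the comparison to a Picone/Díaz--Saá type inequality, which is by now the standard route for half-linear equations. Set $\Sigma:=\{x\in\gw:u_1(x)>u_2(x)\}$; by continuity $\Sigma$ is open, and since $u_1\le u_2$ on $\partial\gw$ the function $(u_1-u_2)^+$ vanishes on $\partial\gw$. The first step would be to show that $u_2>0$ in $\gw$. This is where the standing hypothesis --- existence of a positive solution of $Q_{p,A,V}(u)=0$, equivalently $\mathcal{Q}_{p,A,V}\ge0$ --- enters: on a bounded subdomain this forces a Hardy-type (subcriticality) inequality $\mathcal{Q}_{p,A,V}(\varphi)\ge\int_\gw W|\varphi|^p\dx$ with some $W>0$ for all $\varphi\in W_0^{1,p}(\gw)$. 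Testing the supersolution inequality for $u_2$ against $(u_2)^-\in W_0^{1,p}(\gw)$ (admissible since $(u_2)^-$ vanishes on $\partial\gw$, where $u_2>0$) gives $\mathcal{Q}_{p,A,V}((u_2)^-)\le0$, hence $(u_2)^-=0$; then $u_2\ge0$, and the minimum principle for nonnegative supersolutions (via the Harnack inequality, Theorem~\ref{harnack_ineq}) together with $u_2>0$ on $\partial\gw$ yields $u_2>0$ in $\gw$, in fact $u_2\ge\delta>0$ on $\overline{\gw}$. Consequently $u_1>u_2>0$ on $\Sigma$.

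Next I would test the weak inequalities against
$$\phi_1:=\frac{(u_1^p-u_2^p)^+}{u_1^{p-1}},\qquad\phi_2:=\frac{(u_1^p-u_2^p)^+}{u_2^{p-1}},$$
which are nonnegative, lie in $W_0^{1,p}(\gw)$ (the denominators are bounded below by $\delta^{p-1}$ on the supports and $(u_1^p-u_2^p)^+$ vanishes on $\partial\gw$), and satisfy $0\le\phi_1\le\phi_2$. Using $Q_{p,A,V}(u_1)\le Q_{p,A,V}(u_2)$ with the test function $\phi_1\ge0$, and then $Q_{p,A,V}(u_2)\ge0$ with $\phi_2-\phi_1\ge0$, one chains the two to obtain
$$\int_\gw|\nabla u_1|_A^{p-2}A\nabla u_1\cdot\nabla\phi_1\dx+\int_\gw V u_1^{p-1}\phi_1\dx\ \le\ \int_\gw|\nabla u_2|_A^{p-2}A\nabla u_2\cdot\nabla\phi_2\dx+\int_\gw V u_2^{p-1}\phi_2\dx .$$
Because $u_1^{p-1}\phi_1=u_2^{p-1}\phi_2=(u_1^p-u_2^p)^+$ on $\gw$ (and $u_i>0$ on $\Sigma$, so $|u_i|^{p-2}u_i=u_i^{p-1}$ there), the two potential terms are identical and cancel, leaving
$$\int_\Sigma\Big(|\nabla u_1|_A^{p-2}A\nabla u_1\cdot\nabla\phi_1-|\nabla u_2|_A^{p-2}A\nabla u_2\cdot\nabla\phi_2\Big)\dx\ \le\ 0 .$$

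To bound the very same integrand from below I would invoke Picone's inequality for the $(p,A)$-Laplacian: for $\psi>0$ and $\eta\ge0$ in $W^{1,p}$, $|\nabla\psi|_A^{p-2}A\nabla\psi\cdot\nabla(\eta^p\psi^{1-p})\le|\nabla\eta|_A^p$, with equality if and only if $\nabla(\eta/\psi)\equiv0$. Writing out $\nabla\phi_1$ and $\nabla\phi_2$ on $\Sigma$ and applying this once with $(\psi,\eta)=(u_1,u_2)$ and once with $(\psi,\eta)=(u_2,u_1)$, the integrand above equals
$$\big(|\nabla u_1|_A^{p}-|\nabla u_2|_A^{p-2}A\nabla u_2\cdot\nabla(u_1^p u_2^{1-p})\big)+\big(|\nabla u_2|_A^{p}-|\nabla u_1|_A^{p-2}A\nabla u_1\cdot\nabla(u_2^p u_1^{1-p})\big)\ \ge\ 0$$
a.e.\ on $\Sigma$. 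Combining with the previous display forces this nonnegative integrand to vanish a.e.\ on $\Sigma$, so both Picone inequalities are equalities; hence $\nabla(u_1/u_2)=0$ a.e.\ on $\Sigma$, i.e.\ $u_1=c\,u_2$ with a constant $c>1$ on each connected component $\Sigma_0$ of $\Sigma$. On $\partial\Sigma_0$ the continuous function $u_1-u_2$ is $\le0$ (it equals $0$ on $\partial\Sigma\cap\gw$ and is $\le0$ on $\partial\gw$), while $u_2>0$; letting $x\to\partial\Sigma_0$ in $u_1=c\,u_2$ gives $c\le1$, a contradiction. Therefore $|\Sigma|=0$, and by continuity $u_1\le u_2$ throughout $\gw$.

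The step I expect to be the crux is the first one, establishing $u_2>0$ in $\gw$: the Picone machinery is purely algebraic and the test-function manipulations are routine once positivity --- and hence the uniform bound $u_2\ge\delta>0$ on $\overline{\gw}$, which is what makes $\phi_1,\phi_2$ legitimately $W^{1,p}$ functions --- is in place, whereas the positivity genuinely requires the standing assumption through the subcriticality of $\mathcal{Q}_{p,A,V}$ on bounded subdomains and the strong minimum principle for supersolutions. A secondary technical point is the passage from $C_c^\infty(\gw)$ test functions to $\phi_1,\phi_2\in W_0^{1,p}(\gw)$, which is justified by density together with the Morrey--Adams theorem (Theorem~\ref{theorem:uncertainty}) controlling the potential terms.
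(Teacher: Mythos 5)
Your argument is correct, and it is the standard Picone/D\'iaz--Sa\'a route; the paper itself offers no proof but simply cites \cite[Theorem~5.3]{pinchover_psaradakis}, whose argument is, to the best of my knowledge, of the same type, so this is not a genuinely different route. A few small points worth tightening: (a) the reduction of $\mathcal{Q}_{p,A,V}((u_2)^-)\le0$ to $(u_2)^-=0$ requires the subcriticality of $\mathcal{Q}_{p,A,V}$ on the proper subdomain $\omega\Subset\Omega$ with a Hardy weight that is positive a.e.\ (or an equivalent spectral-gap argument), which is true but should be cited rather than merely asserted; (b) the strong minimum principle you invoke for the nonnegative supersolution $u_2$ rests on the \emph{weak} Harnack inequality for supersolutions (Mal\'y--Ziemer), not on Theorem~\ref{harnack_ineq}, which as stated concerns nonnegative solutions; (c) since $u_1$ may be negative or vanish off $\Sigma=\{u_1>u_2\}$, the functions $\phi_1,\phi_2$ should be defined as $(u_1^p-u_2^p)/u_i^{p-1}$ on $\Sigma$ and extended by zero, after which membership in $W_0^{1,p}(\omega)$ follows from $u_1>u_2\ge\delta>0$ on $\Sigma$, continuity of $u_1,u_2$ on $\bar\omega$, and $u_1\le u_2$ on $\partial\omega$; and (d) the passage from $C_c^\infty(\omega)$ to $W_0^{1,p}(\omega)$ test functions in the weak inequalities indeed needs the Morrey--Adams bound (Theorem~\ref{theorem:uncertainty}) to control the potential term, as you note. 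With these clarifications the proof is complete and, in spirit, matches the cited one.
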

\section{Fuchsian-type singularity}\label{sec_fuchs}
We introduce the notion of Fuchsian-type singularity with respect to the equation $Q_{p,A,V}(u) = 0$.  We allow the domain $\Omega$ to be unbounded and with nonsmooth boundary, and the singular point to be  $\zeta=\infty$. Thus, it is convenient to consider the one-point compactification $\hat{\mathbb{R}}^d$ of $\R^d$, i.e.,  $\hat{\mathbb{R}}^d:=\mathbb{R}^d\cup\{\infty\}$. By $\hat{\Omega}$, we denote the closure of $\Omega$ in $\hat{\mathbb{R}}^d$. This should not be confused with the one-point compactification of a domain $\Gw$ which is also considered in the sequel. In the latter topology, {\em a neighbourhood of infinity in $\Omega$} is a set of the form $\Omega\setminus K$, where $K\Subset\Omega$.  

 
{\bf Throughout this paper, we assume that singular point $\zeta \in  \partial\hat\Omega$ is either $0$ or $\infty$, and that $\gz$ is an isolated component of  $\partial\hat\Omega$.}  


With some abuse of notation, we write 
	\vspace{-3mm} 
$$a\to\gz \;\;\mbox{if} \;\;  
\begin{cases} 
a \to 0 & \mbox{in } \R \, \mbox{ and } \gz =0,\\
a \to \infty & \mbox{in  }  \R \, \mbox{ and }  \gz = \infty.
\end{cases} $$
  

We extend the definition of pointwise Fuchsian-type singularity (see \eqref{eq_pFs}).
\begin{defin}[Fuchsian singularity]\label{fuchsian}{\em 
	Let $\Omega$ be a domain in $\mathbb{R}^d$, and $A$  and $V$ satisfy Assumptions~\ref{assump}.   Let $\zeta\in \partial\hat{\Omega}$ be an isolated point of $\partial\hat{\Omega}$, where $\zeta = 0$ or $\zeta =\infty$. We say that  the operator $Q_{p,A,V}$ has a {\em Fuchsian-type singularity at} $\zeta$ (in short,  Fuchsian singularity at $\zeta$) if the following two conditions are satisfied: 
	\begin{leftbrace} 
			\begin{itemize}
				\item[(1)] The matrix $A$ is bounded and uniformly elliptic in a punctured neighbourhood  $\Omega'\subset \Omega$ of  $\gz$,
				that is, there is a positive constant $C$ such that, 
				\be\label{ell}
				C^{-1}|\xi|^2\leq |\xi|_{A}^2 \leq C^{-1}|\xi|^2
				\qquad \forall x\in\Gw'\mbox{ and }\xi=(\xi_1,...,\xi_d)\in\R^d.
				\ee
				\item[(2)] There exists a positive constant $C$ and $R_0>0$ such that 
				\begin{equation}\label{fuchsian_eqn}
				\begin{cases}
				\||x|^{p-{d}/{q}}\, V\|_{M^q(p;\mathcal{A}_R)}\leq C  & \text{if } p\neq d, \\[2mm]
				\|V\|_{M^q(d;\mathcal{A}_R)}\leq C & \text{if } p=d, 
				\end{cases}
				\end{equation}
				for all $0\!<\!R<\!R_0$ if $\gz\!=\!0$, and $R\!>\!R_0$ if $\gz\!=\!\infty$, where $\mathcal{A}_R\!:=\!\{x\mid  {R}/{2}\!\leq \! |x|\! < \!{3R}/{2} \}.$  			
			\end{itemize}
	\end{leftbrace}
		 }
\end{defin}
\begin{defin}{\em 
		 A set $\mathcal{A}\subset \Gw$ is called  an {\em essential set with respect to the singular point $\zeta$} if  there exist real numbers $0<a<1<b<\infty$, and a sequence of positive numbers $\{R_n\}$ with $R_n\rightarrow \zeta$ such that $\mathcal{A}=\cup \mathcal{A}_n, \mbox{ where }    \mathcal{A}_n=\{x\in\Omega\, |\,
	a R_n<|x|<b R_n\}$.
}
\end{defin}
\begin{rem}\label{remrk1}{\em 
	Similar to the linear case \cite{pinchover}, it turns out that it is sufficient to assume that inequality \eqref{fuchsian_eqn} is satisfied only on some {\it essential} subset of a neighbourhood of $\zeta$. More precisely, instead of \eqref{fuchsian_eqn}, we may assume that for some essential set  $\mathcal{A}\!\subset \!\Gw$ with respect to  $\zeta$ 
	\begin{equation}\label{fuchsian_eqn1}
	\begin{cases}
	\||x|^{p-{d}/{q}}\, V\|_{M^q(p;\mathcal{A}_n)}\leq C  & \text{if } p\neq d, \\
	\|V\|_{M^q(d;\mathcal{A}_n)}\leq C & \text{if } p=d, 
	\end{cases}
		\qquad \mbox{where $C$ is independent of $n$}. 
	\end{equation}
 }
\end{rem}
\begin{example}{\em 
	Let $\Omega=\mathbb{R}^d\setminus \{0\}$ and fix $1<p<\infty$. Consider the $p$-Laplace operator with the Hardy potential $V(x)=\lambda |x|^{-p}$
		\vspace{-3mm} 
	\begin{equation}\label{hardy_eqn}
	-\Delta_{p} u-\lambda \frac{|u|^{p-2}u}{|x|^p}=0\,\,\,\text{in}\,\,\Omega,
	\end{equation}
	where $\gl\leq C_H:= |(p-d)/p|^{p}$ is the Hardy constant. A straightforward computation shows that  \eqref{hardy_eqn} has Fuchsian singularity both at $\zeta=0$ and $\gz=\infty$ (in the sense of definition~\eqref{fuchsian}).
}
	\end{example}
	The above example implies:
\begin{example}{\em 
	Let $\Omega=\mathbb{R}^d\setminus \{0\}$ and fix $1<p<\infty$. Consider the operator $ Q_{p,A,V}=-\Delta_{p,A}(u)+V|u|^{p-2}u$, and assume that the matrix $A$ satisfies \eqref{ell} in $\Gw$, and   $|V(x)|\leq  C|x|^{-p}$ in $\Gw$.  Then  $Q_{p,A,V}$ has  Fuchsian singularity at the origin and at infinity. 
}
\end{example}
We now present a dilation process which uses the quasi-invariance of Fuchsian equations of the form \eqref{p_laplace equ_2} under the scaling $x\mapsto Rx$ for $R>0$. Let $A_R$ and $V_R$ be the scaled matrix and potential defined by 
	\vspace{-3mm} 
\begin{equation*}
A_R(x):= A(Rx), \quad V_R(x):= R^pV(Rx)\qquad x\in\,\Omega/R.
\end{equation*}
Consider the annular set $\mathcal{A}_R=(B_{3R/2}\setminus \bar{B}_{R/2} )\cap \Omega$. By our assumption that $\gz$ is an isolated singular point it follows that  $\mathcal{A}_R/R$ is fixed annular set $\tilde{\mathcal{A}}$ for $R$ `near' $\gz$, and  for such $R$ we have
\begin{equation}\label{eq_AR}
\|V_R\|_{M^q(p;\tilde{\mathcal{A}})} =\|V_R\|_{M^q(p;\mathcal{A}_R/R)}= R^{p-d/q}\|V\|_{M^q(p;\mathcal{A}_R)} \leq C,
\end{equation}
for $p\neq d$ and while for $p=d$, 
\begin{equation}\label{eq_AR1}
\|V_R\|_{M^q(d;\tilde{\mathcal{A}})}=\|V_R\|_{M^q(d;\mathcal{A}_R/R)}=\|V\|_{M^q(d;\mathcal{A}_R)}\leq C.
\end{equation}

\medskip

Let $Y:=\displaystyle{\lim_{n\rightarrow\infty} \Omega/R_{n}}$. Note that since by our assumptions, $\gz$ is an isolated component of  $\partial\hat\Omega$, it follows that $Y=(\R^d)^*=\R^d\setminus \{0\}$. 

The {\em limiting dilation process} is defined as follows. Let $\zeta =0$ or $\zeta=\infty$, and assume that there is a sequence $\{R_n\}$ of positive numbers satisfying $R_n\rightarrow \zeta$  such that
\begin{align}\label{LDP}
	\begin{cases}
	A_{R_n}\xrightarrow{n\rightarrow \infty} \mathbb{A}&  \mbox{ in the weak topology of }
L_{\mathrm{loc}}^{\infty}(Y;\R^{d\times d}), \mbox{ and } \\
V_{R_n}\xrightarrow{n\rightarrow \infty} \mathbb{V}&   \mbox{ in the weak topology of }
  M^q_{\loc}(p;Y).
  \end{cases}
\end{align}
 Motivated by the Harnack convergence principle (see, Proposition~\ref{covergence_principle}),  we define the {\em limiting dilated equation} with respect to equation \eqref{p_laplace equ_2} and the sequence $\{R_n\}$ that satisfies \eqref{LDP} by 
\begin{equation}\label{dilated_eqn}
\mathcal{D}^{\{R_n\}}(Q)(w):= -\Delta_{p,\mathbb{A}}(w)+\mathbb{V}|w|^{p-2}w=0\,\,\text{on}\,\,Y.
\end{equation}
The following proposition establishes a key invariance property of the limiting dilation process.  
\begin{prop}\label{fuchsian_prop} 
	Let   $A, V,$ satisfy Assumptions~\ref{assump}. Suppose that the quasilinear equation \eqref{p_laplace equ_2} has a Fuchsian singularity at $\zeta\in\{0,\infty\} \subset \partial\hat{\Omega}$, and let $\mathcal{D}^{\{R_n\}}(Q)(w)$  be a limiting dilated operator corresponding to a sequence ${R}_n\rightarrow\zeta$. Then the equation $\mathcal{D}^{\{R_n\}}(Q)(w)=0$ in $Y$ has Fuchsian singularity at $\zeta$.
\end{prop}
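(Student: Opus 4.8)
The plan is to verify that the limiting dilated operator $\mathcal{D}^{\{R_n\}}(Q)(w) = -\Delta_{p,\mathbb{A}}(w) + \mathbb{V}|w|^{p-2}w$ satisfies the two defining conditions of Definition~\ref{fuchsian} at $\zeta$. First I would check condition~(1): the original operator $Q$ has a Fuchsian singularity at $\zeta$, so $A$ satisfies the ellipticity bound \eqref{ell} with a constant $C$ on a punctured neighbourhood of $\zeta$. Since $A_{R_n}(x) = A(R_n x)$ is just a rescaling, each $A_{R_n}$ satisfies the same two-sided bound \eqref{ell} with the same constant $C$ on $Y = (\mathbb{R}^d)^*$ (for $n$ large, so that $R_n x$ lies in the neighbourhood where \eqref{ell} holds, for each $x$ in a fixed compact subset of $Y$). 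Because $A_{R_n} \to \mathbb{A}$ weakly in $L^\infty_{\loc}(Y;\mathbb{R}^{d\times d})$, and by \cite[Proposition~2.9]{moreira} also a.e.\ along a subsequence, the pointwise bound $C^{-1}|\xi|^2 \le |\xi|^2_{\mathbb{A}(x)} \le C|\xi|^2$ passes to the limit for a.e.\ $x \in Y$. Since $\mathbb{A} \in L^\infty_{\loc}(Y;\mathbb{R}^{d\times d})$ is the a.e.\ limit, it is bounded and uniformly elliptic on every punctured neighbourhood of $\zeta$ contained in $Y$, giving condition~(1).

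Next I would verify condition~(2), the weighted Morrey bound \eqref{fuchsian_eqn} for $\mathbb{V}$. The key observation is the scaling identity already recorded in \eqref{eq_AR}–\eqref{eq_AR1}: for $R$ near $\zeta$, the dilated annulus $\mathcal{A}_R/R$ equals a fixed annulus $\tilde{\mathcal{A}}$, and
\[
\|V_{R_n}\|_{M^q(p;\tilde{\mathcal{A}})} = R_n^{p-d/q}\|V\|_{M^q(p;\mathcal{A}_{R_n})} \le C
\]
(for $p\ne d$; and $\|V_{R_n}\|_{M^q(d;\tilde{\mathcal{A}})} = \|V\|_{M^q(d;\mathcal{A}_{R_n})} \le C$ for $p=d$), with $C$ independent of $n$, precisely because $Q$ has a Fuchsian singularity. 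Thus the sequence $\{V_{R_n}\}$ is bounded in $M^q(p;\tilde{\mathcal{A}})$ uniformly. Then I would use the weak lower semicontinuity of the Morrey norm under weak convergence in $M^q_{\loc}(p;Y)$ — i.e.\ $\|\mathbb{V}\|_{M^q(p;\tilde{\mathcal{A}})} \le \liminf_{n\to\infty} \|V_{R_n}\|_{M^q(p;\tilde{\mathcal{A}})} \le C$ — to transfer the bound to the limit potential $\mathbb{V}$ on the fixed annulus $\tilde{\mathcal{A}}$. Now, because the limiting equation \eqref{dilated_eqn} lives on $Y = (\mathbb{R}^d)^*$, which is dilation-invariant, and $\mathbb{A}, \mathbb{V}$ arise as limits under the same normalization, one checks that the scaled coefficients $(\mathbb{A})_R(x) = \mathbb{A}(Rx)$, $(\mathbb{V})_R(x) = R^p\mathbb{V}(Rx)$ again satisfy the analogues of \eqref{eq_AR}–\eqref{eq_AR1} with the fixed annulus $\tilde{\mathcal{A}}$; unwinding this gives $\||x|^{p-d/q}\mathbb{V}\|_{M^q(p;\mathcal{A}_R)} \le C$ for all $R$ (near $\zeta$, hence for all $R$ by scaling), which is exactly \eqref{fuchsian_eqn}. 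For $p=d$ one argues identically with $\varphi_q$ and the log-weighted norm.

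The main subtlety I expect is making the weak lower semicontinuity of the Morrey norm rigorous and, relatedly, pinning down exactly what "weak convergence in $M^q_{\loc}(p;Y)$" means here so that the $\liminf$ inequality genuinely holds on each annulus $\omega \cap B_r(y)$ uniformly in $y$ and $r$. Weak convergence in this Morrey space should give, for each fixed ball $B_r(y)$, $\int_{\tilde{\mathcal{A}}\cap B_r(y)} |\mathbb{V}| \le \liminf_n \int_{\tilde{\mathcal{A}}\cap B_r(y)} |V_{R_n}|$ (via testing against signs, or lower semicontinuity of total variation along the weakly-$*$ convergent measures $|V_{R_n}|\,dx$); dividing by $r^{d/q'}$ and taking the supremum over $(y,r)$ then yields the norm bound, but one must be careful that the supremum and the $\liminf$ interchange correctly — taking $\sup$ after $\liminf$ is the safe order and suffices. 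A second, more bookkeeping-type point is ensuring the two-sided ellipticity constant for $\mathbb{A}$ on a \emph{punctured neighbourhood of} $\zeta$ (not merely on compact subsets of $Y$) — but since $Y$ itself is already a punctured neighbourhood of $\zeta$ and the constant $C$ from \eqref{ell} is uniform in $n$ and independent of the compact subset, this causes no real difficulty. Everything else is the routine scaling algebra already displayed in \eqref{eq_AR}–\eqref{eq_AR1}.
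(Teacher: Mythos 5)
Your approach matches the paper's: both rely on the scaling identities \eqref{eq_AR}--\eqref{eq_AR1} to translate the Fuchsian bound into a uniform Morrey bound on the fixed annulus $\tilde{\mathcal{A}}$, and then weak lower semicontinuity of the Morrey norm to pass that bound from $\{V_{R_n}\}$ to the weak limit $\mathbb{V}$. The paper's proof is shorter than yours for two reasons: it says nothing about condition~(1) of Definition~\ref{fuchsian} (uniform ellipticity of $\mathbb{A}$), which you verify explicitly via a.e.\ convergence; and it invokes Remark~\ref{remrk1} and stops once it has $\||x|^{p-d/q}\mathbb{V}\|_{M^q(p;\tilde{\mathcal{A}})}\le C$ on the single fixed annulus $\tilde{\mathcal{A}}=\mathcal{A}_n/R_n$, leaving implicit the passage from that one annulus to the full family $\{\mathcal{A}_R\}_{R \,\mathrm{near}\,\zeta}$ that \eqref{fuchsian_eqn} actually demands. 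Your ``unwind'' step is exactly the piece that supplies this, and it is the substantive addition in your argument; it is worth making it precise rather than saying ``one checks that.'' The clean version: for a fixed $R>0$, the re-scaled potentials $V_{RR_n}=(V_{R_n})_R$ converge weakly to $(\mathbb{V})_R=R^p\mathbb{V}(R\,\cdot)$ in $M^q_{\loc}(p;Y)$, and since $RR_n\to\zeta$ the Fuchsian bound for $V$ at the combined scale $RR_n$ gives $\|V_{RR_n}\|_{M^q(p;\tilde{\mathcal{A}})}\le C$ uniformly in $n$ and $R$; lower semicontinuity then yields $\|(\mathbb{V})_R\|_{M^q(p;\tilde{\mathcal{A}})}\le C$, which upon undoing the scaling is $\||x|^{p-d/q}\mathbb{V}\|_{M^q(p;\mathcal{A}_R)}\le C$ for all $R$ near $\zeta$, i.e.\ \eqref{fuchsian_eqn}. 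Your treatment of lower semicontinuity (fix $(y,r)$, pass to $\liminf$, then take supremum) is in the correct order. With the unwind step spelled out, the proposal is correct and, if anything, somewhat more self-contained than the paper's own proof.
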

\begin{proof}
	 	It is trivial to verify that the proposition holds true when $p=d$. Now for $p\neq d$, by Remark~\ref{remrk1}, there exists $C>0$ and an {\em essential} set $\mathcal{A}=\cup \mathcal{A}_n, \mbox{ where }    \mathcal{A}_n=\{x\in\Omega\, |\,aR_n<|x|<bR_n\}$ such that
	 $$\||x|^{p-d/q}V\|_{M^q(p;\mathcal{A}_n)}\leq C  \qquad \forall n\in \N.$$
	 We claim that 
	 $$\||x|^{p-d/q}\mathbb{V}\|_{M^q(p;\mathcal{A}/R_n)}\leq C \qquad \forall n\in \N.$$
	 Recall that for each $n$, $\mathcal{A}_n/R_n$ is a fixed annular set $\tilde{A}=\{x \,|\, a<|x|<b\}$. Assume that $p<d$, so, $p-d/q>0$. Then we have 
	 \begin{align*}
	 &\||x|^{p-d/q}\mathbb{V}\|_{M^q(p;\mathcal{A}_n/R_n)} \leq b^{p-d/q}\|\mathbb{V}\|_{M^q(p;\mathcal{\tilde{A}})}
	  \leq b^{p-d/q} \underset{n\rightarrow\infty}{\lim \inf}\,\|V_{R_n}\|_{M^q(p;\mathcal{A}_n/R_n)} \\[2mm]
	  &= b^{p-d/q} \underset{n\rightarrow\infty}{\lim \inf}\,\|R_n^{p-d/q}V\|_{M^q(p;\mathcal{A}_n)}
	 \leq \left(\frac{b}{a}\right)^{p-d/q} \underset{n\rightarrow\infty}{\lim \inf}\,\||x|^{p-d/q}V\|_{M^q(p;\mathcal{A}_n)}
	  \leq C.
	 \end{align*}
	 For $p>d$,  $M^q_{\text{loc}}(\Omega)=L^1_{\text{loc}}(\Omega)$, and similarly we get 
	$ \||x|^{p-d}\mathbb{V}\|_{L^1(\mathcal{A}_n/R_n)} \leq C$.
\end{proof}
\begin{defin}{\em 
		Let $\mathcal{G}_\zeta$ be the germs of all positive solutions $u$ of the equation $Q_{p,A,V}(w)=0$ in relative punctured neighbourhoods  of $\zeta$.
	We say that  $\zeta$ is a regular point of the above equation if for any two positive solutions $u,v\in \mathcal{G}_\zeta$
	$$\underset{x\in\Omega'}{\displaystyle{\lim_{x\rightarrow \zeta}}}\frac{u(x)}{v(x)} \quad \text{exists in the wide sense}.$$
}
\end{defin}
Next, we define two types of positive solutions of minimal growth which was introduced by Agmon \cite{agmon} for the linear case, and was later extended to $p$-Laplacian type equations in \cite{frass_pinchover,PR, PT}.
\begin{defin}{\em 
		(1) Let $K_0$ be a compact subset of $\Omega$. A positive solution $u$ of the equation $Q_{p,A,V}(u)=0$ in $\Omega\setminus K_0$ is said to be a {\it positive solution of minimal growth in a neighbourhood of infinity in} $\Omega$ if for any smooth compact subset $K$ of $\Omega$ with $K_0\Subset \mbox{int}\, K$ and any positive supersolution $v\in C(\Omega\setminus\mbox{int}\,K)$ of the equation $Q_{p,A,V}(u)=0$ in $\Omega\setminus K$, we have
		$$u\leq v\quad\mbox{ on } \partial K \quad \Rightarrow \quad u\leq v \quad \mbox{ in }\Omega\setminus K.$$
		
		(2) A positive solution of the equation $Q_{p,A,V}(u)=0$ in $\Omega$ which has minimal growth in a neighbourhood of infinity in $\Omega$ is called a {\it ground state}  of $Q_{p,A,V}$ in $\Gw$.
		 
		(3) Let $\zeta\in \partial\hat{\Omega}$ and let $u$ be a positive solution of the equation $Q_{p,A,V}(u)=0$ in $\Omega$. Then $u$ is said to be a {\it positive solution of minimal growth in a neighbourhood of} $\partial\hat{\Omega}\setminus\{\zeta\}$ if for any relative neighbourhood $K\Subset \hat{\Omega}$ of $\zeta$ such that $\Gamma:= \partial K\cap \Omega$ is smooth, and any positive supersolution $v\in C((\Omega\setminus K)\cup \Gamma)$ of the equation $Q_{p,A,V}(u)=0$ in $\Omega\setminus K$, we have 
		$$u\leq v\quad \mbox{ on } \Gamma \quad \Rightarrow \quad u\leq v\quad \mbox{ in } \Omega\setminus K.$$
	}
\end{defin}
\begin{prop}\label{minimum_growth1}
	Suppose that $\mathcal{Q}_{p,A,V}\geq 0$ in $\Gw$, and  $Q_{p,A,V}$ has an isolated Fuchsian  singularity at $\zeta\in\partial\hat{\Omega}$, where $\gz\in\{0,\infty\}$. Then equation \eqref{p_laplace equ_2} admits a positive solution in $\Omega$ of minimal growth in a neighbourhood of $\partial\hat{\Omega}\setminus\{\zeta\}$.
\end{prop}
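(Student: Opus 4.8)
The plan is to adapt Agmon's exhaustion construction to the present quasilinear Morrey framework, using the Harnack convergence principle to pass to the limit. First I would exploit the standing hypothesis that $\zeta$ is an isolated component of $\partial\hat{\Omega}$: it allows us to fix a compact exhaustion $\{\Omega_i\}$ of $\Omega$ whose boundaries split as $\partial\Omega_i=\Sigma_i\sqcup\Gamma_i$, where $\Sigma_i$ is a smooth closed hypersurface bounding a relative neighbourhood $U_i$ of $\zeta$ in $\hat{\Omega}$ with $U_{i+1}\subset U_i$ and $\bigcap_i\overline{U_i}=\{\zeta\}$, while $\Gamma_i$ is the remaining ``outer'' part of $\partial\Omega_i$, which recedes to $\partial\hat{\Omega}\setminus\{\zeta\}$ as $i\to\infty$ (for $\zeta=0$ one takes $U_i=B_{\delta_i}\setminus\{0\}$, $\Sigma_i=S_{\delta_i}$ with $\delta_i\to 0$; for $\zeta=\infty$ one takes $U_i=\{|x|>R_i\}\cap\Omega$ with $R_i\to\infty$). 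Since $\mathcal{Q}_{p,A,V}\geq 0$, Lemma~\ref{lem_Dirichlet} provides, for each $i$, a nonnegative solution $u_i\in W^{1,p}(\Omega_i)$ of $Q_{p,A,V}(u_i)=0$ in $\Omega_i$ with $u_i=1$ on $\Sigma_i$ and $u_i=0$ on $\Gamma_i$; by the local Harnack inequality (Theorem~\ref{harnack_ineq}) one has $u_i>0$ in $\Omega_i$. I would then fix a reference point $x_0\in\Omega_1$ and normalise $\tilde u_i:=u_i/u_i(x_0)$.

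Next I would pass to the limit $i\to\infty$. Applying the Harnack convergence principle (Proposition~\ref{covergence_principle}) with the constant sequences $A_i\equiv A$, $V_i\equiv V$ (which trivially satisfy its hypotheses), one obtains, along a subsequence, a positive weak solution $u\in W^{1,p}_{\loc}(\Omega)$ of $Q_{p,A,V}(u)=0$ in $\Omega$ with $u(x_0)=1$, such that $\tilde u_i\to u$ in $C^{\beta}_{\loc}(\Omega)$.

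It then remains to verify that $u$ has minimal growth in a neighbourhood of $\partial\hat{\Omega}\setminus\{\zeta\}$. Let $K\Subset\hat{\Omega}$ be a relative neighbourhood of $\zeta$ with $\Gamma:=\partial K\cap\Omega$ smooth, and let $v\in C((\Omega\setminus K)\cup\Gamma)$ be a positive supersolution of $Q_{p,A,V}(u)=0$ in $\Omega\setminus K$ with $u\leq v$ on $\Gamma$. Because $\overline{U_i}$ shrinks to $\{\zeta\}$ and $\Gamma_i$ recedes to $\partial\hat{\Omega}\setminus\{\zeta\}$, for all large $i$ we have $\Sigma_i\subset K$, $\Gamma\Subset\Omega_i$ and $\Gamma_i\subset\Omega\setminus\overline{K}$, so that $\omega_i:=\Omega_i\setminus K$ is a Lipschitz subdomain with $\partial\omega_i=\Gamma\sqcup\Gamma_i$ and $\tilde u_i=0$ on $\Gamma_i$. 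Fix $\delta>0$ and set $\mu:=\min_\Gamma v>0$; by the uniform convergence $\tilde u_i\to u$ on the compact surface $\Gamma$ and the inequality $u\leq v$ there, we get $\tilde u_i\leq(1+\delta/\mu)v$ on $\Gamma$, hence on all of $\partial\omega_i$, for $i$ large. By the $(p-1)$-homogeneity of $Q_{p,A,V}$ the function $(1+\delta/\mu)v$ is again a positive supersolution in $\omega_i$, so the weak comparison principle (Theorem~\ref{weak_comparison}, applicable since $\mathcal{Q}_{p,A,V}\geq 0$ guarantees a positive solution in $\Omega$) yields $\tilde u_i\leq(1+\delta/\mu)v$ in $\omega_i$. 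Letting $i\to\infty$ along the convergent subsequence and then $\delta\to 0$ gives $u\leq v$ in $\Omega\setminus K$, which is exactly the required property.

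The only real difficulty I anticipate is topological bookkeeping on the ideal boundary $\partial\hat{\Omega}$: organising the exhaustion so that the inner pieces $\Sigma_i$ and the outer pieces $\Gamma_i$ are smooth, disjoint, and behave as stated, and checking that $\Gamma_i$ eventually leaves $\overline{K}$ (which splits into ``$\Gamma_i$ avoids a fixed neighbourhood of $\zeta$ inside $K$'' together with ``$\Gamma_i$ leaves the compact remainder $\overline{K}\setminus$(that neighbourhood)''); all of this uses only that $\zeta$ is an isolated component of $\partial\hat{\Omega}$. The analytic content is supplied entirely by Lemma~\ref{lem_Dirichlet}, Theorem~\ref{harnack_ineq}, Proposition~\ref{covergence_principle} and Theorem~\ref{weak_comparison}, so that $\mathcal{Q}_{p,A,V}\geq 0$ and the isolatedness of $\zeta$ are the hypotheses genuinely used here; the Fuchsian/Morrey bound \eqref{fuchsian_eqn} on $V$ enters the finer asymptotic results of the later sections rather than this existence statement.
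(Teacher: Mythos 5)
Your proof is correct, and although it rests on the same four tools the paper uses---Dirichlet solvability (Lemma~\ref{lem_Dirichlet}), the local Harnack inequality (Theorem~\ref{harnack_ineq}), the Harnack convergence principle (Proposition~\ref{covergence_principle}), and the weak comparison principle (Theorem~\ref{weak_comparison})---it assembles them differently and more directly than the paper does. The paper splits into two cases: for $\zeta=0$ it simply cites \cite[Theorem~5.7]{pinchover_psaradakis}, remarking that the proof there applies when $x_0\in\partial\Omega$ is an isolated boundary point; for $\zeta=\infty$ it picks a sequence $x_n\to\infty$, produces solutions $u_{x_n}$ of minimal growth in a neighbourhood of infinity in $\Omega\setminus\{x_n\}$, normalises and extracts a Harnack limit $v$, and then verifies minimal growth of $v$ by a diagonal argument, exploiting that each $u_{x_n}$ is itself an exhaustion limit $\lim_{k}v_{n,k}$ with $v_{n,k}$ vanishing on $\partial\Omega_k\cap\Omega'$. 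Your version collapses this into a single exhaustion that treats $\zeta=0$ and $\zeta=\infty$ uniformly: the mixed Dirichlet data $u_i=1$ on $\Sigma_i$ and $u_i=0$ on $\Gamma_i$ simultaneously encodes nondegeneracy near $\zeta$ and the vanishing that minimal growth at $\partial\hat\Omega\setminus\{\zeta\}$ requires, so one Harnack-convergence limit followed by the same WCP verification the paper performs finishes the argument. Two small remarks. First, Lemma~\ref{lem_Dirichlet} guarantees uniqueness only for boundary data that is identically zero or strictly positive; your mixed data yields existence but not (directly) uniqueness---you never need uniqueness, and positivity of $u_i$ in $\Omega_i$ then follows from the Harnack inequality exactly as you say, so nothing breaks. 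Second, you are right that the Fuchsian/Morrey bound on $V$ plays no role at this stage: the paper's proof likewise uses only $\mathcal Q_{p,A,V}\ge 0$ and the isolatedness of $\zeta$.
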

\begin{proof}
	Let $\zeta=0$. By \cite[Theorem 5.7]{pinchover_psaradakis}, for any $x_0\in \Omega$, the equation $Q_{p,A,V}(u)=0$ admits a positive solution $u_{x_0}$ of the equation $Q_{p,A,V}(u)=0$ in $\Omega\setminus\{x_0\}$ of minimal growth in a neighbourhood of infinity in $\Omega$. Note that the proof of \cite[Theorem 5.7]{pinchover_psaradakis} applies also in case that $x_0\in \partial \Gw$ is an isolated singular point of $\partial \Gw$. Hence,  \eqref{p_laplace equ_2} admits a positive solution in $\Omega$ of minimal growth in a neighbourhood of $\partial\hat{\Omega}\setminus\{0\}$.
	
	
	Now consider the case when $\zeta=\infty$. Let $\{x_n \}\subset \Gw$ be a sequence such that $x_n\to\infty$. Fix a reference point $x_0\in \Gw$, and  a compact smooth  exhaustion $\{\Gw_k\}$ of $\Gw$.  For $n\in\N$,  denote by $u_{x_n}$  a positive solution of the equation $Q_{p,A,V}(u)=0$ in $\Omega\setminus\{x_n\}$ of minimal growth in a neighbourhood of infinity in $\Omega$, and let  $v_{x_n}(x):= u_{x_n}(x)/u_{x_n}(x_0)$. By the Harnack convergence principle, up to a subsequence, $v_{x_n}$ converges locally uniformly to $v$ which is a positive solution of the equation $Q_{p,A,V}(u)=0$ in $\Omega$.
	
	We claim that $v$ is  a positive solution in $\Omega$ of minimal growth in a neighbourhood of $\partial\hat{\Omega}\setminus\{\infty\}$. Indeed, let $K\subset \hat{\Omega}$ be a punctured neighborhood of $\infty$ with smooth boundary, and let $w$ be a positive supersolution of the equation $Q_{p,A,V}(u)=0$ in $\Gw'=\Gw \setminus K$, such that $v\leq w$ on $\partial K$. Then for any $\vge>0$ there exists $n_\vge$ such that $v_{x_n}\leq (1 +\vge)w$ on $\partial K$ for all $n\geq n_{\vge}$.
	
	Recall that by the construction of $v_{x_n}$,  for a fixed $n$ we have, $v_{x_n}=\lim_{k\to \infty} v_{n,k}$, where $v_{n,k}$ restricted to $\Gw_k\cap \Gw'$ is a positive solution of the equation $Q_{p,A,V}(u)=0$ which vanishes on $\partial \Gw_k\cap \Gw'$. Therefore, by the weak convergence principle, 
	$v_{x_n}\leq (1 +\vge)w $ in $\Gw_k\cap \Gw'$, for all $n\geq n_{\vge}$. and therefore,  $v\leq w +\vge$ in $\Gw'$. Since $\vge$ is arbitrarily small, we have $v\leq w$ in $\Gw'$. Thus, $v$ is a positive solution in $\Omega$ of minimal growth in a neighbourhood of $\partial\hat{\Omega}\setminus\{\zeta\}$. 
\end{proof}
We extend Conjecture~1.1 in \cite{frass_pinchover} to the more general setting considered in the present paper. Our main goal is to prove it under some further relatively mild assumptions.
\begin{conjecture}\label{main_conj}
	Assume that Equation \eqref{p_laplace equ_2} has a Fuchsian-type isolated singularity at $\zeta\in \pd \hat \Omega$ and admits a (global) positive solution. Then
	\begin{enumerate}
		\item[(i)] $\zeta$ is a regular point of equation \eqref{p_laplace equ_2}.
		\item[(ii)] Equation~\eqref{p_laplace equ_2} admits a unique (global) positive solution of minimal growth in a neighborhood of $\partial \hat{\Gw} \setminus \{\zeta\}$.
	\end{enumerate}
\end{conjecture}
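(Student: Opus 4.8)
We now sketch a plan to prove Conjecture~\ref{main_conj} under the additional mild hypotheses developed in Sections~\ref{sec_fuchs}--\ref{sec_spher}. Throughout we take $\zeta=0$; the case $\zeta=\infty$ is handled identically, the dilation $x\mapsto Rx$ with $R\to\infty$ playing the same role. Part~(ii) splits into \emph{existence}, which is already Proposition~\ref{minimum_growth1}, and \emph{uniqueness}, which we will deduce from part~(i); so the heart of the matter is (i): for any two positive solutions $u,v\in\mathcal{G}_\zeta$ defined near $\zeta$, the quotient $u/v$ has a limit in $[0,\infty]$ as $x\to\zeta$. The first ingredient is a \emph{uniform Harnack inequality near $\zeta$}. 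The Fuchsian bound \eqref{fuchsian_eqn} is designed precisely so that the dilated coefficients $A_R(x)=A(Rx)$ and $V_R(x)=R^pV(Rx)$ are, for $R$ near $\zeta$, uniformly elliptic and uniformly bounded in $M^q(p;\tilde{\mathcal{A}})$ on the \emph{fixed} annulus $\tilde{\mathcal{A}}=\mathcal{A}_R/R$ (cf.\ \eqref{eq_AR}--\eqref{eq_AR1}); applying the local Harnack inequality (Theorem~\ref{harnack_ineq}) to positive solutions of $Q_{p,A_R,V_R}(w)=0$ on $\tilde{\mathcal{A}}$ and undoing the scaling yields a constant $C_0$ \emph{independent of $R$} such that $\sup_{\mathcal{A}_R}w\le C_0\inf_{\mathcal{A}_R}w$ for every positive solution $w$ of \eqref{p_laplace equ_2} near $\zeta$. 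In particular $u/v$ has multiplicatively bounded oscillation on each sphere, $\sup_{|x|=R}(u/v)\le C_0^2\inf_{|x|=R}(u/v)$, so it suffices to prove that $\inf_{|x|=R}(u/v)$ has a limit in $[0,\infty]$ as $R\to\zeta$.

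The second ingredient is the \emph{limiting dilation process}. Given any sequence $R_n\to\zeta$, the uniform ellipticity and Morrey bounds let us pass, along a subsequence, to the limit \eqref{LDP}, with $Y=(\R^d)^*$. Normalising the rescalings $u(R_n\,\cdot\,)/u(R_n\hat e)$ and $v(R_n\,\cdot\,)/v(R_n\hat e)$ at a fixed $\hat e\in S_1$, the Harnack convergence principle (Proposition~\ref{covergence_principle}) gives, along a further subsequence, convergence in $C^\beta_{\loc}((\R^d)^*)$ to positive solutions $\tilde u,\tilde v$ of the limiting dilated equation \eqref{dilated_eqn} on $(\R^d)^*$; by Proposition~\ref{fuchsian_prop} this equation is itself Fuchsian at $0$ and at $\infty$. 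Passing the uniform Harnack inequality to the limit shows that $\tilde u/\tilde v$ is likewise Fuchsian-controlled --- bounded oscillation on spheres, at most power growth or decay at $0$ and at $\infty$ --- and it is normalised, $\tilde u(\hat e)/\tilde v(\hat e)=1$.

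The decisive, and most delicate, step is the \emph{rigidity of the limiting quotient}, and it is here that the additional hypotheses enter. When the dilation limit $\mathbb{A}$ is a constant symmetric positive definite matrix --- which holds, e.g., if $A$ is continuous up to $\zeta$, or in the elliptically symmetric setting of Section~\ref{sec_spher} --- the classification of positive $(p,\mathbb{A})$-harmonic functions in Section~\ref{sec_pA_harmonic}, combined with the Fuchsian control on $\mathbb{V}$ (which makes $\mathbb{V}$ essentially homogeneous of degree $-p$), gives a minimal-growth/removability dichotomy for positive solutions of \eqref{dilated_eqn} near $0$ and near $\infty$: each such solution is comparable, at each end, to one of two power-type extremal solutions. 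Hence $\tilde u/\tilde v$ has a limit in $[0,\infty]$ at $0$ and at $\infty$, and, under these hypotheses, the dilation limit $(\mathbb{A},\mathbb{V})$ --- hence the finite list of possible limiting behaviours --- does not depend on $\{R_n\}$. A connectedness/compactness argument then precludes oscillation of $\inf_{|x|=R}(u/v)$ as $R\to\zeta$: if it oscillated, along different dilation sequences $u$ (or $v$) would exhibit incompatible leading behaviour, contradicting the dichotomy. Therefore $\lim_{x\to\zeta}(u/v)$ exists in the wide sense, which is (i). This step is also the genuine obstacle to removing the hypotheses: for merely bounded measurable $A$ the dilation limit $\mathbb{A}$ may depend on the sequence, and even for constant $\mathbb{A}$ a general Fuchsian potential $\mathbb{V}$ need not produce the clean power-type asymptotics underlying the dichotomy, unless one restricts --- as in Sections~\ref{sec_weak_Fuchs}--\ref{sec_spher} --- to weak Fuchsian singularities or to the elliptically symmetric case, where the limiting problem is explicitly solvable or reduces to an ordinary differential equation.

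Finally, uniqueness in (ii) follows from (i) by a standard comparison argument. Let $u_\zeta$ be a global positive solution of minimal growth in a neighbourhood of $\partial\hat\Omega\setminus\{\zeta\}$ (Proposition~\ref{minimum_growth1}), and let $v$ be another one. By (i), $v/u_\zeta\to c$ as $x\to\zeta$ for some $c\in[0,\infty]$. If $c=0$, then for a sufficiently small relative neighbourhood $K\Subset\hat\Omega$ of $\zeta$ with $\Gamma:=\partial K\cap\Omega$ smooth we have $v\le\varepsilon u_\zeta$ on $\Gamma$ for every $\varepsilon>0$, and since $\varepsilon u_\zeta$ is a positive supersolution in $\Omega\setminus K$ the minimal-growth property of $v$ forces $v\le\varepsilon u_\zeta$ in $\Omega\setminus K$, and hence (using $v/u_\zeta\le\varepsilon$ also on $\Omega\cap K$ for $K$ small) in all of $\Omega$; letting $\varepsilon\to0$ gives $v\equiv0$, a contradiction, so $c>0$. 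The symmetric argument applied to $u_\zeta$ rules out $c=\infty$, so $c\in(0,\infty)$. Repeating the comparison with $\varepsilon$ replaced by $c+\varepsilon$ (resp.\ $1/c+\varepsilon$) gives $v\le c\,u_\zeta$ and $v\ge c\,u_\zeta$ in $\Omega$, whence $v=c\,u_\zeta$. Thus the global positive solution of minimal growth near $\partial\hat\Omega\setminus\{\zeta\}$ is unique up to a positive multiplicative constant, which is (ii).
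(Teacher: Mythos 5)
Note first that the target statement is a \emph{conjecture}, not a theorem: the paper does not prove it in full generality but only under additional hypotheses (weak Fuchsian singularity in Theorem~\ref{liouville_thm}; elliptic symmetry in Section~\ref{sec_spher}). You correctly acknowledge this, and your high-level architecture (existence from Proposition~\ref{minimum_growth1}; uniqueness deduced from regularity; regularity by a uniform Harnack inequality plus a limiting dilation argument) does match the paper's strategy.

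However, your treatment of the decisive ``rigidity'' step has a genuine gap. You propose a ``connectedness/compactness argument'' to rule out oscillation of $\inf_{|x|=R}(u/v)$, asserting that oscillation along different dilation sequences would produce ``incompatible leading behaviour.'' This is not the paper's mechanism and, as stated, does not close: without more structure, nothing prevents $\inf_{S_R}(u/v)$ from oscillating between two values with both subsequential dilation limits giving legitimate positive solutions of the same limiting equation. The paper's actual argument hinges on Lemma~\ref{lemma_1}, which uses the weak comparison principle to show that $m_r=\inf_{S_r}(u/v)$ and $M_r=\sup_{S_r}(u/v)$ are \emph{finally monotone} in $r$, so the limits $m,M$ exist unconditionally; Proposition~\ref{regularity_prop} then shows the dilation limits $u_\infty, v_\infty$ satisfy $\sup_{S_R}(u_\infty/v_\infty)=M$ and $\inf_{S_R}(u_\infty/v_\infty)=m$ for \emph{every} $R>0$, and regularity of the limiting equation at one end forces $M=m$. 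You neither invoke Lemma~\ref{lemma_1} nor establish its monotonicity by other means, so your rigidity step would fail. A second, related inaccuracy: you assert a ``minimal-growth/removability dichotomy'' for positive solutions of the limiting dilated equation \eqref{dilated_eqn} ``near $0$ and near $\infty$,'' but the paper establishes such a classification only for the pure $(p,\mathbb{A})$-Laplacian (Section~\ref{sec_pA_harmonic}), not for $-\Delta_{p,\mathbb{A}}+\mathbb{V}|w|^{p-2}w$ with nonzero Fuchsian $\mathbb{V}$. The paper circumvents this by \emph{iterating} the dilation process (the weak Fuchsian hypothesis and the reverse induction in Theorem~\ref{liouville_thm}) or by using the strong comparison principle on an annulus free of critical points (Section~\ref{sec_spher}); neither of these appears in your sketch. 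Your extra claim that the dilation limit $(\mathbb{A},\mathbb{V})$ is independent of $\{R_n\}$ is also not needed by the paper and would be hard to justify. Finally, your uniqueness argument is essentially correct, but the paper's version via Corollary~\ref{cor_1} (which gives $mv\le u\le Mv$ with $0<m\le M<\infty$ directly) is considerably shorter and avoids the case analysis on $c\in\{0,\infty\}$.
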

Next, we recall the notions of subcriticality and criticality (for more details see \cite{pinchover_psaradakis}). 
\begin{defin}\label{def:critical}\em{  Assume that $\mathcal{Q}_{p,A,V} \geq 0$ in $\Gw$. Then $\mathcal{Q}_{p,A,V}$ is called {\em subcritical} in $\Gw$ if there exists a nonzero nonnegative function $W\in  M^q_{\rm loc}(p;\Gw)$ such that
	\be\label{ineq:subcriticality}
	\mathcal{Q}_{p,A,V}(\gf)
	\geq
	\int_\Gw  W|\gf|^p\dx\qquad \mbox{for all } \gf\in C_c^\infty(\Gw).
	\ee
	If this is not the case, then $\mathcal{Q}_{p,A,V}$ is called {\em critical} in $\Gw$.}
\end{defin}
\begin{theorem}[\cite{pinchover_psaradakis}]\label{theorem:main} Let $\mathcal{Q}_{p,A,V}\geq 0$ in $\Gw$. Then the following assertions are equivalent: 
	\begin{itemize}
		\item[(i)] $\mathcal{Q}_{p,A,V}$ is critical in $\Gw$.
		\item[(ii)] The equation $Q_{p,A,V}(u)=0$  in $\Gw$ admits a unique positive supersolution.
		\item[(iii)] The equation $Q_{p,A,V}(u)=0$  in $\Gw$ admits a ground state $\phi$.
	\end{itemize}
\end{theorem}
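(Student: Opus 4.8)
The statement is the standard characterization of criticality, and I would prove it by the cycle $(i)\Rightarrow(iii)\Rightarrow(ii)\Rightarrow(i)$, using as a bridge the notion of a \emph{null-sequence} for $\mathcal{Q}_{p,A,V}$: a sequence $\{\varphi_k\}\subset C_c^\infty(\Omega)$ of nonnegative functions, normalized by $\int_\Omega W_0|\varphi_k|^p\dx=1$ for a fixed $0\le W_0\in C_c^\infty(\Omega)$, $W_0\not\equiv0$ whose support sits in a small fixed ball around $x_0$, such that $\mathcal{Q}_{p,A,V}(\varphi_k)\to0$. The only tools needed besides the Allegretto--Piepenbrink theorem, the Harnack convergence principle (Proposition~\ref{covergence_principle}), the weak comparison principle (Theorem~\ref{weak_comparison}), the Morrey--Adams theorem (Theorem~\ref{theorem:uncertainty}) and Dirichlet solvability (Lemma~\ref{lem_Dirichlet}), is a Picone-type identity for $\Delta_{p,A}$ (cf.\ \cite{PT,PR,pinchover_psaradakis}).

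\emph{$(i)\Rightarrow(iii)$.} I would first show criticality forces a null-sequence to exist. For a compact exhaustion $\{\Omega_k\}$ with $\supp W_0\subset\Omega_1$, set $\mu_k:=\inf\{\mathcal{Q}_{p,A,V}(\varphi):\varphi\in C_c^\infty(\Omega_k),\ \int W_0|\varphi|^p\dx=1\}$. The sequence $(\mu_k)$ is non-increasing and nonnegative, and if its limit $\mu_\infty$ were positive we would get $\mathcal{Q}_{p,A,V}(\varphi)\ge\mu_\infty\int W_0|\varphi|^p\dx$ for all $\varphi\in C_c^\infty(\Omega)$, contradicting criticality; hence $\mu_\infty=0$ and near-minimizers form a null-sequence. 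Using Theorem~\ref{theorem:uncertainty} together with $\mathcal{Q}_{p,A,V}\ge0$ one checks that $\mu_k$ is attained by a nonnegative $\phi_k\in W_0^{1,p}(\Omega_k)$ solving $Q_{p,A,V}(\phi_k)=\mu_kW_0\phi_k^{p-1}$ in $\Omega_k$; by Harnack $\phi_k(x_0)\asymp1$, so after rescaling the Harnack convergence principle (with the fixed matrix $A$ and with $V-\mu_kW_0\to V$ in $M^q_{\loc}(p;\Omega)$) gives, along a subsequence, $\phi_k\to\phi_0$ locally uniformly, $\phi_0$ a positive solution of $Q_{p,A,V}(u)=0$ in $\Omega$. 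Finally $\phi_0$ has minimal growth at infinity: given a smooth compact $K\supset\supp W_0$ and a positive supersolution $v$ of $Q_{p,A,V}(u)=0$ in $\Omega\setminus K$ with $\phi_0\le v$ on $\partial K$, the $\phi_k$ solve $Q_{p,A,V}(u)=0$ in $\Omega_k\setminus K$ and vanish on $\partial\Omega_k$, so Theorem~\ref{weak_comparison} on $\Omega_k\setminus K$ yields $\phi_k\le(1+\varepsilon)v$ there for $k$ large; letting $k\to\infty$ and then $\varepsilon\to0$ gives $\phi_0\le v$ in $\Omega\setminus K$. Thus $\phi_0$ is a ground state.

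\emph{$(iii)\Rightarrow(ii)$.} Let $v$ be any positive supersolution of $Q_{p,A,V}(u)=0$ in $\Omega$. Combining the Picone-type identity for $\Delta_{p,A}$ (via the substitution $\varphi=v\psi$) with the supersolution inequality for $v$ gives
$$\mathcal{Q}_{p,A,V}(\varphi)\ \ge\ \mathcal{R}_v(\varphi/v)\ \ge\ 0\qquad\text{for all }0\le\varphi\in C_c^\infty(\Omega),$$
where $\mathcal{R}_v\ge0$ is the Picone remainder, vanishing exactly when its argument is (locally) constant. On the other hand, since a ground state $\phi_0$ exists, a cutoff argument exploiting its minimal growth at infinity produces a null-sequence $\{\varphi_k\}$ with $\varphi_k\to\phi_0$ locally uniformly (the statement that a ground state is a \emph{null-state}; this in particular reproves $(iii)\Rightarrow(i)$). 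Inserting this null-sequence into the displayed inequality forces $\mathcal{R}_v(\varphi_k/v)\to0$, and a compactness/rigidity argument then shows $\varphi_k/v$ converges to a constant, whence $\phi_0/v$ is constant. Therefore every positive supersolution is a positive constant multiple of $\phi_0$, i.e.\ the positive supersolution is unique.

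\emph{$(ii)\Rightarrow(i)$, and the crux.} Arguing by contraposition, suppose $\mathcal{Q}_{p,A,V}$ is subcritical with witness $0\le W\in M^q_{\loc}(p;\Omega)$, $W\not\equiv0$, so that $\mathcal{Q}_{p,A,V-W}\ge0$. By Allegretto--Piepenbrink, $Q_{p,A,V-W}(u)=0$ has a positive (super)solution $\psi$; then $Q_{p,A,V}(\psi)=Q_{p,A,V-W}(\psi)+W\psi^{p-1}\gneq0$, so $\psi$ is a positive supersolution of $Q_{p,A,V}(u)=0$ which is not a solution, hence is not proportional to the global positive solution of $Q_{p,A,V}(u)=0$ furnished by $\mathcal{Q}_{p,A,V}\ge0$; thus $(ii)$ fails. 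The genuinely delicate step is $(iii)\Rightarrow(ii)$: because the equation is only half-linear, the difference of a positive supersolution and a multiple of a positive solution need not be a supersolution, so the classical linear argument collapses; its replacement is the Picone-type identity together with the rigidity that $\mathcal{R}_v$ vanishes only on constants and the realization of the ground state as the limit of a null-sequence. A secondary technical nuisance is the attainment of the minimizers $\phi_k$ and the monotone convergence $\mu_k\downarrow0$ in the construction of the null-sequence.
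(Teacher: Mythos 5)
The paper does not prove Theorem~\ref{theorem:main}; it is quoted verbatim from \cite{pinchover_psaradakis}, where it is the culmination of the criticality theory built there on the earlier framework of \cite{PT,PR}. So there is no in-paper proof to compare against, and your proposal has to be assessed as a reconstruction of the argument in the cited source, which it largely is: the generalized Picone identity, null-sequences, constrained minimisation on a compact exhaustion with Morrey--Adams supplying coercivity, the Harnack convergence principle to pass to a global limit, and the WCP to verify minimal growth are exactly the tools used there. The implications $(i)\Rightarrow(iii)$ and $(ii)\Rightarrow(i)$ are sound as sketched; in the first, the quiet point that makes the minimisers $\phi_k$ exist is that $\mu_k>0$ strictly on each $\Omega_k\Subset\Omega$ (the functional is subcritical on a relatively compact subdomain), and you implicitly rely on this.

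The step where you are too quick, and which you correctly identify as the crux, is $(iii)\Rightarrow(ii)$, specifically the assertion that a positive solution $\phi_0$ of minimal growth (the paper's \emph{definition} of ground state) admits a null-sequence ``by a cutoff argument exploiting its minimal growth.'' If ``cutoff'' is read as $\varphi_k=\phi_0\chi_k$ with arbitrary smooth $\chi_k$, the claim is simply false: already for $-\Delta$ on $\R^2$ with $\phi_0\equiv1$, linear cutoffs give $\int|\nabla\chi_k|^2\dx\asymp1$, and one must use logarithmic cutoffs, which are precisely the capacitary/Dirichlet profiles. What actually works, and what matches the cited references, is to take $u_k$ solving $Q_{p,A,V}(u)=0$ in $\Omega_k\setminus K$ with $u_k=\phi_0$ on $\partial K$ and $u_k=0$ on $\partial\Omega_k$, use minimal growth together with the WCP to show $u_k\nearrow\phi_0$, set $\varphi_k:=\phi_0\1_K+u_k\1_{\Omega_k\setminus K}\in W^{1,p}_0(\Omega)$, and then show $\mathcal{Q}(\varphi_k)\to 0$ by reducing it to the vanishing of a flux term on $\partial K$ (or, equivalently, to the vanishing of the Picone Lagrangian on $\Omega_k\setminus K$). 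That computation is where the real work lies, and with $V$ only in the local Morrey class it requires some care with co-normal traces. So the idea is right but the phrase ``cutoff argument'' hides the substantive content of the implication; in \cite{PT,PR,pinchover_psaradakis} this is handled by organising the equivalences around null-sequences from the start rather than trying to manufacture a null-sequence from the minimal-growth definition on the fly. Modulo that caveat, and the standard lower-semicontinuity/rigidity of the Picone Lagrangian that you invoke (correct, since the Lagrangian is convex in the gradient slot), your cycle closes.
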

The following uniform Harnack inequality near an isolated Fuchsian singular point $\gz$ is a key ingredient for obtaining regularity results at  $\gz$. 
\begin{theo}[Uniform Harnack inequality]\label{uniform_harnack}
	Let  $A$  and $V$ satisfy Assumptions~\ref{assump}, and assume that $Q=Q_{p,A,V}$  has an isolated Fuchsian singularity at $\zeta  \in \partial\hat{\Omega}$, where $\zeta =0$ or $\zeta=\infty$. Let $u, v\in \mathcal{G}_\zeta$. Consider the annular set $\mathcal{A}_r:= (B_{3r/2}\setminus\bar{B}_{r/2})\cap \Omega'$, where $\Gw'$ is a punctured neighbourhood of $\gz$. Denote
	\vspace{-3mm} 
	$$\mathbf{a}_r:= \underset{x\in \mathcal{A}_r}{\inf} \frac{u(x)}{v(x)}\,,\hspace*{1cm}\mathbf{A}_r:=\underset{x\in \mathcal{A}_r}{\sup} \frac{u(x)}{v(x)}\,.$$
	Then there exists $C>0$ independent of $r$,  $u$ and $v$ such that 
	$$ \mathbf{A}_r\leq C\mathbf{a}_r\qquad \forall \,  r \mbox{ near } \zeta.$$
\end{theo}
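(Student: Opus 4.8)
## Proof plan for the Uniform Harnack inequality

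The plan is to exploit the scaling (quasi-)invariance of Fuchsian equations together with the Harnack convergence principle (Proposition~\ref{covergence_principle}), via a compactness/contradiction argument. First I would reduce the statement to a uniform comparison of two positive solutions on a \emph{fixed} annulus after rescaling. Given $r$ near $\zeta$, set $\tilde u_r(x):=u(rx)$ and $\tilde v_r(x):=v(rx)$ for $x$ in the fixed annular region $\tilde{\mathcal A}:=(B_{3/2}\setminus\bar B_{1/2})$; these are positive solutions in $\tilde{\mathcal A}$ of the scaled equation $Q_{p,A_r,V_r}(w)=0$, where $A_r(x)=A(rx)$ and $V_r(x)=R^pV(rx)$ are the dilated coefficients introduced before Proposition~\ref{fuchsian_prop}. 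The Fuchsian hypothesis \eqref{fuchsian_eqn} is exactly what makes $\|V_r\|_{M^q(p;\tilde{\mathcal A})}\le C$ uniformly in $r$ (see \eqref{eq_AR}, \eqref{eq_AR1}), and \eqref{ell} makes $A_r$ uniformly elliptic and bounded on $\tilde{\mathcal A}$. Note that $\mathbf{a}_r=\inf_{\tilde{\mathcal A}}\tilde u_r/\tilde v_r$ and $\mathbf{A}_r=\sup_{\tilde{\mathcal A}}\tilde u_r/\tilde v_r$, so the claim is scale-invariant and we must bound $\sup(\tilde u_r/\tilde v_r)\le C\inf(\tilde u_r/\tilde v_r)$ with $C$ independent of $r$ (and of $u,v$).

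Next I would normalize: pick a reference point $x_0\in\tilde{\mathcal A}$ (say $|x_0|=1$) and replace $\tilde v_r$ by $\tilde v_r/\tilde v_r(x_0)$ and $\tilde u_r$ by $\tilde u_r/\tilde u_r(x_0)$, so both are positive solutions of $Q_{p,A_r,V_r}(w)=0$ equal to $1$ at $x_0$. By the local Harnack inequality (Theorem~\ref{harnack_ineq}), applied on a slightly larger annulus with uniform constants, each normalized solution satisfies a two-sided bound $C^{-1}\le \tilde u_r,\tilde v_r\le C$ on $\tilde{\mathcal A}$ with $C$ independent of $r$; in particular the quotient $\tilde u_r/\tilde v_r$ is bounded above and below by constants depending only on the data — but this alone does not give the \emph{ratio} bound $\mathbf{A}_r\le C\mathbf{a}_r$, since a priori $\mathbf{a}_r$ could be tiny. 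So I would argue by contradiction: suppose there is a sequence $r_n\to\zeta$ with $\mathbf{A}_{r_n}/\mathbf{a}_{r_n}\to\infty$, with corresponding normalized solution pairs $u_n:=\tilde u_{r_n}$, $v_n:=\tilde v_{r_n}$ solving $Q_{p,A_{r_n},V_{r_n}}(w)=0$ on $\tilde{\mathcal A}$ and equal to $1$ at $x_0$.

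Now apply the Harnack convergence principle: extracting subsequences, $A_{r_n}\rightharpoonup\mathbb A$ weakly in $L^\infty_{\loc}$, $V_{r_n}\rightharpoonup\mathbb V$ weakly in $M^q_{\loc}(p;\cdot)$ (the $M^q$-balls being weak-$\ast$ sequentially compact), and $u_n\to u_\infty$, $v_n\to v_\infty$ in $C^\beta_{\loc}$ to positive solutions of the limiting dilated equation $Q_{p,\mathbb A,\mathbb V}(w)=0$ on (an open subset of) $Y=(\R^d)^*$, with $u_\infty(x_0)=v_\infty(x_0)=1$. Since $u_n/v_n\to u_\infty/v_\infty$ uniformly on the compact set $\overline{\tilde{\mathcal A}}$, we get $\mathbf{A}_{r_n}\to\sup_{\tilde{\mathcal A}}(u_\infty/v_\infty)=:M<\infty$ and $\mathbf{a}_{r_n}\to\inf_{\tilde{\mathcal A}}(u_\infty/v_\infty)=:m$. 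Here the key point is that $m>0$: indeed $v_\infty>0$ is continuous and $u_\infty>0$ is continuous on $\overline{\tilde{\mathcal A}}$ (each being a positive solution, bounded below by the local Harnack inequality for the limit equation), so $u_\infty/v_\infty$ attains a positive minimum on the compact annulus. Hence $\mathbf{A}_{r_n}/\mathbf{a}_{r_n}\to M/m<\infty$, contradicting the assumption. Therefore $\sup_r \mathbf{A}_r/\mathbf{a}_r<\infty$, which is the desired estimate.

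The main obstacle is making the compactness step clean: one must check that the Harnack convergence principle genuinely applies on the \emph{annulus} rather than on a full domain exhausted by the $\Omega_i$ — i.e.\ localize Proposition~\ref{covergence_principle} to a fixed open set $\tilde{\mathcal A}'\Supset\tilde{\mathcal A}$ on which, for $r$ near $\zeta$, the rescaled equations are all defined (this is where ``$\zeta$ is an \emph{isolated} component of $\partial\hat\Omega$'' is used, ensuring $\mathcal A_r/r$ stabilizes to a fixed annulus and $Y=(\R^d)^*$) and the uniform ellipticity and Morrey bounds hold — and to verify that weak-$\ast$ compactness of the dilated potentials in the local Morrey space is available, which it is since the Fuchsian condition gives a uniform $M^q(p;\tilde{\mathcal A}')$ bound. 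A secondary point requiring care is that the limit solutions $u_\infty,v_\infty$ are strictly positive and continuous up to $\partial\tilde{\mathcal A}$, so that their ratio has a genuine positive infimum over the closed annulus; this follows from the local Harnack inequality and interior Hölder regularity applied to the limit equation $Q_{p,\mathbb A,\mathbb V}(w)=0$, whose coefficients again satisfy Assumptions~\ref{assump} on a neighborhood of $\overline{\tilde{\mathcal A}}$.
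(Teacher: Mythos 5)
Your compactness/contradiction argument is correct in outline, but it is substantially more elaborate than the paper's proof, and the reason you give for abandoning the direct route is actually mistaken. You write that the two-sided bound ``alone does not give the ratio bound $\mathbf{A}_r\le C\mathbf{a}_r$, since a priori $\mathbf{a}_r$ could be tiny.'' In fact it does give the ratio bound. The quantity $\mathbf{A}_r/\mathbf{a}_r$ is invariant under multiplying $u_r$ and $v_r$ by separate constants, so one may apply the local Harnack inequality (Theorem~\ref{harnack_ineq}) on the fixed outer annulus $\tilde{\mathcal A}$ (with $\mathcal{A}\Subset\tilde{\mathcal{A}}$) to $u_r$ and $v_r$ \emph{separately}, obtaining $\sup_{\mathcal A}u_r\le C\inf_{\mathcal A}u_r$ and $\sup_{\mathcal A}v_r\le C\inf_{\mathcal A}v_r$ with $C$ independent of $r$ (thanks to the uniform ellipticity \eqref{ell} and the uniform Morrey bounds \eqref{eq_AR}, \eqref{eq_AR1} guaranteed by the Fuchsian hypothesis). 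Then
\[
\frac{\sup_{\mathcal A}(u_r/v_r)}{\inf_{\mathcal A}(u_r/v_r)}\le\frac{\sup_{\mathcal A}u_r}{\inf_{\mathcal A}v_r}\cdot\frac{\sup_{\mathcal A}v_r}{\inf_{\mathcal A}u_r}=\frac{\sup_{\mathcal A}u_r}{\inf_{\mathcal A}u_r}\cdot\frac{\sup_{\mathcal A}v_r}{\inf_{\mathcal A}v_r}\le C^2,
\]
and this is exactly the paper's one-line conclusion after the rescaling step.

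Your alternative route — normalize at a reference point, extract subsequential weak limits $\mathbb A$, $\mathbb V$ via the Harnack convergence principle, pass to $u_\infty,v_\infty$, and derive a contradiction — would work, but it imports machinery that this lemma does not need: it requires weak-$\ast$ compactness of the dilated coefficients, a localized version of Proposition~\ref{covergence_principle} on the fixed annulus (which you correctly flag as a point needing justification), and strict positivity of the limit functions up to the boundary of $\overline{\tilde{\mathcal A}}$. The advantage of your argument is that it is robust and generalizes easily; the disadvantage is that it obscures the fact that the estimate is a \emph{pointwise-in-$r$} consequence of the local Harnack inequality with a constant that is uniform because the coefficient bounds are uniform — no limiting process is necessary. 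Since the uniform Harnack inequality is in turn invoked in the proof of the Harnack convergence-based results later in the paper, keeping its proof elementary is also structurally cleaner.
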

\begin{proof}
	Fix positive solutions $u$ and $v$ in $\Gw'\subset \Gw$, a fixed punctured neighbourhood of $\gz$. For $r>0$, let us consider the annular set $\tilde{\mathcal{A}}_r:= (B_{2r}\setminus\bar{B}_{r/4})\cap \Omega'$. Since $\zeta=0$ (respectively, $\zeta=\infty$) is an isolated singular point, then for $r<r_0$ (respectively, $r>r_0$) $\mathcal{A}_r/r$ and $\tilde{\mathcal{A}}_r/r$ are fixed annulus $\mathcal{A}$ and $\tilde{\mathcal{A}}$ respectively and $\mathcal{A}\Subset\tilde{\mathcal{A}}$.\\
	Now for such $r$, we define $u_r(x):=u(rx)$ for $x\in\Omega'/r$. Then the function $u_r$ is a positive solution of the equation 
	\begin{equation}
	Q_r[u_r]:=-\Delta_{p,A_r}(u_r)+ V_r(x)|u_r|^{p-2}u_r=0 \,\,\text{in}\,\,\tilde{\mathcal{A}},
	\end{equation}
	where $A_r(x):= A(rx)$ and $V_r=r^{p}V(rx)$. Similarly, $v_r(x):=v(rx)$ for $x\in \Omega'/r$ satisfies $Q_r[v_r]=0$ in $\tilde{\mathcal{A}}$. In light of  estimates \eqref{eq_AR} and \eqref{eq_AR1}, the norms $\|V_r\|_{M^q(p;\tilde{\mathcal{A}})}$ of the scaled potentials are uniformly bounded $\tilde{\mathcal{A}}$. Also, by \eqref{ell},  the matrices $A_r(x)$ are uniformly bounded and uniformly elliptic in $\tilde{\mathcal{A}}$. Therefore, the local Harnack inequality (Theorem~\ref{harnack_ineq}) in the annular domain $\tilde{\mathcal{A}}$ implies 
	\vspace{-3mm}
	$$\mathbf{A}_r=\underset{x\in \mathcal{A}_r}{\sup} \frac{u(x)}{v(x)}=\underset{x\in \mathcal{A}}{\sup} \frac{u_r(x)}{v_r(x)}\leq C\underset{x\in \mathcal{A}}{\inf} \frac{u_r(x)}{v_r(x)}=C\underset{x\in \mathcal{A}_r}{\inf} \frac{u(x)}{v(x)}=C \mathbf{a}_r,$$
	where the constant $C$ is independent of $u$ and $v$ for $r$ near $\gz$. 
\end{proof}
The weak comparison principle (Theorem~\ref{weak_comparison}) implies the following monotonicity useful result:
\begin{lem}\label{lemma_1}
	Let  $A$  and $V$ satisfy Assumptions~\ref{assump},  and assume that $u, v\in \mathcal{G}_\zeta$ are defined in a punctured neighbourhood $\Omega'$ of $\zeta$. For $r>0$, denote
	\be\label{eq_mM}
	m_r:=\underset{S_r\cap \Omega'}{\inf}\,\frac{u(x)}{v(x)}\,,\hspace*{0.5cm}M_r:=\underset{S_r\cap \Omega'}{\sup}\,\frac{u(x)}{v(x)}\,.
	\ee
	\begin{itemize}
		\item[(i)] The functions $m_r$ and $M_r$ are finally monotone as $r\rightarrow\zeta$. Specifically, there are numbers $0\leq m\leq M\leq \infty$ such that 
		\be\label{eq_lim_mM}
		m:=\underset{r\rightarrow\zeta}{\lim}\, m_r, \quad \mbox{and} \quad M:=\underset{r\rightarrow\zeta}{\lim}\, M_r.
		\ee
		\item[(ii)] Suppose further that $u$ and $v$ are both positive solutions of \eqref{p_laplace equ_2} in $\Omega$ of minimal growth in $\partial \hat{\Gw}\setminus\{\zeta\}$, then $0<m\leq M<\infty$ and $m_r\searrow m$ and $M_r\nearrow M$ when $r\rightarrow\zeta$.
	\end{itemize}
\end{lem}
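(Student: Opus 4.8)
The plan is to carry out the argument for $\zeta=0$; the case $\zeta=\infty$ is entirely analogous after interchanging the roles of small and large $r$. Since $\zeta$ is an isolated component of $\partial\hat\Omega$, we may assume $\Omega'=B_{R_0}\setminus\{0\}$ for some $R_0>0$, so that for $0<r<R_0$ the set $S_r\cap\Omega'$ is the full sphere $S_r\Subset\Omega$; by the local H\"older continuity of positive solutions, $u/v$ is continuous and positive on $\Omega'$, hence $r\mapsto m_r$ and $r\mapsto M_r$ are finite, positive and continuous on $(0,R_0)$.

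For (i), fix $0<r_1<r_2<R_0$ and set $\omega:=\{r_1<|x|<r_2\}\Subset\Omega$, a bounded Lipschitz domain. Put $M:=\max(M_{r_1},M_{r_2})$; by homogeneity $Q_{p,A,V}(Mv)=M^{p-1}Q_{p,A,V}(v)=0$, so $Mv$ is a positive solution of $Q_{p,A,V}(w)=0$ in $\omega$, $Mv>0$ on $\partial\omega$, and $u\le Mv$ on $\partial\omega=S_{r_1}\cup S_{r_2}$; since $v$ is a positive solution in $\Omega'\supset\overline\omega$, the weak comparison principle (Theorem~\ref{weak_comparison}) gives $u\le Mv$ in $\omega$, i.e. $M_r\le\max(M_{r_1},M_{r_2})$ for every $r\in[r_1,r_2]$. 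Thus $r\mapsto M_r$ is continuous and quasiconvex on $(0,R_0)$, hence unimodal (non-increasing then non-decreasing), so it is eventually monotone as $r\to0$ and $M:=\lim_{r\to0}M_r$ exists in $[0,\infty]$. Comparing from below, now with $u_1=mv$ and $u_2=u$ where $m:=\min(m_{r_1},m_{r_2})$, the weak comparison principle yields $m_r\ge\min(m_{r_1},m_{r_2})$ on $[r_1,r_2]$; hence $r\mapsto m_r$ is continuous and quasiconcave, so eventually monotone, and $m:=\lim_{r\to0}m_r$ exists in $[0,\infty]$. Finally $m\le M$ since $m_r\le M_r$ for all $r$.

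For (ii) we use that $u$ and $v$ are global positive solutions of minimal growth in a neighbourhood of $\partial\hat\Omega\setminus\{0\}$. Fix $0<r<R_0$ and let $K:=\overline B_r\cap\hat\Omega$, a relative neighbourhood of $0$ with smooth boundary portion $\Gamma:=\partial K\cap\Omega=S_r$ and $\Omega\setminus K=\{|x|>r\}\cap\Omega$. Then $M_rv$ is a positive supersolution of $Q_{p,A,V}(w)=0$ in $\Omega\setminus K$, continuous up to $\Gamma$, with $u\le M_rv$ on $\Gamma$; the minimal growth of $u$ forces $u\le M_rv$, i.e. $u/v\le M_r$, throughout $\{|x|>r\}\cap\Omega$, so $M_{r'}\le M_r$ whenever $r<r'<R_0$. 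Hence $r\mapsto M_r$ is non-increasing on $(0,R_0)$, so $M_r\nearrow M:=\sup_{0<r<R_0}M_r$ as $r\to0$. Symmetrically, $(1/m_r)u$ is a positive supersolution of the equation in $\Omega\setminus K$ with $v\le(1/m_r)u$ on $\Gamma$, so the minimal growth of $v$ gives $u/v\ge m_r$ in $\{|x|>r\}\cap\Omega$ and thus $m_{r'}\ge m_r$ for $r<r'<R_0$; hence $r\mapsto m_r$ is non-decreasing and $m_r\searrow m:=\inf_{0<r<R_0}m_r$ as $r\to0$. It remains to show $0<m\le M<\infty$: since $S_r\subset\mathcal{A}_r$, the uniform Harnack inequality (Theorem~\ref{uniform_harnack}) provides $C>0$, independent of $r$ near $0$, with $M_r\le\mathbf{A}_r\le C\mathbf{a}_r\le Cm_r$; letting $r\to0$ gives $M\le Cm$, and since $0<M_{r_0}\le M$ and $m\le m_{r_0}<\infty$ for any fixed $r_0\in(0,R_0)$, we obtain $0<m\le M<\infty$.

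The bookkeeping with continuity and quasiconvexity/quasiconcavity in (i), and the checking of the hypotheses of the comparison and minimal-growth principles, are routine. The point that requires care is the direction of the monotonicity in (ii): one must compare $u$ against the supersolution $M_rv$ to bound $M_r$, but $v$ against the supersolution $(1/m_r)u$ to bound $m_r$, always on the component $\Omega\setminus K$; and in the case $\zeta=\infty$ one has instead $K=(\{|x|\ge r\}\cup\{\infty\})\cap\hat\Omega$ and $\Omega\setminus K=\{|x|<r\}\cap\Omega$, so $r\mapsto M_r$ becomes non-decreasing and $r\mapsto m_r$ non-increasing, while the conclusions $M_r\nearrow M$ and $m_r\searrow m$ as $r\to\zeta$ are unchanged.
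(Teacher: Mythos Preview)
Your proof is correct and follows the same approach as the paper (which defers to \cite[Lemma~4.2]{frass_pinchover}): the weak comparison principle on annuli gives the quasiconvexity/quasiconcavity and hence eventual monotonicity in (i), and the minimal-growth comparison on $\Omega\setminus K$ together with the uniform Harnack inequality yields the specific monotonicity direction and the bounds $0<m\le M<\infty$ in (ii). Note that your invocation of Theorem~\ref{uniform_harnack} tacitly uses the Fuchsian-singularity hypothesis, which Lemma~\ref{lemma_1} does not state explicitly but is understood from the context of Section~\ref{sec_fuchs} (and is in any case assumed in Corollary~\ref{cor_1}, the only place where the bounds $0<m$, $M<\infty$ are used).
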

The proof of Lemma~\ref{lemma_1} follows the  same steps as in  \cite[Lemma~4.2]{frass_pinchover} (where $A$ is the identity matrix and $V\in L^\infty_{\loc}(\Omega)$), and therefore it is omitted. 
\begin{remark}\label{mono_remrk}
	\em{
	Let $\mathbb A\!\in \!\mathbb{R}^{d\times d}$ be a symmetric, positive definite matrix. Then clearly, Lemma~\ref{lemma_1} also holds if the sphere $S_r$ is replaced by the set 
	$\partial E_{\mathbb A}(r)=\{x\in\mathbb{R}^d \mid |x|_{\mathbb A^{-1}}=r\}$. 
	 }
\end{remark}
The following result readily follows from the second part of Lemma~\ref{lemma_1}.
\begin{cor}\label{cor_1}
	Suppose that \eqref{p_laplace equ_2} has a Fuchsian isolated singularity at $\zeta \in\partial\hat{\Omega}$. Let $u, v$ be two positive solutions of \eqref{p_laplace equ_2} of minimal growth in a neighbourhood of $\partial\hat{\Omega}\setminus\{\zeta\}$. Then
	$$mv(x)\leq u(x)\leq M v(x)\quad x\in\Omega,$$
	where $0<m\leq M<\infty$ are defined in \eqref{eq_lim_mM}.
\end{cor}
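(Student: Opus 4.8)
The plan is to derive Corollary~\ref{cor_1} directly from part~(ii) of Lemma~\ref{lemma_1}, the point being that the monotone limits $m$ and $M$ of the spherical infima and suprema already control the quotient $u/v$ globally, not merely near $\zeta$. First I would recall that under the hypotheses of the corollary both $u$ and $v$ are positive solutions of \eqref{p_laplace equ_2} in all of $\Omega$ having minimal growth in a neighbourhood of $\partial\hat\Omega\setminus\{\zeta\}$; in particular they belong to $\mathcal G_\zeta$ when restricted to a punctured neighbourhood $\Omega'$ of $\zeta$, so Lemma~\ref{lemma_1}(ii) applies and yields $0<m\le M<\infty$ together with $m_r\searrow m$ and $M_r\nearrow M$ as $r\to\zeta$.

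The key observation is the monotonicity direction. Since $m_r$ decreases to $m$ as $r\to\zeta$, for every $r$ in the relevant range we have $m\le m_r$, that is, $m\le u(x)/v(x)$ for all $x\in S_r\cap\Omega'$; letting $r$ range over all radii for which $S_r\cap\Omega'\ne\emptyset$ exhausts $\Omega'$, so $u(x)\ge m\,v(x)$ on $\Omega'$. Likewise $M_r\nearrow M$ gives $u(x)/v(x)\le M_r\le M$ on each $S_r\cap\Omega'$, hence $u(x)\le M\,v(x)$ on $\Omega'$. This settles the claimed two-sided bound on the punctured neighbourhood $\Omega'$.

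It remains to upgrade the inequalities from $\Omega'$ to all of $\Omega$. Here I would use that $u$ and $v$ are genuine positive solutions on the whole domain $\Omega$ (not just on $\Omega'$), so on the relatively compact set $\Omega\setminus\Omega'$ the ratio $u/v$ is a continuous, strictly positive function — this uses the local Harnack inequality (Theorem~\ref{harnack_ineq}) and the local H\"older continuity of weak solutions, so that $u/v$ extends continuously and is bounded away from $0$ and $\infty$ on any compact subset of $\Omega$. One then invokes the weak comparison principle (Theorem~\ref{weak_comparison}): the functions $u$ and $Mv$ both solve (resp. super-solve) $Q_{p,A,V}=0$ on a Lipschitz subdomain containing $\Omega\setminus\Omega'$, and on the boundary of $\Omega'$ we already know $u\le Mv$, while in the neighbourhood-of-$\zeta$ direction the minimal-growth property of $u$ forces $u\le Mv$; comparing on the remaining bounded region propagates $u\le Mv$ everywhere, and symmetrically $mv\le u$. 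I expect the only genuinely delicate point to be the bookkeeping at the interface $\partial\Omega'$ and the correct invocation of the minimal-growth definition to close off the comparison — the monotone-limit input and the two-sided positivity are essentially immediate from Lemma~\ref{lemma_1}.
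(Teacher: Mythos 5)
Your first half is sound: applying Lemma~\ref{lemma_1}(ii) gives the global monotonicity $m_r\searrow m$, $M_r\nearrow M$, hence $m\le m_r\le u/v\le M_r\le M$ on each sphere $S_r\cap\Omega'$, and the union of these spheres is $\Omega'$. This is essentially the paper's one-line argument. In fact, since $u$ and $v$ are global solutions, nothing stops you from taking $\Omega'=\Omega$, in which case the spheres $S_r\cap\Omega$ already cover all of $\Omega$ and the corollary follows directly with no further step; the ``extension'' you set up in the second half is not actually needed.

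The second half contains genuine errors, however, and as written it would not close the gap you think exists. The set $\Omega\setminus\Omega'$ is not relatively compact in $\Omega$: its closure contains $\partial\Omega\setminus\{\zeta\}$, and if $\Omega$ is unbounded it is not even bounded. So the weak comparison principle (Theorem~\ref{weak_comparison}), which requires a Lipschitz domain $\omega\Subset\Omega$, cannot be invoked on ``a Lipschitz subdomain containing $\Omega\setminus\Omega'$.'' You also invert the role of minimal growth: minimal growth in a neighbourhood of $\partial\hat\Omega\setminus\{\zeta\}$ propagates an inequality \emph{outward} from a level surface $\Gamma=\partial K\cap\Omega$ near $\zeta$ to all of $\Omega\setminus K$; it is not a condition ``in the neighbourhood-of-$\zeta$ direction.'' If you do want to phrase the argument with a small $\Omega'$, the correct move is exactly to use minimal growth in place of WCP: fix a relative neighbourhood $K_r$ of $\zeta$ with $\partial K_r\cap\Omega=S_r\cap\Omega\subset\Omega'$; since $M_rv$ is a positive (super)solution and $u\le M_rv\le Mv$ on $S_r\cap\Omega$ by the sphere argument, minimal growth gives $u\le Mv$ in $\Omega\setminus K_r$; letting $r\to\zeta$ exhausts $\Omega$, and the lower bound is symmetric.
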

As in \cite{frass_pinchover}, the regularity at $\gz$ implies a positive Liouville-type theorem.
\begin{prop}\label{minimum_growth}
	Suppose that $Q_{p,A,V}$ has a regular and isolated  Fuchsian singularity at $\zeta\in\partial\hat{\Omega}$. Then equation \eqref{p_laplace equ_2} admits a unique positive solution in $\Omega$ of minimal growth in a neighbourhood of $\partial\hat{\Omega}\setminus\{\zeta\}$.
\end{prop}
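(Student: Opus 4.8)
The plan is to separate the statement into existence and uniqueness. Existence is immediate from what has already been proved: the standing assumption $\mathcal{Q}_{p,A,V}\ge 0$ in $\Gw$ together with the hypothesis that $Q_{p,A,V}$ has an isolated Fuchsian singularity at $\zeta$ puts us exactly in the setting of Proposition~\ref{minimum_growth1}, which furnishes a positive solution $u_0$ of \eqref{p_laplace equ_2} in $\Omega$ of minimal growth in a neighbourhood of $\partial\hat{\Omega}\setminus\{\zeta\}$. Hence the real content is uniqueness, which one can only expect up to a positive multiplicative constant since \eqref{p_laplace equ_2} is homogeneous (equivalently, genuine uniqueness after normalization at a fixed reference point $x_0\in\Gw$).

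For the uniqueness part I would take two positive solutions $u,v$ of \eqref{p_laplace equ_2} in $\Omega$, each of minimal growth in a neighbourhood of $\partial\hat{\Omega}\setminus\{\zeta\}$, fix a punctured neighbourhood $\Omega'$ of $\zeta$ where both are defined, and introduce $m_r,M_r$ as in \eqref{eq_mM} and $m,M$ as in \eqref{eq_lim_mM}. Corollary~\ref{cor_1} then gives at once $0<m\le M<\infty$ and the global two-sided bound $m v\le u\le M v$ in $\Omega$. Next I would bring in the hypothesis that $\zeta$ is a \emph{regular} point: the germs of $u$ and $v$ at $\zeta$ lie in $\mathcal{G}_\zeta$, so by definition $L:=\lim_{x\to\zeta,\,x\in\Omega'}u(x)/v(x)$ exists in the wide sense, and the bound $m\le u/v\le M$ valid near $\zeta$ forces $0<m\le L\le M<\infty$; in particular $L$ is a genuine finite positive number.

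It remains to identify $m=M=L$ and conclude. This is the only point that needs an argument beyond quoting earlier results, and it is routine: given $\eps>0$, there is a relative neighbourhood $U$ of $\zeta$ in $\hat{\Omega}$ with $|u(x)/v(x)-L|<\eps$ for all $x\in U\cap\Omega'$; since $S_r\cap\Omega'\subset U$ for all $r$ near $\zeta$, we get $L-\eps\le m_r\le M_r\le L+\eps$ there, and letting $\eps\to0$ together with \eqref{eq_lim_mM} gives $m=M=L$. Substituting into $m v\le u\le M v$ yields $u\equiv L v$ in $\Omega$, so $u$ and $v$ are proportional and, after normalizing at $x_0$, equal. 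I do not anticipate a genuine obstacle: the proposition is essentially a corollary of Corollary~\ref{cor_1} and of the very definition of a regular point, the only care being the elementary passage from pointwise convergence of the quotient $u/v$ at $\zeta$ to convergence of the spherical infimum and supremum $m_r,M_r$.
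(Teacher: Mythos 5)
Your proof is correct and follows essentially the same route as the paper's: existence is delegated to Proposition~\ref{minimum_growth1}, and uniqueness (up to a multiplicative constant) comes from combining Corollary~\ref{cor_1} with the definition of a regular point to force $m=M$. The only difference is that you spell out the elementary step showing the pointwise limit of $u/v$ pins down both $\lim m_r$ and $\lim M_r$, which the paper treats as immediate.
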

\begin{proof}
{\bf Existence:}  Follows from Proposition~\ref{minimum_growth1}.

{\bf Uniqueness:}	Let $u$ and $v$ be two solutions of  \eqref{p_laplace equ_2} of minimal growth in a neighbourhood of $\partial\hat{\Omega}\setminus\{\zeta\}$. Then by Corollary \ref{cor_1}, we have 
	$$ mv(x)\leq u(x)\leq M v(x)\quad x\in\Omega,$$
	where $0<m\leq M<\infty$ are defined in \eqref{eq_mM} and \eqref{eq_lim_mM}.
	In addition, since $\zeta$ is a regular point, it follows that
	\vspace{-3mm}  
	$$\underset{x\in\Omega}{\displaystyle{\lim_{x\rightarrow \zeta}}} \, \frac{u(x)}{v(x)}\quad\mbox{ exists and is positive}.$$
	Thus, we have $m=M$ and $u(x)=Mv(x)$.
\end{proof}
The following proposition asserts that the regularity of a Fuchsian singular point with respect to a limiting dilated equation implies the regularity of the corresponding singular point for the original equation \eqref{p_laplace equ_2}. The proposition  extends Proposition~2.2 in \cite{frass_pinchover}, where $A$ is the identity matrix and $V\!\in \! L^\infty_{\loc}(\Omega)$ satisfies \eqref{eq_pFs}. As in \cite{frass_pinchover}, the proof of below  relies upon  the Harnack convergence principle,  the WCP and the uniform Harnack inequality.
\begin{prop}\label{regularity_prop}
	Let   $A, V,$ satisfy Assumptions~\ref{assump}. Suppose that the operator $Q=Q_{p,A,V}$ has an isolated Fuchsian singularity at $\zeta\in\partial\hat{\Omega}$, and there is a sequence $R_n\rightarrow \zeta$, such that either $0$ or $\infty$ is a regular point of a limiting dilated equation $\mathcal{D}^{\{R_n\}}(Q)(w)=0$ in $\Gw$. Then $\zeta$ is a regular point of the equation $Q(u)=0$ in $\Omega$.
\end{prop}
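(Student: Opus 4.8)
The plan is to transfer regularity from the limiting dilated equation back to the original equation by a contradiction/compactness argument. Suppose $\zeta$ is \emph{not} a regular point of $Q(u)=0$ in $\Omega$. Then there exist two positive solutions $u,v\in\mathcal G_\zeta$ defined in a punctured neighbourhood $\Omega'$ of $\zeta$ for which the quotient $u/v$ does not have a limit in the wide sense as $x\to\zeta$. By Lemma~\ref{lemma_1}, the boundary quantities $m_r$ and $M_r$ from \eqref{eq_mM} are eventually monotone and have limits $0\le m\le M\le\infty$ as $r\to\zeta$; failure of regularity forces $m<M$ (the limit of $u/v$ in the wide sense exists precisely when $m=M$). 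The strategy is to show that $m<M$ is incompatible with $0$ (or $\infty$) being a regular point of $\mathcal D^{\{R_n\}}(Q)(w)=0$.

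First I would carry out the dilation. Set $u_n(x):=u(R_nx)/u(R_nx_0)$ and $v_n(x):=v(R_nx)/v(R_nx_0)$ for a fixed reference point $x_0$, so that $u_n$ and $v_n$ are normalized positive solutions of $Q_{R_n}[w]=0$ on $\Omega'/R_n$ (with scaled coefficients $A_{R_n}$, $V_{R_n}$). By the Fuchsian hypothesis and the uniform bounds \eqref{eq_AR}--\eqref{eq_AR1}, the coefficients satisfy the hypotheses of the Harnack convergence principle (Proposition~\ref{covergence_principle}); after passing to a subsequence (and using \eqref{LDP}) we get $u_n\to \tilde u$ and $v_n\to\tilde v$, locally uniformly and weakly in $W^{1,p}_{\loc}$, where $\tilde u,\tilde v$ are positive solutions of the limiting dilated equation $\mathcal D^{\{R_n\}}(Q)(w)=0$ on $Y=(\R^d)^*$. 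Moreover, by the \emph{uniform} Harnack inequality (Theorem~\ref{uniform_harnack}) the ratio $\mathbf A_r/\mathbf a_r$ is bounded by a constant $C$ independent of $r$; this is the engine that keeps the dilated quotients from degenerating. The key bookkeeping point is that the sphere-infimum/supremum of $\tilde u/\tilde v$ over $S_1$ (or over $\partial E_{\mathbb A}(1)$, cf. Remark~\ref{mono_remrk}) is, up to the scaling normalization, the limit of $m_{R_n}/$ (ratio at $x_0$) and $M_{R_n}/$ (ratio at $x_0$); since $m_r$ and $M_r$ have \emph{distinct} limits along the monotone sequence while the normalizing value $u(R_nx_0)/v(R_nx_0)$ lies between $m_{R_n}$ and $M_{R_n}$, one extracts from $\{u_n\}$ and $\{v_n\}$ two limiting solutions $\tilde u,\tilde v$ whose quotient is \emph{not} constant — indeed $\inf_{S_1}\tilde u/\tilde v<\sup_{S_1}\tilde u/\tilde v$ because the gap $M/m>1$ survives in the limit.

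Next I would invoke regularity of the limiting equation at $0$ (or $\infty$): since that point is regular, and $\tilde u,\tilde v$ are germs of positive solutions near it, the quotient $\tilde u/\tilde v$ has a limit in the wide sense at $0$ (resp.\ $\infty$). Applying Lemma~\ref{lemma_1} (and Remark~\ref{mono_remrk} if the limiting matrix $\mathbb A$ is non-identity) to $\tilde u/\tilde v$ on the \emph{limiting} equation, the eventual-monotonicity of its sphere-min and sphere-max together with the existence of a wide-sense limit at \emph{one} end, combined with the fact that $\tilde u/\tilde v$ is a solution-quotient on the whole punctured space $Y$, forces $\tilde u/\tilde v$ to be constant on $Y$ — this is the standard consequence of the dilation invariance of the limiting equation ($Y$ is scaling-invariant, so a wide-sense limit at $0$ and the monotonicity structure at $\infty$ pin the quotient down). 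This contradicts $\inf_{S_1}\tilde u/\tilde v<\sup_{S_1}\tilde u/\tilde v$ established above, and the contradiction proves that $\zeta$ was regular to begin with.

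\textbf{Main obstacle.} The delicate step is the second paragraph: making rigorous that the strict gap $m<M$ for $u/v$ near $\zeta$ is inherited by the limiting quotient $\tilde u/\tilde v$ on $S_1$. One must choose the evaluation points carefully — pick $x_n^-,x_n^+\in S_{R_n}\cap\Omega'$ nearly attaining $m_{R_n}$ and $M_{R_n}$, rescale them into $S_1$, pass to convergent subsequences of these points, and use the locally uniform convergence $u_n/v_n\to\tilde u/\tilde v$ together with the uniform Harnack bound to conclude $\tilde u(\bar x^-)/\tilde v(\bar x^-)\le m+o(1)$ and $\tilde u(\bar x^+)/\tilde v(\bar x^+)\ge M-o(1)$. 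A secondary subtlety is handling the wide-sense (possibly infinite) limits uniformly: if $m=0$ or $M=\infty$ one should argue with $\tilde u/\tilde v$ or its reciprocal as appropriate, exactly as in \cite{frass_pinchover}, so that the monotonicity dichotomy in Lemma~\ref{lemma_1} still applies on the limiting equation. Everything else (the compactness, the invariance of the Fuchsian condition under dilation from Proposition~\ref{fuchsian_prop}, the uniform Harnack inequality) is already available in the excerpt.
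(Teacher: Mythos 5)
Your plan is the paper's plan: dilate along $\{R_n\}$, invoke the Harnack convergence principle, and feed the regularity of the limiting equation at $\zeta_1\in\{0,\infty\}$ back to conclude $m=M$. However, two of your choices create friction that the paper's proof avoids, and the step you flag as the ``main obstacle'' is a self-inflicted wound.

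First, the normalization. You set $u_n=u(R_n\cdot)/u(R_nx_0)$ and $v_n=v(R_n\cdot)/v(R_nx_0)$, which makes $u_n/v_n$ differ from $u(R_n\cdot)/v(R_n\cdot)$ by the factor $v(R_nx_0)/u(R_nx_0)$. This is exactly why you then need to argue that the ``gap survives the limit'' and to track where $x_0$ sits relative to $S_{R_n}$. The paper normalizes \emph{both} $u_n$ and $v_n$ by $u(R_nx_0)$; then $v_n(x_0)\asymp 1$ (since one first reduces to the case $u\asymp v$ near $\zeta$, after disposing of $m=0$ or $M=\infty$ via the uniform Harnack inequality), so Proposition~\ref{covergence_principle} still applies, yet $u_n/v_n = u(R_n\cdot)/v(R_n\cdot)$ \emph{exactly}. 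The locally uniform convergence then gives, for \emph{every} fixed $R>0$,
\[
\sup_{S_R}\frac{u_\infty}{v_\infty}=\lim_{n\to\infty}\sup_{S_R}\frac{u_n}{v_n}=\lim_{n\to\infty}M_{RR_n}=M,
\qquad
\inf_{S_R}\frac{u_\infty}{v_\infty}=m,
\]
with no correction factor and no evaluation-point bookkeeping. The ``delicate step'' you anticipate simply disappears.

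Second, the conclusion. You try to argue that $\tilde u/\tilde v$ must be \emph{constant on all of} $Y$ by a dilation-invariance heuristic, which is both more than is needed and not actually justified by what you wrote (a wide-sense limit at one end plus eventual monotonicity does not by itself force the quotient to be a constant function). The paper's observation is sharper and more elementary: the identity above holds for every $R$, so $\sup_{S_R}u_\infty/v_\infty$ and $\inf_{S_R}u_\infty/v_\infty$ are \emph{independent of $R$}. Regularity of the limiting equation at $\zeta_1$ means $\lim_{x\to\zeta_1}u_\infty/v_\infty$ exists in the wide sense; sending $R\to\zeta_1$ in the displayed identities then forces $M=m$ immediately. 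No claim about global constancy of $u_\infty/v_\infty$ is needed or made.

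So: same skeleton, but you should switch to the paper's single-denominator normalization and replace the constancy claim with the $R$-independence of the extremal values of $u_\infty/v_\infty$; both changes make the argument rigorous and remove the parts you were uneasy about.
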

\begin{proof} 
	Let $u, v \in \mathcal{G}_\zeta$ and  set 
	\vspace{-3mm}  
	\begin{equation}\label{eqn_1}
	m_r:= \underset{S_r\cap\Omega'}{\inf} \frac{u(x)}{v(x)},\quad M_r:= \underset{S_r\cap\Omega'}{\sup} \frac{u(x)}{v(x)},
	\end{equation}
	where $\Omega'$ is a punctured neighbourhood of $\zeta$. By Lemma~\ref{lemma_1}, $M:=\lim_{r\rightarrow\zeta}{M_r}$ and  $m:=\lim_{r\rightarrow\zeta}{m_r}$ exist in the wide sense, and we need to prove that $M=m$. 
	
	Now if $M:=\lim_{r\rightarrow\zeta}{M_r}=\infty$ (respectively, $m:=\lim_{r\rightarrow\zeta}{m_r}=0$), then by the uniform Harnack inequality, Lemma~\ref{uniform_harnack}, we have $m=\infty$ (respectively, $M=0$), and hence the limit 
	$$\underset{\substack{x\rightarrow\zeta\\x\in\Omega'}}{\lim}\frac{u(x)}{v(x)}\quad \text{exists in the wide sense.}$$
	Thus, we may assume that $u\asymp v$ in some neighbourhood $\Omega'\subset \Omega$ of $\zeta$. Fix $x_0\in \mathbb{R}^d$ such that $R_nx_0\in \Omega$ for all $n\in \mathbb{N}$ and define
	\vspace{-3mm}   
	\begin{equation}\label{eqn_2}
	u_n(x):= \frac{u(R_nx)}{u(R_nx_0)},\quad v_n(x):=\frac{v(R_nx)}{u(R_nx_0)}.
	\end{equation}
	Then by the definition of the set $\mathcal{G}_\zeta$, $u_n$ and $v_n$ are positive solutions of the equation
	\begin{equation}\label{eqn_3}
	-\Delta_{p,A_{n}}(w)+ V_n(x)|w|^{p-2}w=0\quad \mbox{in } \Omega'/{R_n},
	\end{equation}
	where $A_{n}(x):=A_{R_n}(x)$ and $V_n(x):=V_{R_n}(x)$, are the associated scaled matrix and potential, respectively. Since $u_n(x_0)=1$ and $v_n(x_0)\asymp 1$, the Harnack convergence principle (Proposition~\ref{covergence_principle}) implies that $\{R_n\}$ admits a subsequence (still denoted by $\{R_n\}$) such that  
	\begin{equation}
	\lim_{n\rightarrow\infty} u_n(x):=u_\infty(x), \mbox{ and } \lim_{n\rightarrow\infty}v_n(x):=v_\infty(x)
	\end{equation}
	locally uniformly in $Y =\displaystyle{\lim_{n\rightarrow\infty} \Omega'/R_{n}}$, and $u_\infty$ and $v_\infty$ are positive solutions of the limiting dilated equation 
	\begin{equation}\label{dilated_eq}
	\mathcal{D}^{\{R_n\}}(Q)(w) = -\Delta_{p,\mathbb{A}}(w)+\mathbb{V}|w|^{p-2}w=0\,\,\text{on}\,\,Y.
	\end{equation}
	Thus,  for any fixed $R>0$ we have 
	\begin{align*}
	\underset{x\in S_R}{\sup} \frac{u_\infty(x)}{v_\infty(x)} & =\underset{x\in S_R}{\sup}\lim_{n\rightarrow\infty}\frac{u_n(x)}{v_n(x)} =
	\lim_{n\rightarrow\infty}\underset{x\in S_R}{\sup}\frac{u_n(x)}{v_n(x)}\\[2mm]
	&=\lim_{n\rightarrow\infty}\underset{x\in S_R}{\sup}\frac{u(R_nx)}{v(R_nx)}=\lim_{n\rightarrow\infty}\underset{R_nx\in S_{RR_n}}{\sup}\frac{u(R_nx)}{v(R_nx)} = \lim_{n\rightarrow\infty} M_{RR_n} =M, 
	\end{align*}
	where we have used the existence of $\lim_{r\rightarrow\zeta}{M_r}=M$, and the local uniform convergence of the sequence $\{u_n/v_n\}$ in $Y$. Similarly, we have 
	$\underset{x\in S_R}{\inf} \frac{u_\infty(x)}{v_\infty(x)}=m$.
	Now by our assumption, either $\zeta_1=0$ or $\zeta_1 =\infty$ is a regular point of the dilated equation \eqref{dilated_eq}, so the limit 
	$$\underset{\substack{x\rightarrow\zeta_1\\x\in Y}}{\lim}\frac{u_\infty(x)}{v_\infty(x)}\quad \text{exists.}$$
	Hence, $m=M$, which implies that
	\vspace{-3mm}   
	$$\underset{\substack{x\rightarrow\zeta\\x\in \Omega'}}{\lim}\frac{u(x)}{v(x)}\quad \text{exists.}$$
	Thus,  $\zeta$ is a regular point of the equation $Q(u)=0$ in $\Omega$. 
\end{proof}
\section{Asymptotic behaviour of $(p,\mathbb{A})$-harmonic functions}\label{sec_pA_harmonic}
In this section, we study the regularity of positive $(p,\mathbb{A})$-harmonic functions at $\gz$, when $\mathbb{A}\in \R^{d\times d}$ is a fixed symmetric and positive definite matrix. We prove the following theorem.
\begin{theo}\label{regularpoint_lap}
Assume that $1<p<\infty$ and $d\geq 2$. Let $\mathbb{A}\in \R^{d\times d}$ be a fixed symmetric and positive definite matrix. Then
\begin{itemize}
	\item[(i)] for $p\leq d$, $\zeta =0$ is a regular point of the equation $-\Delta_{p,\mathbb{A}} (u)=0$ in $\mathbb{R}^d\setminus{\{0\}}$.
	\item[(ii)] for $p\geq d$, $\zeta=\infty$ is a regular point of the equation $-\Delta_{p,\mathbb{A}} (u)=0$ in $\mathbb{R}^d$.
\end{itemize}       
\end{theo}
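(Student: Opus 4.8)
Since $\mathbb A$ is symmetric and positive definite, set $S:=\mathbb A^{1/2}$ and introduce the linear change of variables $y=S^{-1}x$. If $u(x)=w(S^{-1}x)$, then $\nabla_x u=S^{-1}\nabla_y w$, hence $|\nabla_x u|_{\mathbb A}^2=S^{-1}\mathbb A S^{-1}\nabla_y w\cdot\nabla_y w=|\nabla_y w|^2$, and a direct computation (using $\mathbb A S^{-1}=S$ and $S^{-1}S=I$) gives $\Delta_{p,\mathbb A}u(x)=(\Delta_p w)(S^{-1}x)$; the same identity holds in the weak sense, so $u$ is a positive $(p,\mathbb A)$-harmonic function (resp.\ super/subsolution) on a domain $D$ if and only if $w$ is positive $p$-harmonic on $S^{-1}D$. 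As $S^{-1}$ is a linear isomorphism with $|S^{-1}x|\asymp|x|$, it maps punctured neighbourhoods of $0$ (resp.\ neighbourhoods of $\infty$) onto punctured neighbourhoods of $0$ (resp.\ neighbourhoods of $\infty$) and satisfies $x\to\zeta\iff S^{-1}x\to\zeta$. Consequently it induces a bijection of $\mathcal G_\zeta$ for $-\Delta_{p,\mathbb A}$ onto $\mathcal G_\zeta$ for $-\Delta_p$ preserving all quotients $u/v$ and their limits. It therefore suffices to prove: $\zeta=0$ is regular for $-\Delta_p u=0$ in $\mathbb R^d\setminus\{0\}$ when $p\le d$, and $\zeta=\infty$ is regular for $-\Delta_p u=0$ when $p\ge d$.

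\textbf{The case $\zeta=0$, $p\le d$.} Let $u,v\in\mathcal G_0$. The key input is the classification of isolated singularities of positive $p$-harmonic functions (Serrin; Kichenassamy--Véron for $p=d$; see also \cite{Heinonen}): a positive $p$-harmonic function $h$ in a punctured ball $B_{r_0}\setminus\{0\}$ satisfies exactly one of the alternatives (a) $h$ extends to a $p$-harmonic function in $B_{r_0}$, in which case $\lim_{x\to0}h(x)=h(0)\in(0,\infty)$ by the Harnack inequality; or (b) $\lim_{x\to0}h(x)/\mu_p(x)=c_h\in(0,\infty)$, where $\mu_p(x)=|x|^{(p-d)/(p-1)}$ for $p<d$ and $\mu_p(x)=\log(1/|x|)$ for $p=d$. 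Since $\mu_p(x)\to\infty$ as $x\to0$, inspecting the four possibilities for the pair $(u,v)$ shows that $\lim_{x\to0}u(x)/v(x)$ exists in $[0,\infty]$: the limit equals $u(0)/v(0)\in(0,\infty)$ if both functions are of type~(a); $c_u/c_v\in(0,\infty)$ if both are of type~(b); $0$ if $u$ is of type~(a) and $v$ of type~(b); and $+\infty$ in the remaining case. Hence $0$ is a regular point. (Alternatively, one can argue in the spirit of the rest of the paper without invoking the sharp asymptotics: by Lemma~\ref{lemma_1} the limits $m=\lim_{r\to0}m_r$, $M=\lim_{r\to0}M_r$ exist; by the uniform Harnack inequality, Theorem~\ref{uniform_harnack}, $M_r\le C m_r$ with $C=C(d,p)$, so the wide-sense limit exists at once if $m=0$ or $m=\infty$; otherwise $u\asymp v$ near $0$, and since $-\Delta_p$ is scale invariant the dilations $u_n(x)=u(R_nx)/u(R_nx_0)$, $v_n(x)=v(R_nx)/u(R_nx_0)$ converge along a subsequence, by Proposition~\ref{covergence_principle}, to positive $p$-harmonic $u_\infty,v_\infty$ on $\mathbb R^d\setminus\{0\}$ with $\sup_{S_R}u_\infty/v_\infty\equiv M$ and $\inf_{S_R}u_\infty/v_\infty\equiv m$; the weak comparison principle, Theorem~\ref{weak_comparison}, on annuli whose radii tend to $0$ and $\infty$ gives $mv_\infty\le u_\infty\le Mv_\infty$ on $\mathbb R^d\setminus\{0\}$, and, $u_\infty/v_\infty$ attaining the value $M$ at an interior point of each sphere while $Mv_\infty$ is again $p$-harmonic, the strong comparison principle forces $u_\infty\equiv Mv_\infty$, i.e.\ $m=M$.)

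\textbf{The case $\zeta=\infty$, $p\ge d$.} If $p=d$, the inversion $\iota(y)=y/|y|^2$ is conformal and the $d$-Dirichlet energy $\int|\nabla u|^d\,\mathrm dx$ is conformally invariant, so $\tilde u:=u\circ\iota$ is positive $d$-harmonic in a punctured neighbourhood of $0$ whenever $u$ is positive $d$-harmonic in a neighbourhood of $\infty$, and $y\to0\iff\iota(y)\to\infty$; this reduces regularity at $\infty$ to the already established regularity at $0$ (part~(i) with $p=d$), since $u/v$ near $\infty$ corresponds to $\tilde u/\tilde v$ near $0$. If $p>d$, one uses the analogous dichotomy at infinity for a positive $p$-harmonic function in an exterior domain $\{|x|>R_0\}$ — obtainable by comparison with the radial solutions $a+b|x|^{(p-d)/(p-1)}$ together with the monotonicity in Lemma~\ref{lemma_1}, or from the weighted Kelvin transform — namely that such a function either has a finite positive limit at $\infty$ or is asymptotic to $c|x|^{(p-d)/(p-1)}\to\infty$; the same four-case analysis as above then shows $\lim_{x\to\infty}u(x)/v(x)$ exists in $[0,\infty]$.

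\textbf{Main obstacle.} The change of variables and the case analyses are routine; the substance lies in the asymptotic classification near the isolated singularity, or equivalently — in the self-contained route — in upgrading the two-sided bound $mv\le u\le Mv$ to the equality $m=M$. There the delicate point is the strong comparison principle for the $p$-Laplacian, which may fail where the gradient of the solution vanishes; this is circumvented by passing to the scale-invariant limit $(u_\infty,v_\infty)$ and using that the touching set of $u_\infty$ and $Mv_\infty$ meets every sphere, or, more directly, by quoting the sharp Serrin--Kichenassamy--Véron asymptotics.
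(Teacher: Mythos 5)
Your proposal is correct, but it takes a genuinely different route from the paper. The paper proves Theorem~\ref{regularpoint_lap} by first establishing the precise asymptotics of positive $(p,\mathbb{A})$-harmonic functions directly in the elliptic geometry: it constructs the fundamental solution $\mu(x)=C_{p,d,\mathbb{A}}|x|_{\mathbb{A}^{-1}}^{(p-d)/(p-1)}$ (Lemma~\ref{lem-fs}), proves $u\sim\mu$ near $0$ for $p\le d$ (Theorem~\ref{assythm_1}), introduces a generalized Kelvin transform $x\mapsto x/|x|^2_{\mathbb{A}^{-1}}$ and redoes the Serrin-type capacity estimates with the weight $|x|^{2(p-d)}_{\mathbb{A}^{-1}}$ to obtain the asymptotics at infinity for $p\ge d$ (Lemmas~\ref{assylem_1}--\ref{assylem_4}, Theorem~\ref{assythm_2}), and then deduces Theorem~\ref{regularpoint_lap} as an immediate corollary. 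You instead begin with the linear change of variables $y=\mathbb{A}^{-1/2}x$; the identity $\Delta_{p,\mathbb{A}}u(x)=(\Delta_p w)(\mathbb{A}^{-1/2}x)$ which you compute is correct (the symmetry of $\mathbb{A}^{1/2}$ is used to get $\operatorname{div}_x(Sg\circ S^{-1})=(\operatorname{div}_y g)\circ S^{-1}$), the map is a linear isomorphism fixing $0$ and $\infty$ with $|S^{-1}x|\asymp|x|$, and it induces a bijection of $\mathcal G_\zeta$ preserving quotients, so the statement reduces cleanly to the Euclidean $p$-Laplacian, where the Serrin/Kichenassamy--V\'eron classification near $0$ and the inversion (conformal invariance for $p=d$, weighted Kelvin transform for $p>d$) are classical. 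Your reduction buys a shorter, essentially off-the-shelf proof of the regularity statement; the paper's direct route buys self-contained $(p,\mathbb{A})$-versions of the fundamental solution, the Kelvin transform and the asymptotics, which it uses later (e.g.\ criticality in Theorem~\ref{criticality}, Corollary~\ref{assylem_2}, and the elliptically symmetric case in Section~\ref{sec_spher}). Your self-noted caveat is apt: the alternative argument for $p\le d$ that upgrades $mv_\infty\le u_\infty\le Mv_\infty$ to $m=M$ via the strong comparison principle needs the critical set of $v_\infty$ to be avoidable; the paper circumvents exactly this by comparing against the fundamental solution $\mu$, whose gradient never vanishes, so for a fully rigorous write-up you should either do the same or rely on your primary argument (the Serrin--Kichenassamy--V\'eron dichotomy), which is indeed sufficient.
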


\medskip
The proof of Theorem \ref{regularpoint_lap} depends on the asymptotic behaviour of positive solutions near an isolated singularity. Before proving Theorem~\ref{regularpoint_lap}, we first establish  the existence of a `fundamental solution' (given by an explicit form) for  the $(p, \mathbb{A})$-Laplacian 
$$-\Delta_{p,\mathbb{A}} (u)=\text{div}(|\nabla u|_\mathbb{A}^{p-2}\mathbb{A}\nabla u),$$ 
where $\mathbb{A}\in \R^{d\times d}$ is a fixed symmetric and positive definite matrix.
\begin{lemma}[Fundamental solution]\label{lem-fs}
	 Let $\mathbb{A}\in \R^{d\times d}$ be a fixed symmetric, positive definite matrix, and let $\mathbb{A}^{-1}$ be its inverse matrix. Fix  $y\in \R^d$. Let  
	 \begin{equation}\label{fundamental_soln}
	 \gm(x-y):= C_{p,d,\mathbb{A}}
	 \begin{cases} 
	 |x-y|_{\mathbb{A}^{-1}}^{(p-d)/(p-1)}& x\in \R^d, p\neq d,\\[2mm]
	 -\log |x-y|_{\mathbb{A}^{-1}} & x\in \R^d, p=d,
	 \end{cases}
	 \end{equation}
	 where
	 \begin{equation*}
	 C_{p,d,\mathbb{A}}:=
	 \begin{cases} 
	 \dfrac{p-1}{d-p}(|\mathbb{A}|^{1/2}\gw_d)^{-1/(p-1)}& p\neq d,\\[4mm]
	 (|\mathbb{A}|^{1/2}\gw_d)^{-1/(d-1)} & p=d,
	 \end{cases}
	 \end{equation*}
	 $|\mathbb{A}|$ is the determinant of  $\mathbb{A}$, and $\gw_d$ is the hypersurface area of the unit sphere in $\R^d$.	 
Then   
$$-\Delta_{p,\mathbb{A}} (\gm(x-y)) =\gd_y(x)   \qquad \mbox{ in } \R^d.$$
\end{lemma}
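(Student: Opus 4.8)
The plan is to reduce the claim about the anisotropic $(p,\mathbb{A})$-Laplacian to the classical fact that the radial function $c_{p,d}|z|^{(p-d)/(p-1)}$ (resp. $-c\log|z|$) is a fundamental solution of the ordinary $p$-Laplacian, by means of a linear change of variables that diagonalizes $\mathbb{A}$. First I would write $\mathbb{A} = B^2$ for the symmetric positive definite square root $B$ of $\mathbb{A}$, set $z = B^{-1}(x-y)$, and observe that $|x-y|_{\mathbb{A}^{-1}}^2 = \mathbb{A}^{-1}(x-y)\cdot(x-y) = |B^{-1}(x-y)|^2 = |z|^2$, so $\gm(x-y) = C_{p,d,\mathbb{A}}\,\Phi(|z|)$ where $\Phi(t) = t^{(p-d)/(p-1)}$ for $p\neq d$ and $\Phi(t) = -\log t$ for $p=d$. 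Then I would track how the $(p,\mathbb{A})$-Laplacian transforms under $x \mapsto z$: writing $w(x) = \tilde w(B^{-1}(x-y))$, the chain rule gives $\nabla_x w = B^{-1}\nabla_z \tilde w$, hence $|\nabla_x w|_{\mathbb{A}}^2 = \mathbb{A}\nabla_x w\cdot\nabla_x w = |\nabla_z\tilde w|^2$, and $|\nabla_x w|_{\mathbb{A}}^{p-2}\mathbb{A}\nabla_x w = B(|\nabla_z\tilde w|^{p-2}\nabla_z\tilde w)$; taking $\diver_x$ and using $\diver_x(B F(B^{-1}(x-y))) = (\diver_z F)(B^{-1}(x-y))$ for a vector field $F$ (since $B$ is constant and symmetric), one gets $\Delta_{p,\mathbb{A}}w(x) = (\Delta_p\tilde w)(z)$. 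Thus in the new variables the equation becomes exactly $-\Delta_p(C_{p,d,\mathbb{A}}\Phi(|z|)) = \delta_0(z)$ up to the Jacobian factor coming from the change of variables in the distributional pairing.

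The next step is to run the standard distributional verification in the $z$-variables. For a test function $\phi\in C_c^\infty(\R^d)$ I would compute $\int_{\R^d}|\nabla_z(C\Phi(|z|))|^{p-2}\nabla_z(C\Phi(|z|))\cdot\nabla_z\psi\,dz$ where $\psi(z) = \phi(y + Bz)$, by excising a small ball $B_\eps$, integrating by parts on $\R^d\setminus B_\eps$ (where $\Phi(|z|)$ is $p$-harmonic, as a direct radial computation shows: for $p\neq d$, $\nabla_z\Phi(|z|) = \tfrac{p-d}{p-1}|z|^{(1-d)/(p-1)}\tfrac{z}{|z|}$, and $|\nabla\Phi|^{p-2}\nabla\Phi$ is a constant multiple of $|z|^{1-d}z/|z|$, whose divergence vanishes away from the origin; the $p=d$ case is analogous), and evaluating the boundary flux through $S_\eps$. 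The flux integral $\int_{S_\eps}|\nabla\Phi|^{p-2}\partial_\nu\Phi\,\psi\,dS$ tends to $-\psi(0)$ as $\eps\to0$ precisely because of the normalizing constant: the surface area contributes $\omega_d\eps^{d-1}$ and the radial derivative term contributes the reciprocal power of $\eps$, with the constant $C_{p,d,\mathbb{A}}$ (absent the $|\mathbb{A}|^{1/2}$ factor) chosen so the product is $1$. Then translating back, $\psi(0) = \phi(y)$, and the Jacobian of $z\mapsto y+Bz$ is $|\det B| = |\mathbb{A}|^{1/2}$, which is exactly the remaining factor in $C_{p,d,\mathbb{A}}$; this is what makes the pairing come out to $\phi(y)$ rather than $|\mathbb{A}|^{1/2}\phi(y)$, confirming $-\Delta_{p,\mathbb{A}}\gm(\cdot-y) = \delta_y$.

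I expect the only real bookkeeping obstacle — not a conceptual one — to be getting all the normalization constants to line up simultaneously: the power of $\eps$ from the radial derivative, the $\omega_d\eps^{d-1}$ from the sphere, and the $|\mathbb{A}|^{1/2}$ from the change-of-variables Jacobian all have to combine to exactly cancel. A secondary technical point worth stating carefully is that $\gm(\cdot - y)\in W^{1,p}_{\loc}(\R^d)$ (so that $\Delta_{p,\mathbb{A}}\gm$ makes sense as a distribution and the integration by parts on $\R^d\setminus B_\eps$ is justified): for $p\neq d$ one checks $|\nabla\gm|^p$ is integrable near $y$ since $|\nabla\gm(x-y)|\asymp|x-y|^{(1-d)/(p-1)}$ and $p(1-d)/(p-1) > -d$ iff $p<d$, while for $p>d$ the function is continuous and the gradient is bounded, and for $p=d$ the logarithm has gradient $\asymp|x-y|^{-1}$ with $|z|^{-d}$ failing integrability — so in the $p=d$ case the statement is understood in the punctured space $\R^d\setminus\{y\}$, consistent with how the fundamental solution is used later in Section~\ref{sec_pA_harmonic}. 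With these points addressed, the proof is essentially the classical one transported through the linear map $B$.
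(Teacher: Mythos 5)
Your proof is correct and takes a genuinely different route from the paper's. You diagonalize the operator by the substitution $z = B^{-1}(x-y)$ with $B = \mathbb{A}^{1/2}$, verify that $\Delta_{p,\mathbb{A}}$ is conjugated into the ordinary $p$-Laplacian $\Delta_p$ under this change of variables, and then import the classical fundamental solution of $\Delta_p$, with the Jacobian $|\det B| = |\mathbb{A}|^{1/2}$ accounting exactly for the extra factor in $C_{p,d,\mathbb{A}}$. The paper instead works entirely in the original variables: it computes $\nabla\gm$, $|\nabla\gm|_\mathbb{A}$, and $\eta := |\nabla\gm|_\mathbb{A}^{p-2}\mathbb{A}\nabla\gm = \text{const}\cdot|x|_{\mathbb{A}^{-1}}^{-d}x$ explicitly, shows $\diver\eta = 0$ away from the origin by a direct coordinate computation, and then determines the constant by applying the divergence theorem over the ellipsoid $E_\mathbb{A}(r)$, using the formula $r^{d-1}|\mathbb{A}|^{1/2}\gw_d$ for its boundary hypersurface area. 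Your reduction makes the origin of the normalizing constant transparent (classical constant over the Jacobian) and avoids the ellipsoid surface-area computation entirely, at the cost of having to carefully track how the gradient, the $\mathbb{A}$-norm, the flux vector field, and the divergence each transform under $B$; the paper's direct approach is more self-contained but requires the explicit surface-area formula and a slightly more laborious divergence calculation.

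One small correction to your final technical remark: for $p = d$ the distributional statement $-\Delta_{d,\mathbb{A}}\gm(\cdot-y)=\gd_y$ does make sense on all of $\R^d$, not just the punctured space, since what is actually required is $|\nabla\gm|_{\mathbb{A}}^{p-1}\in L^1_{\loc}$ (so that the flux field $|\nabla\gm|_\mathbb{A}^{p-2}\mathbb{A}\nabla\gm$ is a locally integrable vector field on which $\diver$ acts distributionally), and $|\nabla\gm|^{d-1}\asymp |x-y|^{1-d}$ is indeed locally integrable. The failure you noted, namely $|\nabla\gm|^d\notin L^1_{\loc}$ near $y$, only means $\gm\notin W^{1,d}_{\loc}(\R^d)$; it does not obstruct the distributional interpretation of $\Delta_{p,\mathbb{A}}\gm$, and your cutoff argument goes through unchanged in that case.
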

\begin{remarks}\label{rem_abuse}
{\em 1. Note that $\gm$ is a positive function if and only if $p<d$, which implies that $-\Delta_{p,\mathbb{A}}$ (where $\mathbb{A}$ is a constant matrix) is subcritical in $\R^d$ if and only if $p<d$ (see Theorem~\ref{criticality}).
	
	2. In the sequel  we abuse the notation and write $\gm(|x-y|):=\gm(x-y)$.}
\end{remarks}
\begin{proof}
Denote $C:= C_{p,d, \mathbb{A}}$, and assume first that $p\neq d$.  Without loss of generality, we may assume that $y=0$. Recall that for a fixed symmetric  matrix $\mathbb{A}\in \R^{d\times d}$, the gradient of the associated quadratic form is given by 
\vspace{-3mm}
\begin{equation}\label{grad_qf}
\nabla (\mathbb{A}x\cdot x)= 2\mathbb{A}x.
\end{equation}
	 Therefore, the  chain rule and \eqref{grad_qf} implies 
 \begin{align*}
	     \nabla\! \gm(x)& \!=\!C\frac{p\!-\!d}{p\!-\!1} |x|_{\mathbb{A}^{-1}}^{(1-d)/\!(p\!-\!1\!)} \nabla\!\left(\!(\mathbb{A}^{-1}x\!\cdot\! x)^{1/2}\right) 
	      \!=\! C\frac{p\!-\!d}{p\!-\!1} |x|_{\mathbb{A}^{-1}}^{(1-d)/(p-1)}\!\frac{1}{2|x|_{\mathbb{A}^{-1}}}\! \nabla\! (\mathbb{A}^{-1}x\!\cdot\! x)\\[2mm]
	     & =C\frac{p-d}{p-1}|x|_{\mathbb{A}^{-1}}^{(1-d)/(p-1)}\frac{1}{|x|_{\mathbb{A}^{-1}}} \mathbb{A}^{-1}x
	     = C\frac{p-d}{p-1}|x|_{\mathbb{A}^{-1}}^{(1-d)/(p-1)-1}\mathbb{A}^{-1}x.
	\end{align*}
So, 
\vspace{-3mm}
\begin{align*}
|\nabla \gm(x)|_\mathbb{A}&\!=\! |C|\!\left|\frac{p-d}{p-1}\right|\!|x|_{\mathbb{A}^{-1}}^{((1-d)/(p-1)-1)}|\mathbb{A}^{-1}x|_\mathbb{A}\!=\!
= |C|\left|\frac{p-d}{p-1}\right|  |x|_{\mathbb{A}^{-1}}^{(1-d)/(p-1)}.
\end{align*}
Consequently,
\vspace{-3mm} 
\begin{align}\label{eta}
\gh(x)\!:=\! |\nabla \gm(x)|_\mathbb{A}^{p\!-\!2}\!\mathbb{A}\!\nabla \gm(x) \!=\! C|C|^{p-2}\! c(p,d)|x|_{\mathbb{A}^{-1}}^{\frac{(1-d)(p-2)}{p-1}} |x|_{\mathbb{A}^{-1}}^{\frac{1-d}{p-1}-1} \!x
 \!=\! C|C|^{p\!-\!2}\! c(p,d) |x|^{-d}_{\mathbb{A}^{-1}} x, 
\end{align}
where  $c(p,d) = \left|\frac{p-d}{p-1}\right|^{p-2}\frac{p-d}{p-1}$\,.


Denote $\eta_i(x): =C|C|^{p-2} c(p,d)|x|^{-d}_{\mathbb{A}^{-1}}x_i $. Then by \eqref{grad_qf} 
\begin{align*}
\frac{\partial \eta_i(x)}{\partial x_i} = 
C|C|^{p-2}\!c(p,d)\left( |x|_{\mathbb{A}^{-1}}^{-d}  -  d |x|^{-d-2}_{\mathbb{A}^{-1}}(\mathbb{A}^{-1}x)_i x_i\right).
\end{align*}
Therefore, for all $x \in \R^d\setminus \{0\}$ we have
\vspace{-3mm}
\begin{align*}
-\Delta_{p,\mathbb{A}} (\gm(x)) \!=\!-\diver \eta(x) = -C|C|^{p-2}c(p,d) \Big(\! d |x|_{\mathbb{A}^{-1}}^{-d} -
d |x|^{-d-2}_{\mathbb{A}^{-1}}\big(\sum_{i=1}^d(\mathbb{A}^{-1}x)_i x_i \big)\!\Big) \!=\!0 .
\end{align*}
Similarly, for $p=d$, we obtain that $C_{d,\mathbb{A}}\log(|x|_{\mathbb{A}^{-1}})$ satisfies $-\Delta_{d,\mathbb{A}}(u)=0$ in $\mathbb{R}^d\setminus\{0\}$. 

We now find the constant $C= C_{p,d, \mathbb{A}}\in \R$ such that $\mu$ satisfies 
\begin{equation}\label{fund_eqn}
-\Delta_{p,\mathbb{A}}(\gm) =\delta_0 \qquad \mbox{in }\R^d,
\end{equation}
in the sense of distributions. Recall that the ellipsoid
$E_\mathbb{A}(r)=\{x\in \mathbb{R}^d \mid |x|_{\mathbb{A}^{-1}}< r\}$  
with `center' at the origin and `radius' $r>0$, is {\it affinely equivalent} to the ball $B_r(0)$. Hence, $E_\mathbb{A}(r)$ is a relatively compact, convex subset of $\mathbb{R}^d$.

Let us first consider the case $p\neq d$. Note that if $p<d$, then  $\lim_{x\to 0}\mu(x)=\infty$, but nevertheless,  $\gm$ is integrable near the origin. Using the divergence theorem on the ellipsoid $E_\mathbb{A}(r)$, it follows that the function $\gm$ should satisfy
\begin{align}
-1= \int_{E_\mathbb{A}(r)}\diver\!\! \left(|\nabla \mu|^{p-2}_\mathbb{A} \mathbb{A}\nabla\mu\right)\!\! \dx
 =\int_{\partial E_\mathbb{A}(r)}|\nabla \mu|^{p-2}_\mathbb{A}\mathbb{A}\nabla \mu\cdot {\bf{n}}\d S, \label{fund_eqn1}
\end{align}
where ${\bf n}=\mathbb{A}^{-1}x/|x|_{\mathbb{A}^{-1}(x)}$ is the unit outward normal to the boundary of the ellipsoid  $E_\mathbb{A}(r)$ and $\dS$ is the hypersurface element area. 
Recall that  by \eqref{eta} we have
$$|\nabla \mu(x)|^{p-2}_\mathbb{A} \mathbb{A}\nabla \mu(x)= C|C|^{p-2}\left|\frac{p-d}{p-1}\right|^{p-2}\frac{p-d}{p-1}\frac{x}{|x|_{\mathbb{A}^{-1}}^d}\,,$$ 
and the hypersurface area of $\partial E_\mathbb{A}(r)$ is given by 
$r^{d-1}|\mathbb{A}|^{1/2}\gw_d$ (see for example, \cite[p. 238]{manfred}), it follows  from \eqref{fund_eqn1} that
\begin{align*}
-1 & = C|C|^{p-2} \left|\frac{p-d}{p-1}\right|^{p-2}
\frac{p-d}{p-1} 
\int_{\partial E_\mathbb{A}(r)}\frac{1}{|x|^{d-1}_{\mathbb{A}^{-1}}}\dS
=  C|C|^{p-2} \left|\frac{p-d}{p-1}\right|^{p-2}\frac{p-d}{p-1} |\mathbb{A}|^{1/2}\gw_d.\label{fund_eqn2}
\end{align*}
So, for $p\neq d$, we have 
\vspace{-2mm}
$$C=C_{p,d,\mathbb{A}}= \frac{p-1}{d-p}(|\mathbb{A}|^{1/2}\gw_d)^{-1/(p-1)}.$$
Similarly, for $p=d$ one obtains $C(d, \mathbb{A})= (|\mathbb{A}|^{1/2}\gw_d)^{-1/(d-1)}$. 
\end{proof}
\begin{theorem}\label{assythm_1}
		Let $1<p\leq d$ and $\mathbb{A}\in \R^{d\times d}$ be a fixed symmetric positive definite matrix. Suppose that $u$ is a positive solution of the equation $-\Delta_{p,\mathbb{A}}(v)=0$ in a punctured neighbourhood of $0$ which has a non-removable singularity at $0$, then
		\vspace{-3mm} 
		\begin{equation*}\label{eqn_assy}
		u(x)\underset{x\rightarrow 0}{\sim} \gm(x),
		\end{equation*}
		where $\gm$ is the fundamental solution of $-\Delta_{p,\mathbb{A}}$  in $\R^d$ given by \eqref{fundamental_soln}.
\end{theorem}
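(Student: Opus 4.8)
The plan is to reduce the problem to the already-known asymptotic result for the ordinary $p$-Laplacian via an affine change of variables, and then exploit scaling together with the Harnack convergence principle to pin down the exact asymptotic constant. First I would normalize the matrix: since $\mathbb{A}$ is symmetric and positive definite, write $\mathbb{A} = \mathbb{B}^2$ with $\mathbb{B}=\mathbb{A}^{1/2}$ symmetric positive definite, and set $y = \mathbb{B}^{-1}x$. A direct computation shows that if $u$ solves $-\Delta_{p,\mathbb{A}}(u)=0$ in a punctured neighbourhood of $0$, then $\tilde u(y) := u(\mathbb{B}y)$ solves $-\Delta_p(\tilde u)=0$ (the usual $p$-Laplacian, up to a positive multiplicative constant which is irrelevant for the homogeneous equation) in a punctured neighbourhood of $0$, because $|\nabla_x u|_{\mathbb{A}} = |\nabla_y \tilde u|$ and the divergence structure is preserved by the linear change of variables. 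Moreover $|y| = |x|_{\mathbb{A}^{-1}}$, so the ellipsoidal ``radius'' in the $x$-variable becomes the Euclidean radius in the $y$-variable, and $\gm(x) = C_{p,d,\mathbb{A}} |x|_{\mathbb{A}^{-1}}^{(p-d)/(p-1)}$ becomes a constant multiple of the standard fundamental solution $|y|^{(p-d)/(p-1)}$ (resp. $-\log|y|$ when $p=d$) of $-\Delta_p$.

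Next I would invoke the known asymptotic theorem for the plain $p$-Laplacian. For $p\le d$, a positive $p$-harmonic function $\tilde u$ in a punctured neighbourhood of $0$ with a non-removable singularity at $0$ satisfies $\tilde u(y) \sim c\,|y|^{(p-d)/(p-1)}$ as $y\to 0$ (for $p<d$) or $\tilde u(y)\sim c\,(-\log|y|)$ (for $p=d$), for some $c>0$; this is the classical Serrin-type / Kichenassamy–Véron result (and is exactly what [frass\_pinchover] uses in the case $\mathbb A = I$, which we may assume since it appears earlier). Transferring back through $y = \mathbb{B}^{-1}x$ gives $u(x) \sim c'\,|x|_{\mathbb{A}^{-1}}^{(p-d)/(p-1)}$ for some $c'>0$, i.e. $u(x) \sim (c'/C_{p,d,\mathbb{A}})\,\gm(x)$. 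So the only thing left is to show the proportionality constant is exactly $C_{p,d,\mathbb{A}}$, equivalently $c' = C_{p,d,\mathbb{A}}$, equivalently that the ``strength'' of the singularity of $u$ equals that of the fundamental solution $\gm$ which by Lemma~\ref{lem-fs} carries a unit point mass.

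To fix the constant I would argue as follows. Let $a_r := \inf_{|x|_{\mathbb{A}^{-1}}=r} u(x)/\gm(x)$ and $A_r := \sup_{|x|_{\mathbb{A}^{-1}}=r} u(x)/\gm(x)$; by the already-established proportionality $u\asymp\gm$ near $0$ these are finite and bounded away from $0$. Using the dilations $u_r(x) := u(rx)/\gm(r)$ (which up to the explicit scaling factor $\gm(r)$ of $\gm$ are again $(p,\mathbb{A})$-harmonic on annuli by the scale-invariance of $-\Delta_{p,\mathbb{A}}$) and the Harnack convergence principle (Proposition~\ref{covergence_principle}, with $A_i\equiv\mathbb{A}$, $V_i\equiv 0$), a subsequence of $\{u_{r_n}\}$ converges locally uniformly on $(\R^d)^*$ to a positive $(p,\mathbb{A})$-harmonic function $u_\infty$; the convergence of $A_r,a_r$ to a common limit $L$ (via Lemma~\ref{lemma_1} and the uniform Harnack inequality of Theorem~\ref{uniform_harnack}) forces $u_\infty = L\,\gm$ on $(\R^d)^*$. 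Then I would test the equation for $u$ against a cutoff supported near $0$ and pass to the limit, or equivalently integrate $-\Delta_{p,\mathbb{A}}(u)$ over the ellipsoidal shell $E_{\mathbb{A}}(r)\setminus E_{\mathbb{A}}(\rho)$ and use the divergence theorem as in the proof of Lemma~\ref{lem-fs}: the flux of $|\nabla u|_{\mathbb{A}}^{p-2}\mathbb{A}\nabla u$ through $\partial E_{\mathbb{A}}(r)$ tends, as $r\to 0$, to the flux of $|\nabla(L\gm)|_{\mathbb{A}}^{p-2}\mathbb{A}\nabla(L\gm)$, which equals $-L^{p-1}$. On the other hand the singularity of $u$ at $0$ is non-removable, and a removable-singularity argument shows the flux is a fixed nonzero number independent of $r$; matching with the normalization of $\gm$ gives $L^{p-1}=1$, hence $L=1$ and $u\sim\gm$.

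The main obstacle I anticipate is precisely this last constant-matching step: showing rigorously that $u\asymp\gm$ can be upgraded to $u\sim\gm$ rather than merely $u\sim c\gm$. This requires either a clean removable-singularity/flux argument for $(p,\mathbb{A})$-harmonic functions (controlling $\int_{\partial E_{\mathbb{A}}(r)}|\nabla u|_{\mathbb{A}}^{p-2}\mathbb{A}\nabla u\cdot\mathbf{n}\,\d S$ as $r\to 0$ and identifying it as $-1$ from the non-removability), or a careful comparison argument using the weak comparison principle (Theorem~\ref{weak_comparison}) to sandwich $u$ between $(1-\eps)\gm$ and $(1+\eps)\gm$ near $0$ using that $\gm$ is an exact solution with unit mass. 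The affine reduction and the qualitative asymptotics $u\asymp\gm$ are routine; the quantitative identification of the leading constant is where the real work lies, and it is cleanest to import it from the Euclidean $p$-Laplacian theory after the change of variables, noting that the normalization constant $C_{p,d,\mathbb{A}}$ in \eqref{fundamental_soln} was chosen in Lemma~\ref{lem-fs} exactly so that $\gm$ has unit mass, matching the Euclidean normalization transported through $y=\mathbb{B}^{-1}x$.
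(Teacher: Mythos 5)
Your affine change of variables $y=\mathbb{A}^{-1/2}x$ (which turns $-\Delta_{p,\mathbb{A}}$ into a constant multiple of the ordinary $p$-Laplacian and $|x|_{\mathbb{A}^{-1}}$ into $|y|$) is a legitimate route that the paper does not take. The paper stays in the original coordinates, works on ellipsoidal shells $\partial E_\mathbb{A}(r)$, cites Serrin \cite{serin_1,serin_2} only for the two-sided bound $u\asymp\gm$, and then performs the dilation-plus-strong-comparison argument to upgrade $\asymp$ to convergence of the ratio. Your reduction lets one instead invoke the sharp Kichenassamy--V\'eron result \cite{veron,veron_E} for the Euclidean $p$-Laplacian and transfer it back; that is a genuine simplification of the first half of the proof.

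However, there are two substantive problems in the remainder of your proposal.

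First, your step 4 (the flux-matching argument designed to pin the constant to $1$) is aimed at a claim the theorem does not make. Read in the light of its own proof --- which concludes $\lim_{r\to 0}\|u/\gm-M\|_{L^\infty(\partial E_\mathbb{A}(r))}=0$ for a \emph{solution-dependent} constant $M>0$ --- and of its use in Theorem~\ref{regularpoint_lap} (where only the existence of the limit $\lim_{x\to 0}u(x)/v(x)$ is needed), the symbol $\sim$ here means that the quotient $u/\gm$ tends to a finite positive constant, not that this constant equals $1$. Indeed the constant cannot be $1$ in general: a positive solution with a non-removable singularity satisfies $-\Delta_{p,\mathbb{A}}u=c\,\delta_0$ for an arbitrary $c>0$, so the flux you would compute through $\partial E_\mathbb{A}(r)$ is $-c$, not $-1$, and your matching would give $L^{p-1}=c$, not $L^{p-1}=1$. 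Thus the ``main obstacle'' you anticipate does not exist; your steps 1--3 already prove exactly what is asserted, and step 4 is both unnecessary and, as written, false.

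Second, your description of the dilation step has the logic inverted. You write that Lemma~\ref{lemma_1} together with the uniform Harnack inequality (Theorem~\ref{uniform_harnack}) already forces $A_r$ and $a_r$ ``to converge to a common limit $L$'', which then ``forces $u_\infty=L\gm$''. In fact those two results give only the eventual monotonicity $A_r\downarrow A$, $a_r\uparrow a$ with $a\leq A\leq C a$; they do \emph{not} give $A=a$. The paper closes this gap by passing to the limiting dilated solution $u_\infty$, showing $\sup_{\partial E_\mathbb{A}(R)}u_\infty/\gm=A$ and $\inf_{\partial E_\mathbb{A}(R)}u_\infty/\gm=a$ for every $R$, and only then invoking the \emph{strong comparison principle} (\cite[Theorem~3.2]{frass_pinchover}, applicable to $\Delta_{p,\mathbb{A}}$ because $\nabla\gm\neq 0$) to conclude $u_\infty=A\gm=a\gm$, hence $A=a$. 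That strong comparison step is the crux of the argument and cannot be absorbed into the Harnack machinery as your proposal suggests. If you wish to avoid it altogether, you should simply rest on the transferred Kichenassamy--V\'eron asymptotics from steps 1--3, which already yield the full convergence of $u/\gm$.
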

\begin{remark}
	{\em For the case when $\mathbb{A}=I$, Theorem~\ref{assythm_1} has been proved in \cite[Theorem 1.1 and \cite{veron_E}]{veron}, see also \cite{gv}. 
		We give a slightly different proof of Theorem~\ref{assythm_1} by using Lemma~\ref{lemma_1}.}
\end{remark}
\begin{proof}[Proof of Theorem~\ref{assythm_1}] Assume that $1<p<d$, the proof for the case when $p = d$ needs only minor modifications, and therefore, it is omitted. 
	
	It is known \cite{serin_1, serin_2} that any positive solution $v$ of the equation $-\Delta_{p,\mathbb{A}}(u)=0$ in a punctured ball $B_r\setminus \{0\}$ has either a removable singularity at $0$, or else,  
	\begin{equation*}
	v(x)\asymp \mu(x) \qquad \mbox{as}\,\,x\rightarrow 0.
	\end{equation*}
	Since $u$ has a nonremovable singularity at $0$, it follows that there exists $C>0$ such that  $C^{-1}\mu(x)\leq v(x)\leq C\mu(x)$ for all $x$ in a small punctured neighbourhood of $0$.

	Let $\{x_n\}$ be a sequence converging to $0$. Denote $r_n=|x_n|_{\mathbb{A}^{-1}}$, and  define
	$$M_n:= \underset{\partial E_\mathbb{A}(r_n)}{\sup}\frac{u(x)}{\mu(x)}\,, \qquad m_n:=\underset{\partial E_\mathbb{A}(r_n)}{\inf}\frac{u(x)}{\mu(x)}\, .$$ 
	Then the sequence $\{M_n\}$ is bounded and bounded away from $0$. Moreover, by Lemma \ref{lemma_1} and Remark~\ref{mono_remrk}, the sequence $\{M_n\}$ is finally monotone. Let $M:=\lim_{n\rightarrow\infty}M_n$. Then 
	$$\lim_{n\rightarrow\infty}\big[\underset{\partial E_\mathbb{A}(r_n)}{\sup}(u/\mu-M)\big]=0.$$
	Fix $x_0\in \mathbb{R}^d$ such that $r_nx_0\in \Omega^*=\Omega\setminus\{0\}$ for all $n\in \mathbb{N}$ and consider the following functions
	$$u_n(x):= \frac{u(r_nx)}{\mu(r_nx_0)},\,\,\text{ and }\,\, \mu_n(x):= \frac{\mu(r_nx)}{\mu(r_nx_0)}.$$
	Then $u_n$ and $\mu_n$ are positive solution of the equation 
	$-\Delta_{p,\mathbb{A}}(w)=0$ in $\Omega^*/r_n$.
	Note that $\mu_n(x)=|x_0|_{\mathbb{A}^{-1}}^{\frac{d-p}{p-1}}|x|_{\mathbb{A}^{-1}}^{\frac{p-d}{p-1}}$ with $\mu_n(x_0)=1$, hence, $\mu_n$ does not depend on $n$.   On the other hand, $u_n(x_0)\asymp 1$, hence, the Harnack convergence principle implies that, up to a subsequence,
	$$\lim_{n\rightarrow \infty} u_n(x)=u_\infty(x)$$
	locally uniformly in $\mathbb{R}^d\setminus\{0\}$ and $u_\infty$ is a positive solution of 
	$-\Delta_{p,\mathbb{A}}(w)=0 \,\,\text{in}\,\,\mathbb{R}^d\setminus\{0\}$.
	Then for any fixed $R>0$, as in Proposition~\ref{regularity_prop}, it follows that
$$M=\underset{x\in \partial E_\mathbb{A}(R)}{\sup} \frac{u_\infty(x)}{\mu(x)}\,.$$
Hence, for any $R>0$, we have $u_\infty(x)\leq M \mu(x)$ for all $x\in \partial E_\mathbb{A}(R)$. Note that $\nabla \mu\neq 0$. Recall  the strong comparison principle, \cite[Theorem 3.2]{frass_pinchover} which is proved for the case where the principal part of the operator $Q$ is the $p$-Laplacian. Nevertheless, it is easy to check that the proof applies also to our setting, and in particular, for the $(p,\mathbb{A})$-operator. Hence, the  strong comparison principle implies that $u_\infty(x)= M \mu(x)$.
	
	
	Similarly, let $m:= \underset{n\rightarrow \infty}{\lim} m_n$, it follows that  for any $R>0$, we have $m\mu(x)\leq u_\infty(x)$ for all $x\in \partial E_\mathbb{A}(R)$, and consequently, $u_\infty(x)= m \mu(x)$. 
	Therefore,  $M=m$, and this implies that   
$$\lim_{n\rightarrow\infty}\left\|u/\mu-M\right\|_{L^\infty(\partial E_\mathbb{A}(|x_n|_{\mathbb{A}^{-1}})}=0.$$	
	In other words, $u$ is almost equal to $M\mu$ on a sequence of concentric ellipsoids converging
	to $0$. Using the WCP in the annuli 
	$$\mathcal{A}_n:=\{|x_{n+1}|_{\mathbb{A}^{-1}}\leq |x|_{\mathbb{A}^{-1}}\leq |x_{n}|_{\mathbb{A}^{-1}}\}, \quad n\geq 1,$$ 
	it follows that 
	\vspace{-3mm}
	$$\lim_{r\rightarrow 0}\|u/\mu-M\|_{L^\infty(\partial E_\mathbb{A}(r))}=0 .$$	
	Finally we note that it can be easily verified that $M$ is independent of the choice of the sequence $\{x_n\}$. Thus, the theorem is proved.		
\end{proof}
Similar to the case of the $p$-Laplacian in $\R^d$ we have:
\begin{theo}\label{criticality}
Assume that $\mathbb{A}\in\mathbb{R}^{d\times d}$ is a symmetric, positive definite matrix. Then the operator $-\Delta_{p,\mathbb{A}}$ is critical in $\R^d$ if and only if $p\geq d$.
\end{theo}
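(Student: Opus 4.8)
The plan is to use the characterization of criticality via Theorem~\ref{theorem:main} together with the explicit fundamental solution from Lemma~\ref{lem-fs} and the asymptotic result Theorem~\ref{assythm_1}. The statement splits naturally: $p<d$ (subcritical), $p=d$ (critical), and $p>d$ (critical).

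\emph{Case $p<d$.} Here $\mu(x)=C_{p,d,\mathbb{A}}|x|_{\mathbb{A}^{-1}}^{(p-d)/(p-1)}$ is a \emph{positive} $(p,\mathbb{A})$-harmonic function on $\R^d\setminus\{0\}$ (by Lemma~\ref{lem-fs}) which tends to $0$ at infinity and blows up at the origin. I would exhibit a nonnegative $W\not\equiv 0$ in $M^q_{\rm loc}(p;\R^d)$ satisfying the subcriticality inequality \eqref{ineq:subcriticality}; the cleanest route is to note that a positive $(p,\mathbb{A})$-harmonic function exists on all of $\R^d$ (the constant function $1$), while $-\Delta_{p,\mathbb{A}}$ is \emph{not} critical because the ground state (a positive solution of minimal growth at infinity) would be unique by Theorem~\ref{theorem:main}(iii), yet the constants and $\mu$ are both positive supersolutions that are not proportional — contradicting Theorem~\ref{theorem:main}(ii). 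More directly: the Green function $\mu$ with pole at $0$ is a positive solution of $-\Delta_{p,\mathbb{A}}u=0$ in $\R^d\setminus\{0\}$ which does \emph{not} have minimal growth at infinity in $\R^d$ together with a strictly smaller positive supersolution available (e.g.\ $\varepsilon\mu$), so $-\Delta_{p,\mathbb{A}}$ admits more than one positive supersolution up to scaling, which by Theorem~\ref{theorem:main} forces subcriticality.

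\emph{Case $p\geq d$.} Here I must show $-\Delta_{p,\mathbb{A}}$ is critical, i.e.\ (by Theorem~\ref{theorem:main}) that the equation $-\Delta_{p,\mathbb{A}}u=0$ in $\R^d$ has a \emph{unique} positive supersolution up to a multiplicative constant. Suppose $v$ is a positive supersolution on $\R^d$. Restricting to a punctured neighbourhood of $0$, $v$ is a positive supersolution there; by the Harnack-type / removable-singularity dichotomy for $(p,\mathbb{A})$-harmonic functions (as used in the proof of Theorem~\ref{assythm_1}, from \cite{serin_1,serin_2}) and the fact that for $p\geq d$ the fundamental solution $\mu$ is \emph{not} positive (Remark~\ref{rem_abuse}(1)), any positive supersolution near $0$ is bounded, hence has a removable singularity and extends to a positive supersolution on all of $\R^d$. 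A positive supersolution of $-\Delta_{p,\mathbb{A}}$ on $\R^d$ that is bounded below by a positive constant near a point is then, by applying Theorem~\ref{assythm_1} at $\zeta=\infty$ (equivalently, by the inversion $x\mapsto x/|x|_{\mathbb{A}^{-1}}^2$, which conjugates the behaviour at $0$ to the behaviour at $\infty$), forced to converge to a constant at infinity; and a global positive supersolution bounded above and below is constant by the comparison principle (Theorem~\ref{weak_comparison}) applied on large ellipsoids $E_{\mathbb{A}}(R)$. Thus every positive supersolution is constant, and criticality follows from Theorem~\ref{theorem:main}(ii).

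\emph{Main obstacle.} The delicate point is the case $p>d$ (and the borderline $p=d$): ruling out nonconstant positive supersolutions of $-\Delta_{p,\mathbb{A}}u=0$ on all of $\R^d$. For $p\geq d$ one cannot simply invoke a fundamental solution of the right sign, so the argument must go through the removable-singularity dichotomy at $0$ \emph{and} the asymptotic normalization at $\infty$. The transfer between ``$\zeta=0$'' and ``$\zeta=\infty$'' via the $\mathbb{A}$-inversion $x\mapsto \mathbb{A}^{-1}x/|x|_{\mathbb{A}^{-1}}^{2}$ needs to be checked to preserve the $(p,\mathbb{A})$-Laplace structure (this is the $(p,\mathbb{A})$-analogue of the Kelvin transform), and one must confirm that Theorem~\ref{regularpoint_lap}(ii) plus Theorem~\ref{assythm_1} applied at infinity indeed pin down the supersolution. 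Once that reduction is in place, the comparison principle on ellipsoidal annuli closes the argument. I expect the write-up to lean on ``the same computation as in Lemma~\ref{lem-fs} shows the $\mathbb{A}$-inversion is conformal-like for $-\Delta_{p,\mathbb{A}}$'' rather than a fresh calculation.
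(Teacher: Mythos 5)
Your proposal does not follow the paper's route, and both directions contain real gaps.

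For $p<d$, the paper simply invokes the Hardy inequality
$\int_{\R^d}|\nabla \varphi|^p\dx \geq \left(\tfrac {d-p}{p}\right)^p\int_{\R^d}\frac{|\varphi|^p}{1+|x|^p}\dx$,
which, combined with the uniform ellipticity of $\mathbb{A}$, immediately produces a nonzero nonnegative $W\in M^q_{\rm loc}(p;\R^d)$ verifying \eqref{ineq:subcriticality}. Your alternative route — treating the constants and the fundamental solution $\mu$ as two nonproportional positive supersolutions on $\R^d$ and invoking Theorem~\ref{theorem:main}(ii) — fails on a regularity point: for $p<d$ one has $|\nabla\mu|\sim|x|^{(1-d)/(p-1)}$ near the origin, so $|\nabla\mu|^p$ is \emph{not} locally integrable there, hence $\mu\notin W^{1,p}_{\rm loc}(\R^d)$ and $\mu$ is not a weak supersolution on $\R^d$ in the sense required by the paper (the distributional identity $-\Delta_{p,\mathbb{A}}\mu=\delta_0$ in Lemma~\ref{lem-fs} lives outside the $W^{1,p}_{\rm loc}$ framework). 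The Liouville-comparison heuristic ``$1$ has too much growth to be a ground state'' is morally the right intuition but would need a separate argument rather than an appeal to $\mu$ as a supersolution.

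For $p\geq d$, the paper constructs the ground state directly: it solves the Dirichlet problems \eqref{eq:minimal} in $B_k\setminus\overline{B_1}$, passes to an increasing limit $w\leq 1$ of minimal growth at infinity, and then uses the barrier $1-\varepsilon|\gm|$ together with $|\gm(x)|\to\infty$ and the WCP to force $w\equiv 1$, identifying $u_0\equiv 1$ as a ground state via Theorem~\ref{theorem:main}(iii). Your route instead attempts to show that \emph{every} positive supersolution is constant, but two of its steps are not justified: (a) you apply Theorem~\ref{assythm_1} (and the Kelvin transform) to deduce that a supersolution has a constant limit at infinity, yet Theorem~\ref{assythm_1} is a statement about positive \emph{solutions}, not supersolutions, and a positive supersolution of $-\Delta_{p,\mathbb{A}}$ has no reason to have a limit or to be bounded; (b) the claim that a globally bounded positive supersolution must be constant ``by the comparison principle on large ellipsoids'' is exactly the Liouville statement one is trying to prove — the WCP compares two functions with prescribed boundary orderings and does not by itself collapse a bounded supersolution to a constant. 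The paper avoids both difficulties by never working with general supersolutions: it builds the ground state $1$ explicitly and lets Theorem~\ref{theorem:main} do the rest.
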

\begin{proof}
If $p<d$, then by the Hardy inequality for the $p$-Laplacian
	\begin{equation}\label{opt_hardy}
	\int_{\R^d}|\nabla \varphi|^p\dx \geq \left(\frac {d-p}{p}\right)^p\int_{\R^d} \frac{|\varphi|^p}{1+|x|^p}\dx \qquad \forall \varphi\in C_c^\infty(\R^d),
	\end{equation}	
	the operator  $-\Delta_{p,\mathbb{A}}$ is subcritical in $\R^d$.
	
	\medskip
	
	Suppose now that $p\geq d$. By Theorem~\ref{theorem:main}, the following Dirichlet problem admits a unique positive solution $w_k$:
	\begin{equation} \label{eq:minimal}
	\left\{
	\begin{array}{lr} 
	\Delta_{p,\mathbb{A}}(w_k)=0 & \qquad\mbox {in } B_k\setminus \overline{B_1}, \\
	w_k(x)=1 & \qquad \mbox{on } S_1, \\
	w_k(x)=0 & \qquad \mbox{on }  S_k.
	\end{array} \right.
	\end{equation}
	By the WCP, $\{w_k\}_{k\in\mathbb{N}}$ is an increasing sequence  satisfying $0\leq w_k\leq 1$, and therefore, converging to a positive solution $w$ of $\Delta_{p,\mathbb{A}}(v)=0$ in $\R^d\setminus \overline{B_1}$, that clearly has minimal growth at infinity in $\R^d$. Thus, it is enough to show that $w=1$ in $\R^d\setminus \overline{B_1}$. We obviously have  $w\leq 1$. On the other hand, since $|\gm(x)| \to \infty$ as $x\to\infty$, it follows that  for any $\varepsilon>0$, there is $k_{\varepsilon}$ such that $1- \varepsilon |\gm(x)|\leq w_k$ obviously on $S_1$ and also on $S_k$  for every $k\geq k_{\varepsilon}$. Invoking again  the WCP, it follows that $ 1-\varepsilon |\gm|\leq w$ in $B_k\setminus B_1$ and it follows $1-\varepsilon |\gm|\leq w$  in $\R^d\setminus B_1$. By letting $\varepsilon \to 0$, we conclude that $1\leq w$. Thus, $w=1$  in $\R^d\setminus B_1$, and $u_0=1$ is a ground state. Hence, by Theorem~\ref{theorem:main}, the operator $-\Delta_{p,\mathbb{A}}$ is critical in $\mathbb{R}^d$.
\end{proof}
\begin{cor}\label{assylem_2}
	Assume that $1<p<\infty$, and $\mathbb{A}\in\mathbb{R}^{d\times d}$ is a symmetric, positive definite matrix. Let $u$ be a positive solution of the equation $-\Delta_{p,\mathbb{A}}(u)=0$ in a neighbourhood of infinity. Then 	$\lim_{x\to \infty}u(x)$ exists in the wide sense.
	
	
	 Moreover, if $p\geq d$ (resp., $p<d$ ), then $\lim_{x\rightarrow \infty}u(x)\neq 0$ (resp,  $\lim_{x\rightarrow \infty}u(x)\neq \infty$). 
\end{cor}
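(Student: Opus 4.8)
The plan is to isolate the only nontrivial case by comparing $u$ with the constant function $1$, and then to run the dilation / Harnack--convergence argument already employed in the proofs of Theorem~\ref{assythm_1} and Proposition~\ref{regularity_prop}.

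First I would observe that $1$ is a (global) positive solution of $-\Delta_{p,\mathbb{A}}w=0$ in $\mathbb{R}^d$, that $\mathcal{Q}_{p,\mathbb{A},0}\ge 0$ trivially, and that $-\Delta_{p,\mathbb{A}}$ has an isolated Fuchsian singularity at $\infty$ in the sense of Definition~\ref{fuchsian} (condition~\eqref{ell} holds because $\mathbb{A}$ is a constant positive definite matrix, and condition~\eqref{fuchsian_eqn} is vacuous since $V\equiv 0$). Hence Lemma~\ref{lemma_1}(i), applied to the pair $(u,1)$, shows that $m:=\lim_{r\to\infty}\inf_{S_r}u$ and $M:=\lim_{r\to\infty}\sup_{S_r}u$ exist in $[0,\infty]$, while Theorem~\ref{uniform_harnack} (the uniform Harnack inequality at $\infty$), again applied to $(u,1)$, gives $\sup_{S_r}u\le C\inf_{S_r}u$ for all $r$ near $\infty$ with $C$ independent of $r$; letting $r\to\infty$ yields $M\le Cm$. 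Thus $m=0\iff M=0$ and $m=\infty\iff M=\infty$, so in those two cases $\lim_{x\to\infty}u$ exists in the wide sense (equal to $0$, resp.\ to $\infty$), and it remains to treat the case $0<m\le M<\infty$, i.e.\ $u\asymp 1$ near $\infty$.

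In that case I would choose $x_n\to\infty$, set $R_n:=|x_n|$, fix a reference point $x_0$, and put $u_n(x):=u(R_nx)/u(R_nx_0)$. Since $\mathbb{A}$ is constant and $V\equiv 0$ we have $A_{R_n}\equiv\mathbb{A}$ and $V_{R_n}\equiv 0$, so each $u_n$ is a positive solution of the \emph{same} equation $-\Delta_{p,\mathbb{A}}w=0$, now in $\Omega'/R_n$, with $u_n(x_0)=1$; as $n\to\infty$ these domains exhaust $\mathbb{R}^d\setminus\{0\}$. By the Harnack convergence principle (Proposition~\ref{covergence_principle}) a subsequence of $\{u_n\}$ converges locally uniformly to a positive $(p,\mathbb{A})$-harmonic function $u_\infty$ on $\mathbb{R}^d\setminus\{0\}$. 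Because $u\asymp 1$, the numbers $u(R_nx_0)$ lie in a fixed compact subset of $(0,\infty)$, so after passing to a further subsequence $u(R_nx_0)\to L\in(0,\infty)$; then, exactly as in the proof of Proposition~\ref{regularity_prop}, $\sup_{S_R}u_\infty=\lim_n M_{RR_n}/u(R_nx_0)=M/L$ and $\inf_{S_R}u_\infty=m/L$ for \emph{every} $R>0$. Hence $u_\infty\le M/L$ on $\mathbb{R}^d\setminus\{0\}$ and this bound is attained on each (compact) sphere $S_R$, so $M/L-u_\infty$ is a nonnegative $(p,\mathbb{A})$-harmonic function vanishing at an interior point; the local Harnack inequality (strong maximum principle) then forces $u_\infty\equiv M/L$, whence $m=M$ and $\lim_{x\to\infty}u$ exists. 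This establishes the first assertion.

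For the ``moreover'' part, suppose first $p\ge d$ and, for contradiction, $\lim_{x\to\infty}u=0$. Writing $\Omega'=\{|x|>R_0\}$ and $\gamma:=\min_{S_{R_0}}u>0$ ($u$ is continuous by elliptic regularity), let $w_k$ solve $-\Delta_{p,\mathbb{A}}w_k=0$ in $B_k\setminus\overline{B_{R_0}}$ with $w_k=\gamma$ on $S_{R_0}$ and $w_k=0$ on $S_k$ (Lemma~\ref{lem_Dirichlet}). The weak comparison principle (Theorem~\ref{weak_comparison}) gives $w_k\le u$ in $B_k\setminus\overline{B_{R_0}}$ and $w_k\nearrow w$, a positive solution in $\Omega'$ with $w=\gamma$ on $S_{R_0}$ of minimal growth at $\infty$; running the comparison argument from the proof of Theorem~\ref{criticality}, now with $\gamma-\varepsilon|\gm|$ in place of $1-\varepsilon|\gm|$ and using $|\gm(x)|\to\infty$ as $x\to\infty$ (valid since $p\ge d$), one gets $w\equiv\gamma$, hence $u\ge\gamma>0$ near $\infty$, contradicting $\lim_{x\to\infty}u=0$. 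If instead $p<d$, then, granted the first part, the claim $\lim_{x\to\infty}u\ne\infty$ is equivalent to the boundedness of $u$ near $\infty$; this is a classical fact for positive solutions of such quasilinear equations in exterior domains when $p<d$, and it can also be derived from the subcriticality of $-\Delta_{p,\mathbb{A}}$ in $\mathbb{R}^d$ (Theorem~\ref{criticality}) --- if one wishes, after the linear change of variables $x=\mathbb{A}^{1/2}y$ that turns $-\Delta_{p,\mathbb{A}}$ into the $p$-Laplacian --- by comparison with constants and with the bounded fundamental solution $\gm$. I expect this last point to be the main obstacle: in contrast with the case $p\ge d$, there is no ``fundamental solution with pole at $\infty$'' available, so one must invoke (or reprove) that for $p<d$ a positive $(p,\mathbb{A})$-supersolution in a neighbourhood of infinity cannot be unbounded.
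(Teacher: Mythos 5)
Your argument for the first assertion is correct but considerably more circuitous than necessary. After establishing (via Lemma~\ref{lemma_1} with $v\equiv1$) that $m=\lim m_r$ and $M=\lim M_r$ exist in $[0,\infty]$, and disposing of the boundary cases $m=0$, $m=\infty$ via the uniform Harnack inequality, the paper handles the remaining case $0<m\leq M<\infty$ with a one-line trick: for each $\epsilon>0$ the function $u-m+\epsilon$ is again a positive $(p,\mathbb{A})$-harmonic function near $\infty$ (subtracting a constant leaves the gradient unchanged, so the absence of a potential is crucial), and applying Theorem~\ref{uniform_harnack} to the pair $(u-m+\epsilon,\,1)$ gives $M_r-m+\epsilon\leq C(m_r-m+\epsilon)$ with $C$ independent of $\epsilon$; letting $r\to\infty$ and then $\epsilon\to0$ forces $M=m$. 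Your dilation/Harnack-convergence argument reaches the same conclusion, and your use of the strong maximum principle for $M/L-u_\infty$ (which \emph{is} $(p,\mathbb{A})$-harmonic, again because $V\equiv0$) is sound, but it imports much heavier machinery for a situation the paper resolves elementarily.

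Your argument for the $p\geq d$ case of the ``moreover'' part is correct; it essentially re-runs the construction underlying Theorem~\ref{criticality}. The paper shortens this by invoking the conclusion of Theorem~\ref{criticality} directly: for $p\geq d$, $-\Delta_{p,\mathbb{A}}$ is critical in $\R^d$ with ground state $u_0\equiv1$, and since a ground state has minimal growth at infinity, a positive solution near $\infty$ cannot be bounded above on each sphere by a quantity tending to $0$; concretely, comparing $u/\inf_{S_R}u$ with $1$ via minimal growth gives $u\geq\inf_{S_R}u>0$ outside $B_R$.

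The genuine gap is the $p<d$ case, which you yourself flag as ``the main obstacle''. You assert the conclusion ``is a classical fact'' and gesture at a change of variables, but give no argument. The paper's proof is short and uses exactly the ingredient you are looking for: suppose $\lim_{x\to\infty}u(x)=\infty$. Then repeat the proof of Theorem~\ref{criticality} replacing the role of $|\gm|$ by $u$. That is, define $w_k$ as in \eqref{eq:minimal}; on $S_1$ one has $1-\vge u\leq 1=w_k$ (after shrinking $\vge$), and since $\min_{S_k}u\to\infty$ one has $1-\vge u<0=w_k$ on $S_k$ for $k$ large. As $1-\vge u$ is again $(p,\mathbb{A})$-harmonic, the WCP yields $1-\vge u\leq w_k$, hence $1-\vge u\leq w$, hence (letting $\vge\to0$) $w\geq1$, while $w\leq1$ is automatic. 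Thus $w\equiv1$ and $1$ is an entire positive solution of minimal growth at infinity, i.e.\ a ground state, so $-\Delta_{p,\mathbb{A}}$ would be critical in $\R^d$---contradicting the subcriticality in Theorem~\ref{criticality} for $p<d$. The missing idea in your proposal is precisely to use the hypothetical unbounded solution $u$ itself as the cutoff function in the capacity/comparison argument, rather than searching for a ``fundamental solution with pole at $\infty$''.
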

\begin{proof}
	From Lemma~\ref{lemma_1} (with $v=1$), it follows that the functions given by
	$$m_r:=\inf_{x\in S_r} u(x),\quad M_r:=\sup_{x\in S_r}u(x)$$
	are monotone for large enough $r$. If ${\lim}_{r\rightarrow\infty}m_r=\infty$, then  clearly $\lim_{x\rightarrow \infty}u(x)=\infty$.
	
	
	Assume now that $m={\lim}_{r\rightarrow\infty}m_r<\infty$. Then for any $\epsilon>0$ the function $u-m+\epsilon$ is a positive solution of $\Delta_{p,\mathbb{A}}(w)=0$ in some neighbourhood infinity. Then by the uniform Harnack inequality \eqref{uniform_harnack}, we get
	\vspace{-3mm}
	$$M_r-m+\epsilon\leq C(m_r-m+\epsilon),$$
	for large enough $r$. By taking $r\rightarrow\infty$, we get $0\leq M-m\leq (C-1)\epsilon$. This implies that $M=m<\infty$, and $\underset{|x|\rightarrow\infty}{\lim}u(x)=m=M<\infty$. Thus, $u$ has a finite limit as $x\rightarrow\infty$.
	

	 Let $p<d$, and suppose that there exists a positive $(p,\mathbb{A})$-harmonic function $u$  in a neighbourhood such that $\lim_{x\rightarrow \infty}u(x) = \infty$. By repeating the proof of Theorem~\ref{criticality}, with $u$ replacing $\gm$, it would follow that $-\Delta_{p,\mathbb{A}}$  is critical in $\R^d$, a contradiction to Theorem~\ref{criticality}. 
	
	It remains to prove that  $\displaystyle{\lim_{x\rightarrow \infty}u(x)}\neq 0$ if $p\geq d$. By Theorem~\ref{criticality}, $\Delta_{p,\mathbb{A}}$ is critical in $\R^d$ with a ground state $u_0=1$.  Since a ground state is an entire positive of minimal growth at infinity, it follows that $\lim_{x\rightarrow \infty}u(x)\neq 0$.  Hence, the lemma follows.
\end{proof}
Next we discuss the asymptotic behaviour of positive $(p,\mathbb{A})$-harmonic functions at $\infty$ for $p\!\geq \! d$.  
\begin{theo}\label{assythm_2}
	Assume that $p\geq d$ and $\mathbb{A}\in\mathbb{R}^{d\times d}$ is a symmetric, positive definite matrix. Let $u$ be a positive solution of the equation $-\Delta_{p,\mathbb{A}}(w)=0$ in a neighbourhood of infinity in $\mathbb{R}^d$. Then either $u$ has a (finite) positive limit as $x\rightarrow\infty$, or
	\begin{equation*}
	u(x)\underset{x\rightarrow \infty}{\sim} -\gm(x),
	\end{equation*}
	where $\gm$ is the fundamental solution of $-\Delta_{p,\mathbb{A}}$  in $\R^d$ given by \eqref{fundamental_soln}.
\end{theo}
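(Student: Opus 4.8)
The plan is to follow closely the argument used for Theorem~\ref{assythm_1}, interchanging the roles of $0$ and $\infty$ and of $\gm$ and $-\gm$. Recall from Lemma~\ref{lem-fs} that for $p\geq d$ the function $v:=-\gm$ is positive in a punctured neighbourhood of $\infty$: indeed $-\gm(x)=|C_{p,d,\mathbb A}|\,|x|_{\mathbb A^{-1}}^{(p-d)/(p-1)}$ when $p>d$ and $-\gm(x)=C_{d,\mathbb A}\log|x|_{\mathbb A^{-1}}$ when $p=d$; in both cases $-\gm(x)\to+\infty$ as $x\to\infty$, and $-\Delta_{p,\mathbb A}(-\gm)=0$ in $\R^d\setminus\{0\}$ (resp.\ in $\{|x|_{\mathbb A^{-1}}>1\}$). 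Thus both $u$ and $v=-\gm$ lie in $\mathcal G_\infty$. By Corollary~\ref{assylem_2}, $L:=\lim_{x\to\infty}u(x)$ exists in the wide sense and, since $p\geq d$, $L\neq 0$. If $L<\infty$ the first alternative holds, so from now on we assume $L=+\infty$.

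The first genuine step is the two-sided bound $u\asymp -\gm$ near $\infty$. I would deduce this from the Serrin-type removable-singularity dichotomy at an isolated singular point (\cite{serin_1,serin_2}): since $u$ is unbounded, there is $C>0$ with $C^{-1}(-\gm(x))\leq u(x)\leq C(-\gm(x))$ for all large $|x|$. When $p\neq d$ one may reduce to the constant-coefficient case $\mathbb A=I$ (that is, to the $p$-Laplacian) via the linear change of variables $x=By$ with $\mathbb A=B^2$, $B$ symmetric positive definite, which maps $(p,\mathbb A)$-harmonic functions to $p$-harmonic functions and $|x|_{\mathbb A^{-1}}$ to $|y|$; when $p=d$ one may instead combine the Kelvin transform $x\mapsto x/|x|^2$, under which the $d$-Laplacian is invariant, with Theorem~\ref{assythm_1}. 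This estimate is the only truly new analytic ingredient, and I expect it to be the main obstacle.

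Granting the bound, I run the rescaling/compactness scheme exactly as in the proofs of Theorem~\ref{assythm_1} and Proposition~\ref{regularity_prop}. Fix $x_0$ with $|x_0|_{\mathbb A^{-1}}=1$, take any $x_n\to\infty$, set $r_n:=|x_n|_{\mathbb A^{-1}}\to\infty$ and
$$M_n:=\sup_{\partial E_{\mathbb A}(r_n)}\frac{u}{-\gm}\,,\qquad m_n:=\inf_{\partial E_{\mathbb A}(r_n)}\frac{u}{-\gm}\,.$$
By the bound these lie in $[C^{-1},C]$, and by Lemma~\ref{lemma_1}(i) together with Remark~\ref{mono_remrk} (applied to $u$ and $v=-\gm$) they are finally monotone; hence $M:=\lim_n M_n=\lim_{r\to\infty}M_r$ and $m:=\lim_n m_n=\lim_{r\to\infty}m_r$ exist in $(0,\infty)$ and do not depend on the sequence $\{x_n\}$. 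Put $u_n(x):=u(r_nx)/(-\gm)(r_nx_0)$ and $\mu_n(x):=(-\gm)(r_nx)/(-\gm)(r_nx_0)$; each $u_n$ and $\mu_n$ is $(p,\mathbb A)$-harmonic on a domain increasing to $\R^d\setminus\{0\}$, and $u_n(x_0)\asymp 1$. By the Harnack convergence principle (Proposition~\ref{covergence_principle}), along a subsequence $u_n\to u_\infty$ locally uniformly in $\R^d\setminus\{0\}$ to a positive $(p,\mathbb A)$-harmonic function. Moreover $\mu_n\to\mu_\infty$ locally uniformly: for $p>d$ the homogeneity $(-\gm)(r_nx)=r_n^{(p-d)/(p-1)}(-\gm)(x)$ gives $\mu_n\equiv\mu_\infty$ with $\mu_\infty(x)=|x|_{\mathbb A^{-1}}^{(p-d)/(p-1)}$ (so $\nabla\mu_\infty\neq0$ on $\R^d\setminus\{0\}$), while for $p=d$ one has $\mu_n(x)=1+\log|x|_{\mathbb A^{-1}}/\log r_n\to 1$, so $\mu_\infty\equiv1$.

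As in the proof of Theorem~\ref{assythm_1}, the local uniform convergence together with $\lim_{r\to\infty}M_r=M$ yields, for every $R>0$,
$$\sup_{\partial E_{\mathbb A}(R)}\frac{u_\infty}{\mu_\infty}=M,\qquad \inf_{\partial E_{\mathbb A}(R)}\frac{u_\infty}{\mu_\infty}=m.$$
For $p>d$, since $\nabla\mu_\infty\neq0$, the strong comparison principle \cite[Theorem~3.2]{frass_pinchover} (valid for the $(p,\mathbb A)$-operator, as already noted) forces $u_\infty\equiv M\mu_\infty\equiv m\mu_\infty$; for $p=d$ the same conclusion follows from the strong maximum principle for $(p,\mathbb A)$-harmonic functions on the annuli $E_{\mathbb A}(R_2)\setminus\overline{E_{\mathbb A}(R_1)}$. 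In either case $M=m$, whence $\|u/(-\gm)-M\|_{L^\infty(\partial E_{\mathbb A}(r_n))}\to 0$. Finally, applying the weak comparison principle (Theorem~\ref{weak_comparison}) with the supersolution $(M+\eps)(-\gm)$ and the subsolution $(M-\eps)(-\gm)$ on the annular domains bounded by consecutive ellipsoids $\partial E_{\mathbb A}(r_n)$ and $\partial E_{\mathbb A}(r_{n+1})$ upgrades this to $\|u/(-\gm)-M\|_{L^\infty(\partial E_{\mathbb A}(r))}\to 0$ as $r\to\infty$. Since $M\in(0,\infty)$ is independent of $\{x_n\}$, we conclude $u(x)/(-\gm(x))\to M$, that is, $u(x)\underset{x\to\infty}{\sim}-\gm(x)$, which (interpreted up to a positive multiplicative constant, as in Theorem~\ref{assythm_1}) is the asserted dichotomy.
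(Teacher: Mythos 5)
Your proposal correctly mirrors the overall architecture of the paper's proof, and the rescaling/compactness scheme you run once you have the two-sided bound $u\asymp -\gm$ near $\infty$ is sound (granting the bound, your argument with $u_n$, $\mu_n$, the Harnack convergence principle, the strong comparison or strong maximum principle, and the final WCP interpolation is essentially the paper's Theorem~\ref{assythm_1} transplanted to $\gz=\infty$, and it works). However, there is a genuine gap precisely where you flag it: the two-sided bound $C^{-1}(-\gm)\leq u\leq C(-\gm)$ near infinity for $p>d$ does \emph{not} follow from the Serrin dichotomy in \cite{serin_1,serin_2}. Serrin's theorems classify isolated singularities at a \emph{finite} point for $p\leq d$; they do not address the asymptotics at $\infty$ of an unbounded positive $p$-harmonic function when $p>d$. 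Changing variables so that $\mathbb A=I$ reduces to the $p$-Laplacian, but the resulting problem is still an exterior-domain problem at $\infty$ with $p>d$, which is outside the scope of those citations. (Your Kelvin-transform reduction is fine when $p=d$, since $-\Delta_{d,\mathbb A}$ is conformally invariant, but for $p>d$ the Kelvin transform turns $-\Delta_{p,\mathbb A}$ into the \emph{weighted} operator $-\div(|x|_{\mathbb A^{-1}}^{2(p-d)}|\nabla v|_{\mathbb A}^{p-2}\mathbb A\nabla v)$ near the origin, and Serrin's results do not cover that weighted equation either.)

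The paper closes exactly this gap with new work: Lemma~\ref{assylem_1} (the generalized Kelvin transform, producing the weighted equation $-\div(B(v))=0$), Lemma~\ref{assylem_3} (a flux identity $\int B[v_c]\cdot\nabla\varphi=C>0$), and Lemma~\ref{assylem_4} (a weighted $p$-capacity computation with the weight $|x|^{\beta}$, $\beta=2(p-d)$, giving $v_c\asymp|x|_{\mathbb A^{-1}}^{(d-p)/(p-1)}$ near the origin and hence $u\asymp|x|_{\mathbb A^{-1}}^{(p-d)/(p-1)}$ near $\infty$). The paper then does the blow-down $w_\sigma(x)=v(\sigma x)/\sigma^{\alpha}$ on the Kelvin-transformed function rather than directly on $u$, but this is a cosmetic difference from your scaling; the substantive difference is that the capacity argument replaces your appeal to Serrin. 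So the step you describe as "the only truly new analytic ingredient" and "the main obstacle" is indeed the step your proposal leaves unproved, and it cannot be closed by the cited references — it requires the capacity estimates that the paper supplies.
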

 To show this theorem, we use a Kelvin-type transform (see, Definition A.1 of \cite{frass_pinchover} for $\mathbb{A}=I$). 
 \begin{definition}{\em
 For $x\in \mathbb{R}^d$, we denote by $\tilde{x}:= x/|x|^2_{\mathbb{A}^{-1}}$. Then $\tilde{x}$ is the {\em inverse point with respect to the ellipsoid} $E_\mathbb{A}(1)$. In particular,  $|\tilde{x}|_{\mathbb{A}^{-1}}\!=\!1/|x|_{\mathbb{A}^{-1}}$.  Let $u$ be a function either defined in the ellipsoid $E_\mathbb{A}(1)\setminus\{0\}$, or on $\mathbb{R}^d\setminus E_\mathbb{A}(1)$. The {\em generalized Kelvin transform} of $u$ is given by 
 $$K[u](x):= u(\tilde{x})= u(x/|x|^2_{\mathbb{A}^{-1}}).$$
}
 \end{definition}
 For $p\!=\!d$, the Dirichlet integral $\int_\Omega|\nabla u|_\mathbb{A}^d \,\d x$ is conformally invariant since $\lambda_{\min}|\nabla u|^d\leq|\nabla u|_\mathbb{A}^d\leq \lambda_{\max} |\nabla u|^d $, where $\lambda_{\min}, \lambda_{\max}$ are the lowest and greatest eigenvalues of $\mathbb{A}$. The $(d,\mathbb{A})$-harmonic equation $-\Delta_{d,\mathbb{A}}(u)=0$ is therefore, invariant under the  generalized Kelvin transform. In particular, if $u$ is $(d,A)$-harmonic, then  $K[u]$ is also  $(d,\mathbb{A})$-harmonic (see also, Lemma~\ref{assylem_1}). Hence, for $p=d$,
  Theorem~\ref{assythm_2} follows from Theorem~\ref{assythm_1}.
\begin{lemma}\label{assylem_1}
	Assume that $p>d$, and let $\mathbb{A}\in\mathbb{R}^{d\times d}$ be a symmetric, positive definite matrix. Set $\beta:=2(p-d)$. Suppose that $u$ is a solution of $-\Delta_{p,\mathbb{A}}(u)=0$ in a neighbourhood of infinity (respectively, in a  punctured neighbourhood of origin). 
	
	Then  $v:= K[u]$ is a solution of the equation 
	\begin{equation}
	-\emph{div}\big(B(v)\big):=-\emph{div}\,(|x|^\beta_{\mathbb{A}^{-1}}|\nabla v|^{p-2}_\mathbb{A}\mathbb{A}\nabla v)=0,
	\end{equation}
	in a  punctured neighbourhood of origin (respectively, in a neighbourhood of infinity). 
\end{lemma}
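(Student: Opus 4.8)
The plan is to realise $v=K[u]$ as the pull-back of $u$ by the inversion $\Phi(x):=x/|x|_{\mathbb{A}^{-1}}^{2}$ and to transport the divergence-form equation for $u$ along $\Phi$. Write $\varrho(x):=|x|_{\mathbb{A}^{-1}}=(\mathbb{A}^{-1}x\cdot x)^{1/2}$ and $\Psi:=|\nabla u|_{\mathbb{A}}^{p-2}\mathbb{A}\nabla u$ for the flux of the equation satisfied by $u$, so that $\operatorname{div}\Psi=0$ in the region where $u$ is defined. The map $\Phi$ is a smooth diffeomorphism of $\mathbb{R}^{d}\setminus\{0\}$ onto itself, it is an involution ($\Phi\circ\Phi=\mathrm{id}$), and it interchanges punctured neighbourhoods of the origin with neighbourhoods of infinity; hence, once the equation is verified, the asserted change of domains is automatic. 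Since $v=u\circ\Phi$, the idea is to compute the transported flux $\varrho^{\beta}|\nabla v|_{\mathbb{A}}^{p-2}\mathbb{A}\nabla v$ explicitly, to recognise it — after the special choice $\beta=2(p-d)$, which makes all the powers of $\varrho$ recombine with $\det D\Phi$ — as (minus) a Piola transform of $\Psi$, and to conclude by Piola's identity.

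The computational core is as follows. Differentiation gives $D\Phi(x)=\varrho^{-2}P$, where $P=P(x):=I-2\varrho^{-2}\,x(\mathbb{A}^{-1}x)^{T}$, and one needs three elementary identities: (a) as $P$ is a rank-one perturbation of $I$, $\det P=1-2\varrho^{-2}(\mathbb{A}^{-1}x\cdot x)=-1$, hence $\det D\Phi=-\varrho^{-2d}$; (b) expanding and using $\mathbb{A}^{-1}x\cdot x=\varrho^{2}$ yields $P\mathbb{A}P^{T}=\mathbb{A}$ and $\mathbb{A}P^{T}=P\mathbb{A}$; (c) differentiating $\Phi\circ\Phi=\mathrm{id}$ and using $\varrho(\Phi(x))=\varrho(x)^{-1}$ and $P(\Phi(x))=P(x)$ gives $(D\Phi(x))^{-1}=D\Phi(\Phi(x))=\varrho^{2}P$, that is, $P=\varrho^{-2}(D\Phi)^{-1}$. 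From $\nabla v=(D\Phi)^{T}(\nabla u\circ\Phi)=\varrho^{-2}P^{T}(\nabla u\circ\Phi)$ together with (b) we get $|\nabla v|_{\mathbb{A}}^{2}=\varrho^{-4}\bigl(|\nabla u|_{\mathbb{A}}^{2}\circ\Phi\bigr)$, and therefore
\begin{align*}
\varrho^{\beta}\,|\nabla v|_{\mathbb{A}}^{p-2}\mathbb{A}\nabla v
&=\varrho^{\beta-2(p-1)}\,\bigl(|\nabla u|_{\mathbb{A}}^{p-2}\circ\Phi\bigr)\,\mathbb{A}P^{T}(\nabla u\circ\Phi)
=\varrho^{2-2d}\,P\,(\Psi\circ\Phi)\\
&=\varrho^{-2d}\,(D\Phi)^{-1}(\Psi\circ\Phi)
=-(\det D\Phi)\,(D\Phi)^{-1}(\Psi\circ\Phi),
\end{align*}
where the second equality uses $\beta-2(p-1)=2-2d$, the identity $\mathbb{A}P^{T}=P\mathbb{A}$, and the definition of $\Psi$, while the passage to the second line uses $P=\varrho^{-2}(D\Phi)^{-1}$ and $\varrho^{-2d}=-\det D\Phi$ from (a) and (c).

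It remains to invoke Piola's identity: for a $C^{1}$ diffeomorphism $\Phi$ and a $C^{1}$ vector field $G$,
\[
\operatorname{div}\bigl((\det D\Phi)\,(D\Phi)^{-1}(G\circ\Phi)\bigr)=(\det D\Phi)\,\bigl((\operatorname{div}G)\circ\Phi\bigr).
\]
Applied to $G=\Psi$, whose divergence vanishes by assumption, this shows that the vector field computed above is divergence free, i.e.\ $v$ is a solution of $-\operatorname{div}\bigl(|x|_{\mathbb{A}^{-1}}^{\beta}|\nabla v|_{\mathbb{A}}^{p-2}\mathbb{A}\nabla v\bigr)=0$. Since $u\in W^{1,p}_{\loc}$ and $\Phi$ is a smooth diffeomorphism with smooth inverse on the annular regions in question — where the weight $\varrho^{\beta}$ is bounded above and away from zero — we have $v\in W^{1,p}_{\loc}$, and the identity is justified at the level of weak solutions by the standard change of variables $\psi\mapsto\psi\circ\Phi$ in the weak formulation. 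The argument is thus entirely computational; the only points requiring genuine care are the algebraic identity $P\mathbb{A}P^{T}=\mathbb{A}$ and the bookkeeping of the powers of $\varrho$. (Equivalently, one may first reduce to the case $\mathbb{A}=I$ by the linear substitution $x=\mathbb{A}^{1/2}z$, which turns $-\Delta_{p,\mathbb{A}}$ into $-\Delta_{p}$, sends $|x|_{\mathbb{A}^{-1}}$ to $|z|$ and $\Phi$ to the ordinary inversion $z\mapsto z/|z|^{2}$, then apply the $\mathbb{A}=I$ statement, cf.\ the appendix of \cite{frass_pinchover}, and transform back.)
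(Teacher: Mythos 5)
Your proof is correct, and it takes a genuinely cleaner route than the paper. The paper computes $\nabla v$ and $|\nabla v|_{\mathbb{A}}$ explicitly (the same two identities you obtain from $\nabla v=(D\Phi)^T(\nabla u\circ\Phi)$ and $P\mathbb{A}P^T=\mathbb{A}$) and then simply refers the reader to the analogous brute-force divergence computation in \cite[Lemma A.1]{frass_pinchover} for $\mathbb{A}=I$; you instead recognise the weighted flux as (minus) the Piola transform of $\Psi:=|\nabla u|^{p-2}_{\mathbb{A}}\mathbb{A}\nabla u$ under the $\mathbb{A}$-inversion $\Phi$ and invoke Piola's identity. This buys two things: the second differentiation is replaced by a structural identity, so the proof is self-contained rather than an appeal to a reference; and the role of the exponent $\beta=2(p-d)$ is made transparent as exactly the power needed to turn $\varrho^{\beta-2(p-1)}P$ into $\varrho^{-2d}(D\Phi)^{-1}=-(\det D\Phi)(D\Phi)^{-1}$. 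Your three auxiliary identities check out: $P$ is a rank-one perturbation of $I$ with $b^Ta=2$, hence $\det P=-1$ and $\det D\Phi=-\varrho^{-2d}$; a short expansion gives $P\mathbb{A}P^T=\mathbb{A}$ and $\mathbb{A}P^T=P\mathbb{A}=\mathbb{A}-2\varrho^{-2}xx^T$; and $P^2=I$ (equivalently, differentiating $\Phi\circ\Phi=\mathrm{id}$) gives $(D\Phi)^{-1}=\varrho^2P$. You also correctly address the weak formulation via the change of test function $\psi\mapsto\psi\circ\Phi$; since $\det D\Phi<0$, the Jacobian sign is absorbed because one only needs divergence-freeness. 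The concluding remark that the whole lemma reduces to the $\mathbb{A}=I$ case by $x=\mathbb{A}^{1/2}z$ is an even shorter alternative route, and it is the shortest way to re-use \cite{frass_pinchover} verbatim.
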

\begin{proof}
	Denote $\tilde{x_i}:= x_i/|x|^2_{\mathbb{A}^{-1}}$. By using the chain rule and \eqref{grad_qf}, it follows that
	$$\nabla v(x)=|\tilde{x}|^2_{\mathbb{A}^{-1}}\tilde{\nabla}u(\tilde{x})-2(\tilde{\nabla}u(\tilde{x})\cdot\tilde{x}) \mathbb{A}^{-1}\tilde{x},$$
	where $\tilde{\nabla}$ denotes the gradient with respect to $\tilde{x}$. Therefore, 
	\begin{align*}
	|\nabla v(x)|_\mathbb{A}^2
	\!=\!\big[\mathbb{A}|\tilde{x}|^2_{\mathbb{A}^{-1}}\tilde{\nabla}u(\tilde{x}) \! - \! 2(\tilde{\nabla}u(\tilde{x}) \! \cdot \! \tilde{x}) \tilde{x}\big]\!\cdot \!
	\big[|\tilde{x}|^2_{\mathbb{A}^{-1}}\tilde{\nabla}u(\tilde{x}) \! - \! 2(\tilde{\nabla}u(\tilde{x}) \! \cdot \! \tilde{x}) \mathbb{A}^{-1}\tilde{x}\big]
	\!=\!|\tilde{\nabla}u(\tilde{x})|_\mathbb{A}^2|\tilde{x}|^4_{\mathbb{A}^{-1}}.
	\end{align*}
	Thus, $|\nabla v(x)|_\mathbb{A}=|\tilde{\nabla}u(\tilde{x})|_\mathbb{A}|\tilde{x}|^2_{\mathbb{A}^{-1}}$.
	
	Consider $B(v)= |x|^\beta_{\mathbb{A}^{-1}}|\nabla v|^{p-2}_\mathbb{A}\mathbb{A}\nabla v$, where $\beta=2(p-d)$. Following the same steps of the computation in \cite[Lemma A.1]{frass_pinchover}, we conclude that
	$$\mbox{div}\big(B(v)\big)=\mbox{div}(|x|^\beta_{\mathbb{A}^{-1}}|\nabla v|^{p-2}_\mathbb{A}\mathbb{A}\nabla v)=|\tilde{x}|^{2d}_{\mathbb{A}^{-1}}\tilde{\Delta}_{p,\mathbb{A}}(u(\tilde{x}))=0.
	\qquad \qedhere $$
\end{proof}
\begin{remark}
\em{By Lemma~\ref{lem-fs},  $|x|^{(p-d)/(p-1)}_{\mathbb{A}^{-1}}$ is a positive $(p,\mathbb{A})$-harmonic function in the punctured space. Lemma~\ref{assylem_1} implies that   $\mbox{div}\big(B(|x|^{(d-p)/(p-1)}_{\mathbb{A}^{-1}})\big)=0$ in the punctured space.
}
\end{remark}
The following two lemmas are the analogous results for the $p$-Laplacian proved in \cite[Appendix~A]{frass_pinchover}. For the completeness, we  provide the proof. 
\begin{lemma}\label{assylem_3}
	Assume that $p\!>\!d$, and $\mathbb{A}\! \in\! \mathbb{R}^{d\times d}$ is symmetric, positive definite matrix. Let $u$ be a solution of the equation $-\Delta_{p,\mathbb{A}}(u)=0$ in a neighbourhood of infinity  with $\lim_{x\rightarrow \infty}u(x)=\infty$. Choose $R>0$ and $c>0$ such that $v_c:=K[u](x)-c$ is positive near the origin and negative on $\partial E_\mathbb{A}(R)$. Then  there exists $C>0$ such that for any $\varphi\in C_0^1(E_\mathbb{A}(R))$ which equals $1$ near the origin, we have 
	\vspace{-3mm}
	$$\int_{E_\mathbb{A}(R)}B[v_c]\cdot\nabla\varphi \d x =C.$$
\end{lemma}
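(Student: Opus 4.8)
The plan is to read $C$ as the (well-defined) ``flux'' of the vector field $B[v]$ through small ellipsoids centred at the origin, where $v:=K[u]$, and then to identify this flux with a nonnegative weighted Dirichlet energy which cannot vanish. Observe first that $B[v_c]=B[v]$, because $v_c$ and $v$ differ by the constant $c$ and so have the same gradient, and that by Lemma~\ref{assylem_1} the function $v=K[u]$ is a weak solution of $-\Div(B(v))=0$ in a punctured neighbourhood of the origin; moreover $v(x)=u(\tilde x)\to\infty$ as $x\to0$ since $u\to\infty$ at infinity, and $v$ is continuous and lies in $W^{1,p}_{\loc}$ away from the origin (e.g.\ because $u$ is $C^{1,\alpha}_{\loc}$ and the Kelvin transform is smooth off the origin), so numbers $R>0$ and $c>0$ as in the statement exist. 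The first step is to show that $\int_{E_\mathbb{A}(R)}B[v]\cdot\nabla\varphi\d x$ is independent of the admissible $\varphi$: if $\varphi_1,\varphi_2\in C_0^1(E_\mathbb{A}(R))$ both equal $1$ near the origin then $\varphi_1-\varphi_2$ has compact support in $E_\mathbb{A}(R)\setminus\{0\}$, hence is admissible in the weak formulation of $-\Div(B(v))=0$ there, so $\int B[v]\cdot\nabla\varphi_1\d x=\int B[v]\cdot\nabla\varphi_2\d x=:C$. (All these integrals converge because $B[v]\in L^{p'}_{\loc}(E_\mathbb{A}(R)\setminus\{0\})$, the weight $|x|_{\mathbb{A}^{-1}}^{\beta}$ being bounded above and below on compact subsets of the punctured ball.)

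Next I would prove $C\ge0$. For $k>0$ put $w_k:=\min\{v_c^+,k\}$; this is a bounded function with weak gradient $\nabla w_k=\1_{\{0<v_c<k\}}\nabla v_c$ supported in a compact subset of $E_\mathbb{A}(R)\setminus\{0\}$ (it stays away from $\partial E_\mathbb{A}(R)$ since $v_c<0$ there, and away from $0$ since $v_c>k$ near $0$). Choose $\rho_k\in(0,R)$ so small that $v_c>k$ on $E_\mathbb{A}(\rho_k)\setminus\{0\}$, and pick $\varphi_k\in C_0^\infty(E_\mathbb{A}(\rho_k))$ with $\varphi_k\equiv1$ on $E_\mathbb{A}(\rho_k/2)$. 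Then $w_k-k\varphi_k$ vanishes on $E_\mathbb{A}(\rho_k/2)\setminus\{0\}$ and has compact support in $E_\mathbb{A}(R)$, hence is an admissible test function in the punctured domain, so
\[
\int_{E_\mathbb{A}(R)}B[v]\cdot\nabla w_k\d x=k\int_{E_\mathbb{A}(R)}B[v]\cdot\nabla\varphi_k\d x=kC .
\]
On the other hand $\int B[v]\cdot\nabla w_k\d x=\int_{\{0<v_c<k\}}|x|_{\mathbb{A}^{-1}}^{\beta}|\nabla v_c|_{\mathbb{A}}^{p}\d x\ge0$, whence $kC\ge0$ and therefore $C\ge0$.

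Finally I would rule out $C=0$. If $C=0$, then letting $k\to\infty$ in $kC=\int_{\{0<v_c<k\}}|x|_{\mathbb{A}^{-1}}^{\beta}|\nabla v_c|_{\mathbb{A}}^{p}\d x$ forces $\nabla v_c=0$ a.e.\ on the open set $\{v_c>0\}$, which contains a punctured neighbourhood $E_\mathbb{A}(\rho_*)\setminus\{0\}$ of the origin; for $d\ge2$ this set is connected, so $v_c$ agrees a.e.\ with a constant on it, contradicting the fact that $v_c$ is continuous off the origin with $v_c(x)\to\infty$ as $x\to0$. Hence $C>0$, and $C$ is the required constant.

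I expect the main obstacle to be precisely the singularity of $v_c$ at the origin, which prevents feeding $v_c$ (or $v_c^+$) directly into the equation; this is circumvented by the truncation $w_k=\min\{v_c^+,k\}$ together with the splitting $w_k=k\varphi_k+(w_k-k\varphi_k)$, in which only the second summand --- a genuine test function supported away from the origin --- is inserted in the weak formulation. One should also watch the degeneracy of the weight $|x|_{\mathbb{A}^{-1}}^{\beta}$ at the origin, but every integral above lives where that weight is comparable to $1$; the only structural fact about the $(p,\mathbb{A})$-operator needed is the identity $|x|_{\mathbb{A}^{-1}}^{\beta}|\nabla v|_{\mathbb{A}}^{p-2}\mathbb{A}\nabla v\cdot\nabla v=|x|_{\mathbb{A}^{-1}}^{\beta}|\nabla v|_{\mathbb{A}}^{p}\ge0$.
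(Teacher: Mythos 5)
Your proof is correct and takes essentially the same approach as the paper. The paper's proof also establishes well-definedness by noting that the difference of two admissible $\varphi$ has compact support in $E_\mathbb{A}(R)\setminus\bar{E}_\mathbb{A}(0,\epsilon)$, and then identifies $C$ with the weighted Dirichlet energy $\int_{\{0<v_c<1\}}|x|^\beta_{\mathbb{A}^{-1}}|\nabla v_c|^p_\mathbb{A}\,dx$ by plugging in the truncation $\varphi_1:=\min\{v_c^+,1\}$ directly as the test function. Your version is slightly more careful in two minor respects: (a) you split $w_k=\min\{v_c^+,k\}$ into $k\varphi_k$ plus a genuine compactly supported test function $w_k-k\varphi_k$, thereby avoiding the fact that $\varphi_1$ itself is only Lipschitz rather than $C_0^1$ as the lemma nominally requires; and (b) you supply an explicit contradiction argument for $C\neq 0$ (via $\nabla v_c=0$ a.e.\ on $\{v_c>0\}$ and connectedness of a punctured ball for $d\geq 2$), whereas the paper asserts strict positivity of the integral without justification. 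Both points are cosmetic; the underlying idea --- test against a truncated version of $v_c$ itself and read off a nonnegative weighted energy --- is identical.
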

\begin{proof}
	The difference of any two such $\varphi$ has a compact support in $E_\mathbb{A}(R)\setminus\bar{E}_\mathbb{A}(0,\epsilon)$ for some $\epsilon>0$. Since $v_c$ satisfies $-\mbox{div}\,(B(v_c))=0$ in $E_\mathbb{A}(R)\setminus\bar{E}_\mathbb{A}(0,\epsilon)$, therefore it follows that 
	$$\int_{E_\mathbb{A}(R)}B[v_c]\cdot\nabla\varphi \d x =\mbox{ constant }=C.$$
	We show that the constant $C$ is positive. For this, we choose the following test function:
	$$
	\varphi_\nu(x):= 
	\begin{cases}
	0 & v_c(x)\leq 0,\\
	v_c(x) & 0<v_c(x)<\nu,\\
	\nu & v_c(x)\geq \nu.
	\end{cases} 
$$
	Therefore, we have 
	$$C=\int_{E_\mathbb{A}(R)}B[v_c]\cdot\nabla\varphi_1 \d x =\int_{\{x\in E_\mathbb{A}(R):\, 0<v_c<1\}}|x|^\beta_{\mathbb{A}^{-1}}|\nabla v_c|^p_\mathbb{A}\d x>0. \qquad\qedhere$$
\end{proof}

\begin{lemma}\label{assylem_4}
	Assume that $p>d$, and $\mathbb{A}\in\mathbb{R}^{d\times d}$ is a symmetric, positive definite matrix. Let $v_c(x)$ be the solution as in Lemma~\ref{assylem_3}. Then there exists $\epsilon>0$ such that 
	\begin{equation}\label{estimate_1}
		v_c\asymp |x|_{\mathbb{A}^{-1}}^{(d-p)/(p-1)}\quad \mbox{ in } E_\mathbb{A}(\epsilon)\setminus\{0\}.
	\end{equation}
\end{lemma}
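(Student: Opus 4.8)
The plan is to combine the Harnack inequality for the weighted equation $-\mathrm{div}(B(v))=0$ near the origin with the known asymptotics of the fundamental-type solution $|x|_{\mathbb{A}^{-1}}^{(d-p)/(p-1)}$ of the same weighted operator, using a comparison argument in small ellipsoidal annuli. First I would invoke Lemma~\ref{assylem_1}: since $u$ is $(p,\mathbb{A})$-harmonic near infinity with $u(x)\to\infty$, the function $v:=K[u]$ solves $-\mathrm{div}(|x|^\beta_{\mathbb{A}^{-1}}|\nabla v|^{p-2}_\mathbb{A}\mathbb{A}\nabla v)=0$ in a punctured neighbourhood of $0$, and hence so does $v_c=v-c$; moreover $v_c>0$ near $0$. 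By the remark following Lemma~\ref{assylem_1}, $g(x):=|x|_{\mathbb{A}^{-1}}^{(d-p)/(p-1)}$ is a positive solution of the \emph{same} weighted equation in the punctured space, with $g(x)\to\infty$ as $x\to 0$ since $p>d$.

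Next I would establish the upper and lower bounds in \eqref{estimate_1} separately. For the upper bound, the key point is that $v_c$ has a non-removable singularity at $0$: if $v_c$ were bounded near $0$ it would extend to a solution across $0$, and then $K[u]$ would be bounded near $0$, forcing $u$ to have a finite limit at infinity, contradicting $u(x)\to\infty$. Thus $\limsup_{x\to 0}v_c(x)=\infty$. I would then apply a Serrin-type removable-singularity dichotomy for the weighted equation $-\mathrm{div}(B(w))=0$ (the weight $|x|^\beta_{\mathbb{A}^{-1}}$ is an $A_p$-Muckenhoupt weight, comparable to $|x|^\beta$, so the degenerate-elliptic theory of Heinonen--Kilpel\"ainen--Martio applies, as does the local Harnack inequality for this operator). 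That dichotomy gives: a positive solution near $0$ with non-removable singularity satisfies $w\asymp g$ near $0$. Applying this to $v_c$ yields \eqref{estimate_1} directly. Alternatively, and perhaps more in the spirit of the paper's self-contained style, I would run the comparison argument of Lemma~\ref{assylem_3}: from that lemma, $\int_{E_\mathbb{A}(R)}B[v_c]\cdot\nabla\varphi\,\mathrm{d}x=C>0$ for cutoffs $\varphi$ equal to $1$ near the origin, which says precisely that $v_c$ behaves like $C'\,\mu_{\mathrm{wt}}$ near $0$, where $\mu_{\mathrm{wt}}$ is the fundamental solution of $-\mathrm{div}(B(\cdot))$ with pole at $0$; a computation analogous to Lemma~\ref{lem-fs} identifies $\mu_{\mathrm{wt}}(x)\asymp |x|_{\mathbb{A}^{-1}}^{(d-p)/(p-1)}$ (indeed exactly $g$ up to a constant, by the remark after Lemma~\ref{assylem_1}).

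Concretely, to get $v_c\le C g$ near $0$ I would fix small $\epsilon_0$ and, for $\epsilon<\epsilon_0$, choose the constant large enough that $Cg\ge v_c$ on $\partial E_\mathbb{A}(\epsilon_0)$ (possible since $v_c$ is bounded there and $g$ is bounded below there); on $\partial E_\mathbb{A}(\epsilon)$ one has $g\to\infty$ while $v_c$ is controlled by Harnack on the annulus in terms of its sup on a fixed sphere times a bounded factor — more precisely, the integral identity from Lemma~\ref{assylem_3} pins down the leading coefficient, so $v_c/g\to C'$ along spheres. Then the weak comparison principle for $-\mathrm{div}(B(\cdot))$ on the ellipsoidal annulus $E_\mathbb{A}(\epsilon_0)\setminus \bar E_\mathbb{A}(\epsilon)$ gives $v_c\le Cg$ there, and letting $\epsilon\to 0$ gives it on the punctured ellipsoid. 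The lower bound $v_c\ge cg$ is obtained symmetrically, using that $v_c>0$ and that the integral identity also forces a matching lower leading order. The main obstacle I anticipate is making the degenerate-weight machinery (Harnack inequality, weak comparison principle, and the removable-singularity dichotomy) rigorous for the operator $w\mapsto \mathrm{div}(|x|^\beta_{\mathbb{A}^{-1}}|\nabla w|^{p-2}_\mathbb{A}\mathbb{A}\nabla w)$ with an $A_p$-weight that degenerates at the singular point itself; citing \cite{Heinonen} for the weighted theory and observing $|x|^\beta_{\mathbb{A}^{-1}}\in A_p$ (equivalently $\beta>-d$, which holds since $\beta=2(p-d)>0$) should suffice, with the Kelvin transform providing the non-removability input from the behaviour of $u$ at infinity.
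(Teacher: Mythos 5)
Your proposal correctly sets up the Kelvin transform, identifies $g(x)=|x|_{\mathbb{A}^{-1}}^{(d-p)/(p-1)}$ as a solution of the weighted equation $-\mathrm{div}(B(w))=0$, and observes that the non-vanishing constant from Lemma~\ref{assylem_3} is the key non-removability input. However, the heart of the matter — turning that integral identity into two-sided pointwise bounds — is precisely where the proposal has a gap. Your ``concrete'' comparison argument on the annulus $E_\mathbb{A}(\epsilon_0)\setminus\bar E_\mathbb{A}(\epsilon)$ needs the inequality $v_c\le Cg$ on the \emph{inner} boundary $\partial E_\mathbb{A}(\epsilon)$ before the weak comparison principle can be applied, but this inner-boundary bound is exactly (half of) the statement being proved; asserting ``the integral identity pins down the leading coefficient, so $v_c/g\to C'$ along spheres'' presupposes the conclusion. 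Likewise, a Harnack inequality on a single annulus near $0$ only compares $\sup$ and $\inf$ \emph{on that annulus}; it does not propagate control from a fixed sphere $\partial E_\mathbb{A}(\epsilon_0)$ down to $\partial E_\mathbb{A}(\epsilon)$ with the correct power, so ``$v_c$ is controlled by Harnack \dots times a bounded factor'' is not a valid step. Your alternative route via a removable-singularity dichotomy for $A_p$-weighted quasilinear equations would indeed short-circuit all of this, but you would need to cite a precise theorem giving the two-sided comparison of a non-removable positive singular solution with the weighted fundamental solution; the reference \cite{Heinonen} does not supply such a statement off the shelf, and the unweighted results \cite{serin_1,serin_2,veron} used elsewhere in the paper do not cover the degenerate weight $|x|_{\mathbb{A}^{-1}}^\beta$.

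What the paper actually does — and what is missing from your argument — is a weighted $p$-capacity estimate that converts the constant $C$ from Lemma~\ref{assylem_3} into the bounds $m_r\le C_1 r^\alpha$ and $r^\alpha\le C_2 M_r$ ($\alpha=(d-p)/(p-1)$), using two specific test functions: the truncation $\varphi_{m_r}$ of $v_c$ at level $m_r$ (which yields $Cm_r\gtrsim m_r^p\,\mathrm{cap}_{p,\beta}(B_{r,R})$, and the explicit weighted capacity $\mathrm{cap}_{p,\beta}(B_{r,R})\asymp (r^\alpha - R^\alpha)^{1-p}$ then gives $m_r\lesssim r^\alpha$), and the explicit radial profile $\psi_r$ of the weighted fundamental solution (which, via H\"older together with the energy bound $\int_{E_\mathbb{A}(R)\setminus E_\mathbb{A}(r)}|x|_{\mathbb{A}^{-1}}^\beta|\nabla v_c|_\mathbb{A}^p\,\mathrm{d}x\le CM_r$ coming from the test function $\varphi_{M_r}$, gives $r^\alpha\lesssim M_r$). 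The loop is then closed not by a Harnack inequality for the weighted operator, but by the \emph{uniform} Harnack inequality (Theorem~\ref{uniform_harnack}) applied to the $(p,\mathbb{A})$-harmonic function $u$ near infinity, which after the Kelvin transform yields $M_r\lesssim m_r$ for small $r$ (the additive constant $c$ being negligible since $m_r\to\infty$). Combining the three inequalities gives $m_r\asymp M_r\asymp r^\alpha$, i.e.\ \eqref{estimate_1}. Without this quantitative capacity step, your comparison argument cannot get started.
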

\begin{proof}
	For $0<r<R$, consider
	$$m_r=\underset{x\in \partial E_\mathbb{A}(r)}{\inf} v_c(x)\quad\mbox{ and }\quad M_r= \underset{x\in \partial E_\mathbb{A}(r)}{\sup} v_c(x).$$
	 Since $\displaystyle{\lim_{x\rightarrow 0}v_c(x)=\infty}$, Remark~\ref{mono_remrk} (with $v=1$) implies that the functions $m_r, M_r$ are non-decreasing when $r\rightarrow 0$. We show that there exists constants $C_1$ and $C_2$ such that 
	$$m_r\leq C_1r^{(d-p)/(p-1)}\leq C_2 M_r \quad \mbox{ for all } 0<r<r_0,$$
	for some $r_0>0$. Then by applying the uniform Harnack inequality to the $(p,\mathbb{A})$-harmonic function $u$, the claim of the lemma will follow.
	
		Let $\varphi_\nu$ as defined above, and note that $\varphi_\nu(x)=\nu$ near origin. Thus, by Lemma~\ref{assylem_3}, we have 
	\begin{align*}
	Cm_r\!=\!\!\int_{E_\mathbb{A}(R)}\!\!B[v_c]\cdot\nabla\varphi_{m_r} \!\d x\!=\!C_1'\!\int_{E_\mathbb{A}(R)} \!\!|x|_{\mathbb{A}^{-1}}^\beta|\nabla\varphi_{m_r}|_\mathbb{A}^{p}\!\dx 
	 \!\geq C_1''\frac{\lambda_{\min}^p}{\lambda_{\max}^\beta} m_r^p\, \mathrm{cap}_{p,\beta}(B_{r,R}),
	\end{align*}
where $\lambda_{\min}$ and $\lambda_{\max}$ denote the lowest and greatest eigenvalue of the matrix $\mathbb{A}$ and  	
$\mbox{cap}_{p,\beta}(B_{r,R})$ is the weighted $p$-capacity of the ball $B_r$ in $B_R$ with respect to the measure $|x|^\beta$. Then  by \cite[Emaple 2.2]{Heinonen}, it follows that 
\vspace{-1mm}
$$\mbox{cap}_{p,\beta}(B_{r,R})=C'(r^{(p-d-\beta)/(p-1)}-R^{(p-d-\beta)/(p-1)})^{1-p}.$$
Since $(p-d-\beta)/(p-1)=(d-p)/(p-1)$, we have
$$Cm_r^{1-p}\geq C_1''' (r^{(d-p)/(p-1)}-R^{(d-p)/(p-1)})^{1-p},$$
which implies that 
$$m_r\leq C_1\big(r^{(d-p)/(p-1)}-R^{(d-p)/(p-1)}\big)\leq C_1r^{(d-p)/(p-1)}.$$
Next we show $r^{(d-p)/(p-1)}\leq C_2 M_r$, for some $C_2>0$. Denote $\alpha=(d-p)/(p-1)$. For $0<r<R$, consider the following test function  
$$\psi_r(x):=\begin{cases}
1 & |x|_{\mathbb{A}^{-1}}<r,\\
\frac{|x|^\alpha_{\mathbb{A}^{-1}}-R^\alpha}{r^\alpha-R^\alpha} & 1\leq |x|_{\mathbb{A}^{-1}}\leq R,\\
0 & |x|_{\mathbb{A}^{-1}}>R.
\end{cases}$$
By Lemma \ref{assylem_3} and using the H\"{o}lder inequality, we have 
\begin{align}
C\!=\!\!\int_{E_\mathbb{A}(R)} \!\!\!\nabla\psi_r \cdot B[v_c] \!\dx 
\!\leq\!\! \left(\!\int_{E_\mathbb{A}(R)\setminus E_\mathbb{A}(r)}\hspace{-1cm}|x|^\beta_{\mathbb{A}^{-1}}|\nabla\psi_r|_\mathbb{A}^p\!\dx\!\right)^{\!\!1/p}
\! \!\left(\!\int_{E_\mathbb{A}(R)\setminus E_\mathbb{A}(r)}\hspace{-1cm}|x|^\beta_{\mathbb{A}^{-1}}|\nabla v_c|_\mathbb{A}^p\!\dx\!\right)^{\!\!(p-1)/p}\!.
\label{estimate_3}
\end{align}
Now,
\begin{align*}
\int_{E_\mathbb{A}(R)\setminus E_\mathbb{A}(r)}\hspace{-1cm}|x|^\beta_{\mathbb{A}^{-1}}|\nabla\psi_r|_\mathbb{A}^p\d x&=\frac{C'}{(r^\alpha-R^\alpha)^p}\big(r^{(\alpha-1)p+\beta+d}-R^{(\alpha-1)p+\beta+d}\big)
 = \frac{C'}{(r^\alpha-R^\alpha)^{p-1}}\, ,
\end{align*}
where we used $(\alpha-1)p+\beta+d=\alpha$. Thus, for small $r$, we get 
\begin{equation}
\int_{E_\mathbb{A}(R)\setminus E_\mathbb{A}(r)}\hspace{-1cm}|x|^\beta_{\mathbb{A}^{-1}}|\nabla\psi_r|_\mathbb{A}^p\d x\leq C' r^{-\alpha(p-1)}.\label{estimate_4}
\end{equation}
For the second term of \eqref{estimate_3}, we note that $v_c=\psi_{M_r}$ in $\{0\leq v_c\leq M_r\}$ which is a subset of $E_\mathbb{A}(R)\setminus E_\mathbb{A}(r)$. Thus we have
\begin{align}
\int_{E_\mathbb{A}(R)\setminus E_\mathbb{A}(r)}\hspace{-1cm}|x|^\beta_{\mathbb{A}^{-1}}|\nabla v_c|_\mathbb{A}^p\d x&\leq \int_{\{0\leq v_c\leq M_r\}}|x|^\beta_{\mathbb{A}^{-1}}|\nabla v_c|_\mathbb{A}^p\dx
\leq \int_{E_\mathbb{A}(R)}B[v_c]\cdot\nabla\psi_{M_r} \dx
=CM_r.\label{estimate_5}
\end{align}
Therefore, from \eqref{estimate_3}, \eqref{estimate_4} and \eqref{estimate_5}, we get
$$C_2'\leq r^{\alpha(1-p)/p}M_r^{(p-1)/p},$$
which shows that $r^{(d-p)/(p-1)} =r^\alpha\leq C_2M_r$.
\end{proof}
\begin{proof}[Proof of Theorem~\ref{assythm_2}]
	Let $p>d$. In light of  Corollary~\ref{assylem_2}, we need only to consider the case $u(x)\to \infty$ as $x\to\infty$. Since Lemma~\ref{assylem_4} implies that $v(x):=K[u](x) \asymp |x|^{(d-p)/(p-1)}_{\mathbb{A}^{-1}}$ near the origin, we need to  show that in fact, 
	 $v(x):=K[u](x) \sim |x|^{(d-p)/(p-1)}_{\mathbb{A}^{-1}}$ as $x\to 0$. Then in light of Lemma~\ref{assylem_1},   $u(x)\sim |x|^{(p-d)/(p-1)}_{\mathbb{A}^{-1}}$ as $x\rightarrow\infty$. 
	 
	 \medskip
	
	We follow the proof of \cite[Theorem~2.3]{frass_pinchover}. For $0<\sigma <1$, define $w_\sigma(x):=  v(\sigma x)/\sigma^\alpha$ where $\alpha=(d-p)/(p-1)$. Since $v_c\asymp |x|^\alpha_{\mathbb{A}^{-1}}$ in $E_\mathbb{A}(\epsilon)\setminus\{0\}$, it follows that $w_\sigma(x)\asymp|x|^\alpha_{\mathbb{A}^{-1}}$ in $E_\mathbb{A}(\epsilon/\sigma)\setminus\{0\}$ for some $\epsilon>0$ and also the family $\{w_\sigma\}_{0<\sigma<1}$ is locally bounded. By extracting a subsequence $\sigma_n\rightarrow 0$, we have that the sequence $\{w_{\sigma_n}\}$ converges locally uniformly to $w(x)$ in $\mathbb{R}^d\setminus\{0\}$. Moreover, $w$ is a positive solution of the equation 
	$$-\mathrm{div}\,(B(u))=0\quad \mbox{ in } \,\mathbb{R}^d\setminus\{0\}.$$
	Then by Remark~\ref{mono_remrk}, we have
	$$m:=\lim_{r\rightarrow 0}m_r=\lim_{r\rightarrow 0}\underset{x\in \partial E_\mathbb{A}(r)}{\inf}\frac{v(x)}{r^\alpha}, \quad M:=\lim_{r\rightarrow 0}M_r=\lim_{r\rightarrow 0}\underset{x\in \partial E_\mathbb{A}(r)}{\sup}\frac{v(x)}{r^\alpha}\, .$$
	This implies that $m|x|^\alpha_{\mathbb{A}^{-1}}\leq w(x)\leq M|x|^\alpha_{\mathbb{A}^{-1}}$. 
	Indeed,  for any $R>0$ we have 
	\begin{align*}
	\inf_{x\in \partial E_\mathbb{A}(R)} \frac{w(x)}{|x|^\alpha_{\mathbb{A}^{-1}}}&\!=\!\inf_{x\in \partial E_\mathbb{A}(R)}\lim_{n\rightarrow \infty} \frac{w_{\sigma_n}(x)}{|x|^\alpha_{\mathbb{A}^{-1}}}=\lim_{n\rightarrow \infty}\inf_{x\in \partial E_\mathbb{A}(R)} \frac{w_{\sigma_n}(x)}{|x|^\alpha_{\mathbb{A}^{-1}}}\\[1mm]
	&\!=\!\lim_{n\rightarrow \infty}\inf_{x\in \partial E_\mathbb{A}(R)} \frac{v(\sigma_n x)}{|\sigma_n x|^\alpha_{\mathbb{A}^{-1}}}
	\!=\!\lim_{n\rightarrow \infty}\inf_{x\in \partial E_\mathbb{A}(\sigma_n R)} \frac{v(x)}{|x|^\alpha_{\mathbb{A}^{-1}}}\!=\! \lim_{n\rightarrow \infty} m_{\sigma_n R}\!=\!m,
	\end{align*}
	where we used the local uniform convergence of $\{w_{\sigma_n}(x)/|x|^\alpha_{\mathbb{A}^{-1}}\}$. Similarly, we have$$\underset{x\in E_\mathbb{A}(R)}{\sup}\frac{w(x)}{|x|^\alpha_{\mathbb{A}^{-1}}}=M\qquad  \forall R>0.$$
	Hence,
	\vspace{-3mm}
	$$m|x|^\alpha_{\mathbb{A}^{-1}}\leq w(x)\leq M|x|^\alpha_{\mathbb{A}^{-1}}\qquad \forall R>0.$$
	Note that $|x|^\alpha_{\mathbb{A}^{-1}}$ is a positive solution of $-\mbox{ div }(B(u))=0$ in $\mathbb{R}^d\setminus\{0\}$ and the function $|x|^\alpha_{\mathbb{A}^{-1}}$ does not have any critical point. Hence, by the strong comparison principle (see, \cite[Theorem 3.2]{frass_pinchover}) which is valid also for the $(p,\mathbb{A})$-operator, we obtain $m|x|^\alpha_{\mathbb{A}^{-1}}=w(x)=M|x|^\alpha_{\mathbb{A}^{-1}}$, and hence, $m=M$.
\end{proof}
\begin{proof}[Proof of Theorem~\ref{regularpoint_lap}]
The proof follows directly from theorems~\ref{assythm_1} and \ref{assythm_2}. 		
\end{proof}
\section{Weak Fuchsian singularity and positive Liouville theorems}\label{sec_weak_Fuchs}
In this section we introduce the notion of weak Fuchsian singularity, and prove Conjecture~\ref{main_conj} for $Q$ which has  weak Fuchsian singularity at $\gz$ (see, Theorem~\ref{liouville_thm}).
\begin{defin}{\em 
	Let  $A$  and $V$ satisfy Assumptions~\ref{assump}.  Assume that $Q$  has an isolated  Fuchsian singularity $\zeta\in\partial \hat{\Omega}$, where $\zeta =0$ or $\zeta=\infty$. The operator $Q=Q_{p,A,V}$ is said to have a {\em  weak Fuchsian singularity} at $\zeta$ if  there exist $m$ sequences $\{R_n^{(j)}\}_{n=1}^\infty \subset\mathbb{R}_+$, $1\leq j\leq m$, satisfying $R_n^{(j)}\rightarrow \zeta^{j}$, where $\zeta^{(1)}=\zeta$, and $\zeta^{(j)}=0$ or $\zeta^{(j)}=\infty$ for $2\leq j\leq m$, such that 
	\begin{equation}\label{wfs}
	\mathcal{D}^{\{R_n^{(m)}\}}\circ\cdots\circ\mathcal{D}^{\{R_n^{(1)}\}}(Q)(w)=-\Delta_{p,\mathbb{A}} (w)\qquad \text{on }Y,
	\end{equation}
	where $\mathbb{A}\in \R^{d\times d}$ is a symmetric, positive definite matrix, and $Y=\underset{n\rightarrow \infty}{\lim} \Omega/R_n^{(1)}$.
}
\end{defin}
\begin{remark}{\em 
Example~2.1 in \cite{frass_pinchover} demonstrates that $m$ in \eqref{wfs} might be greater than 1. Moreover, although in this example $V\not \in M^q(p;B_1\setminus \{0\})$, the corresponding operator has a weak Fuchsian singularity at $\gz=0$.   
}
\end{remark}  
The next example shows that if $\gz=0$ and $V\in M^q(p;\Omega)$ for some punctured neighborhood $\Gw$ of the origin, and $A$ is continuous at $0$, then $Q$ has weak Fuchsian singularity at $\gz=0$.  
\begin{example}{\em
Assume that  $A\!\in \! L_{\rm loc}^{\infty}(\Gw;\R^{d\times d})$ is continuous at the isolated singular point $\gz=0$. Let $V\in M^q_{\text{loc}}(p;\Omega)$ has a Fuchsian singularity at $0\in \partial\hat{\Omega}$. Further suppose that $V\!\in\! M^q(p;B_1\cap\Omega)$. Then for any smooth function $\varphi$ having compact support in $B_r\!\setminus\! \{0\}$ we have
		\begin{align}
		\left|\int_{\Omega/R} R^pV(Rx)\varphi(x)\d x\right|
		\leq R^{p-d}\int_{\Omega}|V(x)||\varphi(x/R)|\d x
		\leq R^{p-d}\|\varphi\|_{\infty}\int_{\Omega\cap B_{Rr}}|V(x)|\d x\label{ex_1} .
		\end{align}
Take $R>0$ small enough such that $\Omega\cap B_{Rr}\subset \Omega\cap B_1$. Then for $p<d$, \eqref{ex_1} implies 
\begin{align*}
	\left|\int_{\Omega/R} R^pV(Rx)\varphi(x)\d x\right|&\leq R^{p-d}(Rr)^{d/q'}\|\varphi\|_{\infty}\frac{1}{(Rr)^{d/q'}}\int_{\Omega\cap B_{Rr}}|V(x)|\d x\\
	& \leq \|\varphi\|_{\infty} r^{d/q'}  \|V\|_{M^q(p;\Omega\cap B_1)}R^{p-d/q} \underset{R\rightarrow 0}{\longrightarrow} 0,
\end{align*} 
while for $p>d$
\vspace{-3mm} 
\begin{align*}
\left|\int_{\Omega/R} R^pV(Rx)\varphi(x)\d x\right|\leq \|\varphi\|_{\infty}  \|V\|_{L^1(\Omega\cap B_1)} R^{p-d} \underset{R\rightarrow 0}{\longrightarrow} 0.
\end{align*}
Similarly, for $p=d$ it can be seen that 
\begin{align*}
\left|\int_{\Omega/R} R^pV(Rx)\varphi(x)\d x\right|\leq \|\varphi\|_{\infty}  \|V\|_{M^q(d;\Omega\cap B_1)} \frac{1}{\log^{q/d'}(1/Rr)} \underset{R\rightarrow 0}{\longrightarrow} 0.
\end{align*}
Therefore, the operator $Q_{p,A,V}$ has a weak Fuchsian singularity at $0$.
}	
\end{example}
\begin{theo}[Liouville theorem]\label{liouville_thm}
	Let  $A$  and $V$ satisfy Assumptions~\ref{assump}.  Suppose that $\zeta\in\partial  \hat{\Omega}$ is an isolated singular point. Assume that the operator $Q=Q_{p,A,V}$ has a weak Fuchsian singularity at $\zeta$. Then $\zeta$ is a regular point of Equation~\eqref{p_laplace equ_2}. 
	
	In other words, if $u$ and $v$ are two positive solutions of the equation $Q_{p,A,V}(w)=0$ in a punctured neighborhood of $\zeta$, then 
	\begin{itemize}
		\item[(i)] 
		$\;\;\displaystyle{\lim_{x\rightarrow\zeta}\frac{u(x)}{v(x)}}$  exists in the wide sense.
		
		\item[(ii)] the equation $Q_{p,A,V}(w)=0$ admits a unique positive solution in $\Omega$ of minimal growth in a neighbourhood of $\partial\hat{\Omega}\setminus\{\zeta\}$.
	\end{itemize}
\end{theo}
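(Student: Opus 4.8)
The plan is to prove part~(i) --- that $\zeta$ is a regular point of Equation~\eqref{p_laplace equ_2} --- by induction on the number $m$ of dilation steps in the definition of a weak Fuchsian singularity, and then to deduce part~(ii) at once from Proposition~\ref{minimum_growth}. We use repeatedly that ``$Q$ has a weak Fuchsian singularity at $\zeta$'' presupposes that $Q$ has an isolated Fuchsian singularity at $\zeta$ and that every iterated limiting dilated operator occurring in \eqref{wfs} is well defined; the latter forces the uniform local Morrey bounds needed to invoke Propositions~\ref{covergence_principle} and~\ref{regularity_prop}, and the standing hypothesis $\mathcal{Q}\ge 0$ transfers to each dilated operator via the Allegretto--Piepenbrink-type theorem, since a locally uniform limit of dilated positive solutions of $Q$ near $\zeta$ is a positive solution of the dilated equation on $Y=(\R^d)^*$.

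\emph{Base case $m=1$.} Here $\mathcal{D}^{\{R_n^{(1)}\}}(Q)(w)=-\Delta_{p,\mathbb{A}}(w)$ on $Y=(\R^d)^*$. By Theorem~\ref{regularpoint_lap}, the point $0$ is a regular point of $-\Delta_{p,\mathbb{A}}$ when $p\le d$, and $\infty$ is a regular point when $p\ge d$; since $p\le d$ or $p\ge d$, at least one of $0,\infty$ is a regular point of the limiting dilated operator $\mathcal{D}^{\{R_n^{(1)}\}}(Q)$. As $Q$ has an isolated Fuchsian singularity at $\zeta$, Proposition~\ref{regularity_prop}, applied with the sequence $\{R_n^{(1)}\}\to\zeta$, gives that $\zeta$ is a regular point of $Q$.

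\emph{Inductive step.} Assume the assertion holds with $m-1$ in place of $m$, and let $\{R_n^{(1)}\},\dots,\{R_n^{(m)}\}$ realize the weak Fuchsian singularity of $Q$ at $\zeta$ as in \eqref{wfs}. Put $Q^{(1)}:=\mathcal{D}^{\{R_n^{(1)}\}}(Q)$; this is an operator of the form $Q_{p,\mathbb{A}_1,\mathbb{V}_1}$ on $Y=(\R^d)^*$ with $\mathbb{A}_1,\mathbb{V}_1$ satisfying Assumptions~\ref{assump}, and by Proposition~\ref{fuchsian_prop} (together with well-definedness of the composition in \eqref{wfs}) it has an isolated Fuchsian singularity at $\zeta^{(2)}\in\{0,\infty\}$. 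Since composing the remaining $m-1$ dilations with $\mathcal{D}^{\{R_n^{(1)}\}}(Q)$ reproduces the left-hand side of \eqref{wfs}, we get $\mathcal{D}^{\{R_n^{(m)}\}}\circ\cdots\circ\mathcal{D}^{\{R_n^{(2)}\}}(Q^{(1)})=-\Delta_{p,\mathbb{A}}$ on $Y$, so $Q^{(1)}$ has a weak Fuchsian singularity at $\zeta^{(2)}$ realized by the $m-1$ sequences $\{R_n^{(2)}\},\dots,\{R_n^{(m)}\}$. By the induction hypothesis $\zeta^{(2)}$ is a regular point of $Q^{(1)}=\mathcal{D}^{\{R_n^{(1)}\}}(Q)$, so at least one of $0,\infty$ is a regular point of this limiting dilated operator, and Proposition~\ref{regularity_prop} again yields that $\zeta$ is a regular point of $Q$. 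This proves~(i): for any $u,v\in\mathcal{G}_\zeta$ the limit $\lim_{x\to\zeta}u(x)/v(x)$ exists in the wide sense.

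Part~(ii) then follows immediately: $Q$ has an isolated Fuchsian singularity at $\zeta$ by hypothesis and $\zeta$ is a regular point by~(i), so Proposition~\ref{minimum_growth} provides a unique positive solution of \eqref{p_laplace equ_2} in $\Omega$ of minimal growth in a neighbourhood of $\partial\hat{\Omega}\setminus\{\zeta\}$. I expect the delicate point to be the inductive step, namely confirming that $Q^{(1)}$ inherits an \emph{isolated} Fuchsian singularity at $\zeta^{(2)}$ and not only at $\zeta^{(1)}=\zeta$ (which is what Proposition~\ref{fuchsian_prop} literally gives). This is seen by revisiting the scaling identity underlying that proposition: for a fixed scale $\rho>0$ the weighted Morrey norm of $V_{R_n^{(1)}}$ over $\mathcal{A}_\rho$ equals that of $V$ over $\mathcal{A}_{\rho R_n^{(1)}}$, and the latter annulus lies in a punctured neighbourhood of $\zeta$ once $n$ is large; hence the Fuchsian bound \eqref{fuchsian_eqn} for $V$ at $\zeta$ transfers to $\mathbb{V}_1$ on every annulus $\mathcal{A}_\rho$, $\rho>0$, yielding a Fuchsian bound for $Q^{(1)}$ at both $0$ and $\infty$. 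The remaining ingredients are routine once the base case is combined with Theorem~\ref{regularpoint_lap}.
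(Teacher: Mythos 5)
Your proof is correct and takes essentially the same approach as the paper: the induction on $m$ that you carry out is precisely the ``reverse induction argument'' the paper invokes together with Proposition~\ref{regularity_prop} and Theorem~\ref{regularpoint_lap} (base case), and part~(ii) follows from Proposition~\ref{minimum_growth} exactly as you say. The only added value is that you make explicit the verification (which the paper leaves implicit, referring to Proposition~\ref{fuchsian_prop}) that the intermediate dilated operator $Q^{(1)}$ inherits a Fuchsian singularity at $\zeta^{(2)}\in\{0,\infty\}$ and not merely at $\zeta$, via the observation that the Fuchsian bound for $V$ near $\zeta$ yields a uniform bound for $\mathbb{V}_1$ on every annulus $\mathcal{A}_\rho$, $\rho>0$; that check is correct and is needed for the iterated application of Proposition~\ref{regularity_prop}.
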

\begin{proof}
	By Proposition~\ref{minimum_growth}, we have (i) $\Rightarrow$ (ii). Thus, we only need to show that $\underset{x\rightarrow\zeta}{\lim}\,\frac{u(x)}{v(x)}$ exists in the wide sense. Since the operator $Q$ has a weak Fuchsian singularity at $\zeta$, we have  
	\begin{equation}\label{lveqn1}
	\mathcal{D}^{\{R_n^{(m)}\}}\circ\cdots\circ\mathcal{D}^{\{R_n^{(1)}\}}(Q)(w)=-\Delta_{p,\mathbb{A}} (w)=0\qquad \mbox{in }\,\mathbb{R}^d\setminus\{0\},
	\end{equation}
	where $\mathbb{A}\in \R^{d\times d}$ is a symmetric, positive definite matrix. Recall that by Theorem~\ref{regularpoint_lap}  either $0$ or $\infty$ is a regular point of $-\Delta_{p,\mathbb{A}}$. Therefore, Proposition~\ref{regularity_prop} and a reverse induction argument implies that $\zeta$ is a regular point of the equation $Q_{p,A,V}(w)=0$.
\end{proof}
\section{Positive Liouville theorem in the  elliptically symmetric case}\label{sec_spher}
This section is devoted to the proof of Conjecture~\ref{main_conj} in the elliptically symmetric case. 
\begin{definition}
{\em Let $\mathbb{A}\in \mathbb{R}^{d\times d}$ be a symmetric, positive definite matrix. 
We say that $f:\Gw \to \R$ is {\em elliptically symmetric with respect to $\mathbb{A}$} if 
$f(x)=\tilde f(|x|_{\mathbb{A}^{-1}})$ for all $x \in \Gw$, where $\tilde f:\R_+ \to \R$. In the sequel, with some abuse of notation, we omit the distinction between $f$ and $\tilde f$.}
\end{definition}
{\bf Throughout the present section we fix  $\mathbb{A}\in \mathbb{R}^{d\times d}$ and assume that the potential $V\in M^q_{\text{loc}}(p;\Omega)$ is elliptically symmetric with respect to $\mathbb{A}$ i.e., $V(x)=V(|x|_{\mathbb{A}^{-1}})$}. 

Denote $r=|x|_{\mathbb{A}^{-1}}$, and let us calculate
$\Delta_{p,\mathbb{A}}(f(r))=\div(|\nabla f(r)|_\mathbb{A}^{p-2}\mathbb{A}\nabla f(r))$.

Using \eqref{grad_qf}, we obtain
$$\nabla f(r)= f'(r)\frac{\mathbb{A}^{-1}x}{r}  \quad \mbox{ and } |\nabla f(r)|_\mathbb{A}= \frac{|f'(r)|}{r}|\mathbb{A}^{-1}x|_\mathbb{A}=|f'(r)|.$$
Consequently, 
$$\eta :=|\nabla f(r)|^{p-2}_\mathbb{A}\mathbb{A}\nabla f(r)=|f'(r)|^{p-2}f'(r)\frac{x}{r}, \quad \mbox{and }$$
\begin{align*}
\frac{\partial \eta_i}{\partial x_i}= 
\frac{|f'(r)|^{p-2}f'(r)}{r}+ \frac{x_i (\mathbb{A}^{-1}x)_i}{r}\bigg[-\frac{|f'(r)|^{p-2}f'(r)}{r^2}+(p-1)\frac{|f'(r)|^{p-2}f''(r)}{r}\bigg].
\end{align*}
Therefore, we get
\begin{equation}\label{eq_rad_laplac}
\Delta_{p,\mathbb{A}}(f(r)) = \sum_{i=1}^{d}\frac{\partial \eta_i}{\partial x_i}
=|f'(r)|^{p-2}\bigg[(p-1)f''(r)+\frac{d-1}{r}f'(r)\bigg], \quad \mbox{where } r=|x|_{\mathbb{A}^{-1}}.
\end{equation}
\begin{lem}\label{symlem1}
	Let $\mathbb{A}\in \mathbb{R}^{d\times d}$ be a symmetric, positive definite matrix. Assume that the domain $\Omega$ and the potential $V$ are elliptically symmetric with respect to $\mathbb{A}$ and the equation $Q_{p,\mathbb{A},V}(u)=0$ possess a positive solution. Further, suppose that the operator $Q_{p,\mathbb{A},V}$ has a Fuchsian isolated singularity at $\zeta\in\{0,\infty\}$. Then for any $u\in \mathcal{G}_\zeta$, there exists an elliptically symmetric (with respect to $\mathbb{A}$) solution  $\tilde{u}\in \mathcal{G}_\zeta$ such that $u\asymp\tilde{u}$.
\end{lem}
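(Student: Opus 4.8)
The plan is to obtain $\tilde u$ as a locally uniform limit of solutions of Dirichlet problems on elliptically symmetric annular domains, exploiting the invariance of $Q_{p,\mathbb A,V}$ under the compact group $G:=\{\mathbb A^{1/2}O\mathbb A^{-1/2}:O\in O(d)\}$. This $G$ preserves every ellipsoid $\partial E_{\mathbb A}(r)$, acts transitively on it, and leaves the equation invariant: since $V$ is elliptically symmetric with respect to $\mathbb A$, one checks directly (or after the linear change of variables $x=\mathbb A^{1/2}y$, which turns $\Delta_{p,\mathbb A}$ into the ordinary $p$-Laplacian and $|x|_{\mathbb A^{-1}}$ into $|y|$, so that $V$ becomes radial) that $Q_{p,\mathbb A,V}(w\circ T)=Q_{p,\mathbb A,V}(w)\circ T$ for every $T\in G$. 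I treat $\zeta=0$; the case $\zeta=\infty$ is analogous (or reduces to $\zeta=0$ via the generalized Kelvin transform of Lemma~\ref{assylem_1}).

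First, fix $\rho_0>0$ small enough that $\overline{E_{\mathbb A}(\rho_0)}\setminus\{0\}$ lies in $\Omega$ and in the punctured neighbourhood where $u$ is a positive solution; for $0<r<\rho_0$ put $m_r:=\inf_{\partial E_{\mathbb A}(r)}u$ and $M_r:=\sup_{\partial E_{\mathbb A}(r)}u$. Running the rescaling step in the proof of Theorem~\ref{uniform_harnack} for the single function $u$ — that is, applying the local Harnack inequality (Theorem~\ref{harnack_ineq}) to $u_r(x):=u(rx)$ on a fixed annular neighbourhood $\tilde{\mathcal A}$ of $\partial E_{\mathbb A}(1)$, whose scaled coefficients $A(r\,\cdot\,)$ and $r^pV(r\,\cdot\,)$ are uniformly bounded and uniformly elliptic/Morrey-bounded because $Q$ is Fuchsian at $0$ — yields a constant $C_0\ge1$, independent of $r$ near $0$, with $M_r\le C_0 m_r$.

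Next, for $0<\rho<\rho_0$ set $\Sigma_\rho:=E_{\mathbb A}(\rho_0)\setminus\overline{E_{\mathbb A}(\rho)}$ and let $w_\rho$ be the unique positive solution of $Q_{p,\mathbb A,V}(w)=0$ in $\Sigma_\rho$ with $w=M_{\rho_0}$ on $\partial E_{\mathbb A}(\rho_0)$ and $w=M_\rho$ on $\partial E_{\mathbb A}(\rho)$, provided by Lemma~\ref{lem_Dirichlet} (the boundary datum is continuous, positive and $G$-invariant). Since $\Sigma_\rho$, the coefficients, and the boundary data are all $G$-invariant, uniqueness forces $w_\rho\circ T=w_\rho$ for all $T\in G$, so $w_\rho$ is elliptically symmetric. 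Comparing on the bounded Lipschitz domain $\Sigma_\rho$: $w_\rho>0$ on $\partial\Sigma_\rho$ and $u\le w_\rho$ on $\partial\Sigma_\rho$ by construction, so the weak comparison principle (Theorem~\ref{weak_comparison}) gives $u\le w_\rho$ in $\Sigma_\rho$; conversely $C_0u$ solves the equation (by $1$-homogeneity of $Q$), is positive on $\partial\Sigma_\rho$, and dominates $w_\rho$ there by the choice of $C_0$ (on the outer ellipsoid $C_0u\ge C_0m_{\rho_0}\ge M_{\rho_0}$, on the inner one $C_0u\ge C_0m_\rho\ge M_\rho$), so Theorem~\ref{weak_comparison} gives $w_\rho\le C_0u$ in $\Sigma_\rho$. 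Thus $u\le w_\rho\le C_0u$ in $E_{\mathbb A}(\rho_0)\setminus\overline{E_{\mathbb A}(\rho)}$ for every $\rho\in(0,\rho_0)$. On any compact $K\Subset E_{\mathbb A}(\rho_0)\setminus\{0\}$ the $w_\rho$ (for $\rho$ small) are therefore squeezed between $\min_K u$ and $C_0\max_K u$ and, being positive solutions of a fixed equation, are locally bounded in $W^{1,p}$ and in $C^\alpha$ by the interior estimates underlying Proposition~\ref{covergence_principle}; a diagonal extraction along $\rho_n\downarrow0$ then produces $\tilde u$, attained locally uniformly in $E_{\mathbb A}(\rho_0)\setminus\{0\}$, which is a positive weak solution there, is elliptically symmetric (a locally uniform limit of such functions), and satisfies $u\le\tilde u\le C_0u$. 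Hence $\tilde u\in\mathcal G_\zeta$ and $\tilde u\asymp u$, as required.

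The main obstacle is the construction in the third step: one must identify the correct symmetry group — the ellipsoid-preserving conjugate $\mathbb A^{1/2}O(d)\mathbb A^{-1/2}$ of the rotation group, under which $\Delta_{p,\mathbb A}$ (and not the naive $|\cdot|_{\mathbb A^{-1}}$-isometry group) is invariant — and then combine uniqueness of the Dirichlet problem with the weak comparison principle and the homogeneity of $Q$ so as to trap $w_\rho$ between $u$ and $C_0u$ with a constant that does not degenerate as $\rho\to0$; that uniformity is exactly what the uniform Harnack inequality near the Fuchsian point supplies. Once these two-sided bounds hold, the passage to the limit is a routine compactness argument of the kind already used for the Harnack convergence principle.
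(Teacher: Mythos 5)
Your proof is correct and follows essentially the same route as the paper: solve a Dirichlet problem on elliptically symmetric annuli with constant boundary data (you use suprema $M_\rho$, $M_{\rho_0}$, the paper uses infima $m_\rho$, $m_R$), deduce elliptical symmetry of the Dirichlet solution from uniqueness (the paper cites the unique solvability of the corresponding one-dimensional problem, you phrase it via invariance under the compact group $\mathbb{A}^{1/2}O(d)\mathbb{A}^{-1/2}$ — a cosmetic difference), trap the solution between $u$ and $C_0u$ by the rescaled uniform Harnack estimate plus the weak comparison principle, and pass to the limit via the Harnack convergence principle. One small caution: the parenthetical suggestion of reducing $\zeta=\infty$ to $\zeta=0$ via the generalized Kelvin transform of Lemma~\ref{assylem_1} does not apply for a general nonzero $V$, since that lemma only treats $(p,\mathbb{A})$-harmonic functions (and even then introduces a weight $|x|^{\beta}_{\mathbb{A}^{-1}}$); the direct analogous argument you also offer, and which the paper uses, is the one that works.
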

\begin{proof}
	 We consider the case $\zeta=0$, the case when $\zeta=\infty$, can be shown similarly. Fix  $R>0$ such that $u$ is defined in the punctured ellipsoid $E_{\mathbb{A}}(2R)\setminus\{0\}$. Then for $0<\gr< R$, consider the following Dirichlet problem 
	\begin{equation}\label{symeq1}
	\begin{cases}
	Q_{p,\mathbb{A},V}(w)=0 &\mbox{ in } E_{\mathbb{A}}(R)\setminus \bar{E}_{\mathbb{A}}(\gr),\\
	w(x)=m_R & x \in \partial E_{\mathbb{A}}(R)\\
	w(x)=m_\gr & x\in \partial E_{\mathbb{A}}(\gr),
	\end{cases}
	\end{equation}
	where $m_r=\inf_{x\in \partial E_{\mathbb{A}}}(r) u(x)$. By Lemma~\ref{lem_Dirichlet}, there exists a unique solution $u_{\gr,R}$ to the Dirichlet problem \eqref{symeq1}. Moreover, from the unique solvability of the one-dimensional Dirichlet problem it follows that  $u_{\gr,R}$ is elliptically symmetric with respect to $\mathbb{A}$. Moreover, by the uniform Harnack inequality (Theorem~\ref{uniform_harnack}) and the WCP we have 
	$$u_{\gr,R}\leq u\leq Cu_{\gr,R} \quad  \mbox{in } E_{\mathbb{A}}(R)\setminus E_{\mathbb{A}}(\gr),$$  where  $C>0$ is independent of $\gr$.  
	
	 Applying the Harnack converging principle,  it follows that there exists a sequence $\gr_n\rightarrow 0$ such that $u_{\gr_n}\rightarrow \tilde{u}$ locally uniformly in $E_{\mathbb{A}}(R)\setminus\{0\}$, where $\tilde{u}$ is an elliptically symmetric positive solution of the equation $Q_{p,\mathbb{A},V}(w)=0$ in $E_{\mathbb{A}}(R)\setminus\{0\}$.
\end{proof}
\begin{theorem}
	Let $\mathbb{A}\in \mathbb{R}^{d\times d}$ be a symmetric, positive definite matrix. Assume that the domain $\Omega$ and the potential $V$ are elliptically symmetric with respect to $\mathbb{A}$ and the corresponding equation \eqref{p_laplace equ_2} possess a positive solution. Further, suppose that the operator $Q_{p,\mathbb{A},V}$ has a Fuchsian isolated singularity at $\zeta\in\{0,\infty\}$. Then 
\begin{itemize}
	\item[(i)] $\zeta$ is a regular point of \eqref{p_laplace equ_2}.
	\item[(ii)] the equation $Q_{p,\mathbb{A},V}(w)=0$ possess a unique positive solution in $\Omega$ of minimal growth in a neighbourhood of $\partial\hat{\Omega}\setminus\{\zeta\}$.
\end{itemize}	
\end{theorem}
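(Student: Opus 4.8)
The plan is to obtain part (ii) from part (i) via Proposition~\ref{minimum_growth}, so the real task is to prove (i): for any $u,v\in\mathcal{G}_\zeta$, $\lim_{x\to\zeta}u(x)/v(x)$ exists in the wide sense. I would argue through a chain of reductions ending in a blow-up to $(\mathbb{R}^d)^*$. As a preliminary reduction, put $m_r:=\inf_{S_r\cap\Omega'}u/v$ and $M_r:=\sup_{S_r\cap\Omega'}u/v$; Lemma~\ref{lemma_1}(i) shows $m:=\lim_{r\to\zeta}m_r$ and $M:=\lim_{r\to\zeta}M_r$ exist in $[0,\infty]$, while the uniform Harnack inequality (Theorem~\ref{uniform_harnack}) gives $M_r\le C m_r$ near $\zeta$, hence $M\le Cm$. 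Thus if $m=0$ or $M=\infty$ the limit is $0$ or $\infty$ and we are done; it remains to treat the case $u\asymp v$ near $\zeta$ and to show that then $m=M\in(0,\infty)$.

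Next I would invoke Lemma~\ref{symlem1}: applied to $u$ and to $v$ it produces elliptically symmetric solutions $\tilde u,\tilde v\in\mathcal{G}_\zeta$ with $u\asymp\tilde u$ and $v\asymp\tilde v$, hence $\tilde u\asymp\tilde v$. Since $\tilde u/\tilde v$ is constant on each ellipsoid $\partial E_{\mathbb{A}}(r)$, Lemma~\ref{lemma_1}(i) together with Remark~\ref{mono_remrk} shows that $r\mapsto\tilde u(r)/\tilde v(r)$ is eventually monotone; being trapped between two positive constants it has a finite positive limit $\ell$, i.e. $\tilde u\sim\ell\,\tilde v$ at $\zeta$. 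Writing $u/v=(u/\tilde u)\,(\tilde u/\tilde v)\,(\tilde v/v)$, the theorem is reduced to the claim: if $w\in\mathcal{G}_\zeta$ and $\psi\in\mathcal{G}_\zeta$ is elliptically symmetric with $w\asymp\psi$ near $\zeta$, then $w/\psi$ has a finite positive limit at $\zeta$.

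To prove this claim I would use the limiting dilation process. Pick $R_n\to\zeta$; the dilated matrices $A_{R_n}$ are all equal to $\mathbb{A}$, and by the Fuchsian bounds \eqref{eq_AR} and \eqref{eq_AR1} the scaled potentials $V_{R_n}$ are bounded in $M^q_{\loc}(p;(\mathbb{R}^d)^*)$, so, passing to a subsequence, $V_{R_n}\rightharpoonup\mathbb{V}$ there, with $\mathbb{V}$ again elliptically symmetric; the limiting dilated equation on $Y=(\mathbb{R}^d)^*$ is $-\Delta_{p,\mathbb{A}}(\varphi)+\mathbb{V}|\varphi|^{p-2}\varphi=0$. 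Normalising $w_n(x):=w(R_nx)/w(R_nx_0)$ and $\psi_n(x):=\psi(R_nx)/w(R_nx_0)$, the Harnack convergence principle (Proposition~\ref{covergence_principle}) yields, along a further subsequence, $w_n\to w_\infty$ and $\psi_n\to\psi_\infty$ locally uniformly in $Y$, both positive solutions of the limiting equation, with $\psi_\infty$ elliptically symmetric and $w_\infty\asymp\psi_\infty$ on $Y$. Arguing as in the proof of Proposition~\ref{regularity_prop} (with ellipsoids in place of spheres, via Remark~\ref{mono_remrk}), for every $R>0$ one gets $\sup_{\partial E_{\mathbb{A}}(R)}w_\infty/\psi_\infty=M'$ and $\inf_{\partial E_{\mathbb{A}}(R)}w_\infty/\psi_\infty=m'$, where $M':=\lim_{r\to\zeta}\sup_{\partial E_{\mathbb{A}}(r)}w/\psi$ and $m':=\lim_{r\to\zeta}\inf_{\partial E_{\mathbb{A}}(r)}w/\psi$ both lie in $(0,\infty)$. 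In particular $w_\infty\le M'\psi_\infty$ on all of $Y$, with equality attained on each compact ellipsoid $\partial E_{\mathbb{A}}(R)$. If $\psi_\infty$ is non-constant then, as a radial solution of a second-order equation, $\psi_\infty'$ vanishes only on a discrete set, so we may choose $R$ with $\nabla\psi_\infty\neq 0$ on $\partial E_{\mathbb{A}}(R)$, and the strong comparison principle (\cite[Theorem~3.2]{frass_pinchover}, valid for the $(p,\mathbb{A})$-operator as recalled above) forces $w_\infty\equiv M'\psi_\infty$ on $Y$, hence $m'=M'$; if $\psi_\infty$ is constant then $\mathbb{V}\equiv 0$, $w_\infty$ is $(p,\mathbb{A})$-harmonic and attains its maximum at an interior point of $Y$, so the strong maximum principle again gives $w_\infty\equiv M'\psi_\infty$ (alternatively one invokes Theorem~\ref{regularpoint_lap}). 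Either way $w/\psi\to m'=M'\in(0,\infty)$, which proves the claim and hence (i); then (ii) follows from Proposition~\ref{minimum_growth}.

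The crux, and the step I expect to be most delicate, is this last one. Unlike in the weak Fuchsian situation of Section~\ref{sec_weak_Fuchs}, the limiting dilated potential $\mathbb{V}$ need not vanish — for a Hardy-type potential it is even dilation-invariant — so one cannot iterate the dilation down to the plain $(p,\mathbb{A})$-Laplacian; instead the strong comparison principle must be run directly on $(\mathbb{R}^d)^*$, which requires controlling the critical set of the radial limit $\psi_\infty$, together with the (standard but technical) weak sequential compactness of bounded families in the local Morrey space used to extract $\mathbb{V}$.
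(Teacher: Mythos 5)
Your proposal follows essentially the same route as the paper: reduce (ii) to (i) via Proposition~\ref{minimum_growth}; dispatch the cases $m=0$ or $M=\infty$ via Lemma~\ref{lemma_1} and the uniform Harnack inequality; use Lemma~\ref{symlem1} to replace solutions by comparable elliptically symmetric ones; pass to a limiting dilated equation on $(\R^d)^*$ with a radial $\mathbb{V}$ and a radial blown-up solution; and conclude by the strong comparison principle on an annulus avoiding the critical set. Your reduction is slightly more symmetric (replacing \emph{both} $u$ and $v$ by radial comparisons $\tilde u,\tilde v$ and factoring $u/v$ into three ratios), whereas the paper first treats the case where one of the two is elliptically symmetric and then bridges the general case through a single $\tilde u$; both organisations are fine.

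There is, however, one unjustified step. You assert that, being a radial solution of a second-order equation, $\psi_\infty'$ ``vanishes only on a discrete set.'' This is not true in general: the ODE for $\psi_\infty$ is degenerate where $\psi_\infty'=0$, and if $\mathbb{V}$ vanishes on an open interval of radii (which is certainly allowed -- $\mathbb{V}\in M^q_{\loc}(p;(\R^d)^*)$ is merely a weak limit of the scaled potentials), then $\psi_\infty$ can be locally constant there, so the zero set of $\psi_\infty'$ can have nonempty interior. What is actually needed, and what the paper proves, is weaker: the critical set $S_{\psi_\infty}$ is closed and elliptically symmetric, so either it contains a punctured neighbourhood of $\zeta$ -- in which case $\psi_\infty$ is constant there, forcing $\mathbb{V}\equiv 0$ near $\zeta$, the case you handle separately -- or else some annulus $E_{\mathbb{A}}(R)\setminus E_{\mathbb{A}}(r)$ is disjoint from $S_{\psi_\infty}$, and the strong comparison principle (together with $\sup_{\partial E_{\mathbb{A}}(\rho)} w_\infty/\psi_\infty = M'$, $\inf_{\partial E_{\mathbb{A}}(\rho)} w_\infty/\psi_\infty = m'$ for every $\rho$) yields $m'=M'$. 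Note also that the strong comparison principle requires $\nabla\psi_\infty\neq 0$ on an open annulus, not merely on a single shell $\partial E_{\mathbb{A}}(R)$ as your wording suggests. With these corrections your argument coincides with the paper's.
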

\begin{proof}
	(i)  Assume first that $u,v\in\mathcal{G}_\zeta$, where $u$ is elliptically symmetric with respect to $\mathbb{A}$. Since the operator $Q_{p,\mathbb{A},V}$ has a Fuchsian isolated singularity at $\zeta$, hence Lemma~\ref{lemma_1}, Proposition~\ref{regularity_prop}, and the uniform Harnack inequality Theorem~\ref{uniform_harnack}, imply that either  
	$$\underset{\substack{x\rightarrow\zeta\\x\in\Omega'}}{\lim}\frac{u(x)}{v(x)}\quad \text{exists, and equal either to 0 or } \infty,$$
	or else, 
	$u\asymp v$ in some punctured neighbourhood $\Omega'\subset \Omega$ of $\zeta$. For a sequence $\{R_n\}$ which converges to $\zeta$, define $u_n(x)$ and $v_n(x)$ as in the proof of Proposition~\ref{regularity_prop} (see, \eqref{eqn_2}). Then, $u_n$ and $v_n$ are positive solutions of \eqref{eqn_3}. Following the arguments as in  Proposition~\ref{regularity_prop}, it follows that up to a subsequence 
	$$\lim_{n\rightarrow \infty} u_n(x)= u_\infty(x), ~\lim_{n\rightarrow \infty} v_n(x)= v_\infty(x),$$
	locally uniformly in $\mathbb{R}^d\setminus\{0\}$, and $u_\infty$, $v_\infty$ are positive solutions of the limiting dilated equation
	$$-\Delta_{p,\mathbb{A}}(w)+\mathbb{V}|w|^{p-2}w=0\,\,\text{in}\,\mathbb{R}^d\setminus\{0\}.$$
	Note that the potential $\mathbb{V}$ and the solution $u_\infty$ are elliptically symmetric with respect to $\mathbb{A}$. Moreover, as in Proposition~\ref{regularity_prop}, for any fixed $R>0$, we have 
	$$\underset{x\in S_R}{\sup} \frac{u_\infty(x)}{v_\infty(x)}=M,\,\,\underset{x\in S_R}{\inf} \frac{u_\infty(x)}{v_\infty(x)}=m,$$
	where $M=\lim_{r\rightarrow\zeta} M_r$ and $m=\lim_{r\rightarrow\zeta} m_r$ and $m_r$, $M_r$  are defined as in Lemma~\ref{lemma_1}. Assume that the potential $\mathbb{V}$ is nonzero, otherwise it has a weak Fuchsian singularity at $\zeta$ and the theorem follows from Theorem~\ref{liouville_thm}.

	Let $S_{u_\infty}$ be the set of critical points of $u_\infty$. Then it is closed and elliptically symmetric. Now if $\zeta$ is an interior point of $\hat{S}_{u_\infty}$ then $|\nabla u_\infty|=0$ in some punctured neighbourhood $\Omega'$ of $\zeta$. This implies that $u_\infty$ is constant near $\zeta$ which contradicts our assumption  that $\mathbb{V}\neq 0$ near $\zeta$. Therefore, there exists an annular set $\mathcal{\tilde{A}}= E_{\mathbb{A}}(R)\setminus E_{\mathbb{A}}(r)$ close to $\zeta$ such that $S_{u_\infty}\cap \mathcal{\tilde{A}}=\emptyset$. Hence by the strong comparison principle (see \cite[Theorem 2]{frass_pinchover}), which is also valid for $Q_{p,\mathbb{A},V}$, we obtain $mv_\infty=u_\infty=Mv_\infty$ in $\mathcal{\tilde{A}}$. So, $m=M$, and the theorem follows.
	
	Assume now that $u,v\in \mathcal{G}_\zeta$. Then by Lemma~\ref{symlem1}, there exists a elliptically symmetric solution $\tilde{u}\in \mathcal{G}_\zeta$ such that $u\asymp \tilde{u}$. By the proof before if follows that $u \sim \tilde{u}$ and the limit 
		$$\underset{\substack{x\rightarrow\zeta\\x\in\Omega'}}{\lim}\frac{v(x)}{\tilde{u}(x)}\quad \text{exists in the wide sense,}\quad  \mbox{and } \underset{\substack{x\rightarrow\zeta\\x\in\Omega'}}{\lim}\frac{u(x)}{\tilde{u}(x)} =C>0,$$
		which shows that 
		\vspace{-3mm}
		$$\underset{\substack{x\rightarrow\zeta\\x\in\Omega'}}{\lim}\frac{u(x)}{v(x)}\quad \text{exists in the wide sense.}$$
	(ii) Follows from Proposition~\ref{minimum_growth}.
	\end{proof}
\begin{center}
	{\bf Acknowledgements}
\end{center}
{\small The  authors  acknowledge  the  support  of  the  Israel  Science Foundation (grant  637/19) founded by the Israel Academy of Sciences and Humanities.}

\end{document}